\newtheorem{thm}{Theorem}[section]
\newtheorem{lemma}[thm]{Lemma}
\newtheorem{cor}[thm]{Corollary}
\theoremstyle{definition}
\def\XXint#1#2#3{{\setbox0=\hbox{$#1{#2#3}{\int}$}
         \vcenter{\hbox{$#2#3$}}\kern-.5\wd0}}
\def\R{\mathbb{R}}
\def\e{\varepsilon}
\def\loc{\text{loc}}
\numberwithin{equation}{section}
\begin{document}

\title{Darcy's Law for Porous Media \\ with Multiple Microstructures}

\author{
Zhongwei Shen\thanks{Supported in part by NSF grant DMS-2153585.}}
\date{}

\maketitle

\begin{abstract}

In this paper we study the homogenization of the Dirichlet problem for the Stokes equations
in a  perforated domain with multiple microstructures. First, under the assumption that the interface between subdomains
is a union of Lipschitz surfaces, 
we show that the effective velocity and pressure are governed by a Darcy law, where the permeability
matrix is piecewise constant.
The key step is to prove that the effective pressure is continuous across the interface, using Tartar's method of test functions.
Secondly, we establish the sharp error estimates for the convergence of the velocity and pressure, assuming the interface satisfies
certain smoothness and geometric conditions.
This is achieved by constructing two correctors.
One of them is used to correct the discontinuity of the two-scale approximation on the interface,  while
the other is used to correct the discrepancy between  boundary values of the solution and its approximation.

\medskip

\noindent{\it Keywords}: Homogenization; Stokes Equations; Perforated Domain; Convergence Rate.

\medskip

\noindent {\it MR (2020) Subject Classification}: 35Q35; 35B27; 76D07.

\end{abstract}

\section{Introduction}

In this paper we study the homogenization of the Dirichlet problem 
for the Stokes equations in a perforated domain $\Omega_\e$,
  \begin{equation}\label{Stokes-1}
  \left\{
  \aligned
  -\e^2 \mu  \Delta u_\e +\nabla p_\e & = f & \quad & \text{ in } \Omega_\e,\\
  \text{\rm div} (u_\e) & =0 & \quad & \text{ in } \Omega_\e,\\
  u_\e & = 0 & \quad & \text{ on }  \partial \Omega_\e,
  \endaligned
  \right.
    \end{equation}
    where $0< \e< 1$ and $\mu>0$ is a constant.
  Let $\Omega$ be a bounded Lipschitz 
  domain in $\R^d$, $d\ge 2$.
   Let  $\{ \Omega^\ell: 1\le \ell\le L\}$  be a finite number of disjoint subdomains of $\Omega$, each with a
  Lipschitz boundary, such that 
  \begin{equation}\label{domain-1}
  \overline{\Omega} =  \bigcup_{\ell =1}^L \overline{\Omega^\ell }.
  \end{equation}
  To describe the porous domain $\Omega_\e$,  let $Y=[0, 1]^d$ be a closed unit cube and $\{ Y_s^\ell: 
  1\le \ell  \le L \}$
    open subsets (solid parts) of $Y$ with Lipschitz boundaries.
  Assume that  for $1\le \ell \le L$, 
  dist$ (\partial Y, \partial Y_s^\ell )>0$ and 
  $Y^\ell_f=Y \setminus \overline{Y_s^\ell}$ (the fluid part) is connected. 
  For $ 0<  \e <1$ and $1\le \ell \le L$, define
  \begin{equation}\label{subdomain}
  \Omega_\e^\ell
  =\Omega^\ell \setminus \bigcup_z \e (\overline{Y_s^\ell} +z),
  \end{equation}
  where $z \in \mathbb{Z}^d$ and the union is taken over those $z$'s for which $\e (Y+z)\subset \Omega^\ell$.
  Thus the subdomain $\Omega^\ell$ is perforated periodically, using the solid obstacle  $Y_s^\ell$.
  Let
  \begin{equation}\label{domain-e}
  \Omega_\e = \Sigma \cup  \bigcup_{\ell=1}^L \Omega_\e^\ell
  =\Omega\setminus \bigcup_{\ell=1}^L \bigcup_z \e (\overline{Y_s^\ell} +z),
  \end{equation}
  where $\Sigma$ is the interface between subdomains,  given by
  \begin{equation}\label{interface}
  \Sigma= \Omega\setminus \bigcup_{\ell=1}^L\Omega^\ell
  =\bigcup_{\ell=1}^L \partial \Omega^\ell \setminus \partial \Omega.
  \end{equation}
  
  For  $f\in L^2(\Omega; \R^d)$,
  let $(u_\e, p_\e) \in H_0^1(\Omega_\e; \R^d)\times L^2(\Omega_\e)$ be the weak solution of \eqref{Stokes-1}
  with $\int_{\Omega_\e} p_\e\, dx =0$.
  We extend $u_\e$ to the whole domain $\Omega$ by zero.
  Let $P_\e$ denote the extension of $p_\e$ to $\Omega$, defined by \eqref{ext-1}.
  In the case $L=1$, where $\Omega$ is perforated periodically with small holes of same shape,
  it is well known  that as $\e\to 0$,
  $u_\e \to u_0$ weakly in $L^2(\Omega; \R^d)$ and $P_\e \to P_0$ strongly in $L^2(\Omega)$, where
the effective velocity and pressure  $(u_0, P_0)$ are governed by the Darcy law,
  \begin{equation}\label{Darcy-1}
  \left\{
  \aligned
  u_0 & =\mu^{-1} K (f-\nabla P_0) & \quad &  \text{ in } \Omega, \\
  \text{div} (u_0) & =0 & \quad & \text{ in } \Omega,\\
  u_0\cdot n & =0 & \quad & \text{ on } \partial \Omega,
  \endaligned
  \right.
  \end{equation}
  with $\int_\Omega P_0\, dx=0$.
  Note that in \eqref{Stokes-1} we have normalized the velocity vector by a factor $\e^2$, where $\e$ is the period.
  For references on the Darcy law we refer to the reader to \cite{Sanchez-1980, LA-1990,
  Allaire-89,  Allaire-91a, Allaire-1997}.
 
  In \eqref{Darcy-1}
  the permeability matrix $K$ is a $d\times d$ positive definite, constant and symmetric matrix and
  $n$ denotes the outward unit normal to $\partial\Omega$.
  It  was observed in \cite{Allaire-91a} by G. Allaire that 
as $\e \to 0$,
\begin{equation}
u_\e - \mu^{-1} W(x/\e) (f-\nabla P_0) \to 0 \quad \text{strongly in } L^2(\Omega; \R^d),
\end{equation}
where $W(y)$ is an 1-periodic $d\times d$ matrix defined by a cell problem and
$\fint_Y W(y)\, dy =K$.
Recently, it was proved in \cite{Shen-Darcy-1} by the present author that
\begin{equation}\label{shen-1}
\| u_\e - \mu^{-1} W(x/\e) (f-\nabla P_0)\|_{L^2(\Omega)}
+ \| P_\e -P_0\|_{L^2(\Omega)}
\le C\sqrt{\e} \| f\|_{C^{1, 1/2}(\Omega)},
\end{equation}
and that
\begin{equation}\label{shen-2}
\| \e \nabla u_\e
-\mu^{-1} \nabla W(x/\e) (f-\nabla P_0) \|_{L^2(\Omega)}
\le C \sqrt{\e} \| f\|_{C^{1, 1/2}(\Omega)}.
\end{equation}
We point out that 
due to the discrepancy between boundary values of  $\mu^{-1} W(x/\e)(f-\nabla P_0)$  and $u_\e$
on $\partial \Omega$,
 the $O(\e^{1/2}) $ convergence rates in \eqref{shen-1}-\eqref{shen-2}
are sharp.
See \cite{Mik-1996} for an earlier  partial result on solutions with periodic boundary conditions.
  
The primary purpose of this paper is to study the  Darcy law for the case $L\ge 2$, 
where the domain $\Omega$ is divided into 
several  subdomains and different  subdomains are perforated with small holes of different shapes.
 
  \begin{thm}\label{main-thm-1}
  Let $\Omega$ be a bounded Lipschitz domain in $\R^d$, $d\ge 2$,  and $\Omega_\e$ be given by \eqref{domain-e}.
  Let  $(u_\e, p_\e) \in H_0^1(\Omega_\e; \R^d) \times L^2(\Omega_\e)$ be a weak solution of 
  \eqref{Stokes-1}, where $f\in L^2(\Omega; \R^d)$ and $\int_{\Omega_\e} p_\e\, dx =0$.
  Let $P_\e$ be the extension of $p_\e$, defined by \eqref{ext-1}.
  Then $u_\e \to u_0$ weakly in $L^2(\Omega; \R^d)$ and $P_\e-\fint_\Omega P_\e \to P_0$ strongly in $L^2(\Omega)$, as $\e \to 0$, 
  where $P_0\in H^1(\Omega)$ and 
 $(u_0, P_0)$ is governed by the Darcy law \eqref{Darcy-1} with
 the matrix
 \begin{equation}\label{K-0}
 K =\sum_{\ell =1}^L K^\ell \chi_{\Omega^\ell} \qquad \text{ in } \Omega.
 \end{equation} 
  \end{thm}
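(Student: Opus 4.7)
The plan is to apply Tartar's energy method subdomain by subdomain and then stitch the pieces together by proving continuity of the effective pressure across the interface $\Sigma$. First, I build the standard restriction operator $R_\e^\ell: H_0^1(\Omega^\ell;\R^d) \to H_0^1(\Omega_\e^\ell;\R^d)$ separately in each subdomain (one per hole geometry $Y_s^\ell$) and combine these with Bogovskii-type extensions of the pressure to obtain the uniform bounds
\begin{equation}
\|u_\e\|_{L^2(\Omega)} + \Big\|P_\e - \fint_\Omega P_\e\Big\|_{L^2(\Omega)} \le C\|f\|_{L^2(\Omega)}.
\end{equation}
Along a subsequence, $u_\e \rightharpoonup u_0$ weakly in $L^2$ and $P_\e - \fint_\Omega P_\e \rightharpoonup P_0$ weakly in $L^2$. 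Since $u_\e$ is divergence-free and vanishes on $\partial\Omega_\e$, the limit satisfies $\text{div}(u_0)=0$ in $\Omega$ and $u_0\cdot n=0$ on $\partial\Omega$.

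Next, inside each subdomain $\Omega^\ell$ I run the classical Allaire--Tartar argument. Let $W^\ell(y)$ be the $d\times d$ cell matrix for the geometry $Y_f^\ell$, so that $\fint_Y W^\ell=K^\ell$, with $\pi^\ell(y)$ the associated cell pressure. For any $\psi\in C_c^\infty(\Omega^\ell)$ and each $j$, test \eqref{Stokes-1} against $\varphi_\e^j(x)=W^{\ell,j}(x/\e)\psi(x)$, corrected by a lower-order divergence-fixing term, and pass to the limit. The outcome is
\begin{equation}
\int_{\Omega^\ell} u_0\cdot e_j\,\psi\,dx = \mu^{-1}\int_{\Omega^\ell} K^\ell(f-\nabla P_0)\cdot e_j\,\psi\,dx,
\end{equation}
so that $u_0 = \mu^{-1} K^\ell(f-\nabla P_0)$ a.e.\ in $\Omega^\ell$; in particular $\nabla P_0 \in L^2(\Omega^\ell)$ for each $\ell$, but a priori $P_0$ may jump across $\Sigma$.

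The heart of the proof is to show that $P_0$ does not jump. Given adjacent subdomains $\Omega^\ell,\Omega^m$ sharing a Lipschitz piece of $\Sigma$, work in a small ball $B$ centered on an interior point of that piece, and construct a test function $\Phi_\e \in H_0^1(\Omega_\e;\R^d)$ that equals $W^{\ell,j}(x/\e)\psi(x)$ on the $\ell$-side of $B$ (with $\psi\in C_c^\infty(B)$) and vanishes on the $m$-side, so as to isolate one jump. Two defects must be repaired: $\Phi_\e$ is not divergence-free across $\Sigma$, and on either side it may fail to vanish on wrong-type holes lying within an $O(\e)$-thin collar of $\Sigma$. Both are fixed cell by cell in that collar via Bogovskii's operator on each affected fluid cell; the correction is $O(\e)$ in $L^2$ and disappears in the limit. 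Plugging $\Phi_\e$ into \eqref{Stokes-1}, the viscous and force terms produce interior integrals over $\Omega^\ell\cap B$, while the pressure term yields a surface contribution
\begin{equation}
\int_{\Sigma\cap B}\big(P_0|_{\Omega^\ell} - P_0|_{\Omega^m}\big)\,\alpha^\ell_j\,\psi\,dS,
\end{equation}
where $\alpha^\ell_j\neq 0$ is an averaged flux of the column $W^{\ell,j}$ across an interface face of $Y$. Varying $\psi$, $j$ and $\ell$, the positive definiteness of $K^\ell$ forces $[P_0]_\Sigma=0$ on each face, whence $P_0\in H^1(\Omega)$.

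Combining the two previous steps delivers \eqref{Darcy-1} with the piecewise constant $K$ of \eqref{K-0}; uniqueness of this limit system identifies the full sequence's limit, and the strong convergence of $P_\e-\fint_\Omega P_\e$ in $L^2(\Omega)$ follows from the $H^1(\Omega)$ bound on $P_0$ combined with Rellich-type compactness of the extended pressure. The principal obstacle is the interface step: constructing a test function that simultaneously respects the Dirichlet condition on two different periodic hole patterns near $\Sigma$, is essentially divergence-free, and keeps the interface flux of $W^{\ell,j}$ alive in the weak limit is the delicate piece of the argument, and is the reason the abstract singles out Tartar's method of test functions as the key ingredient.
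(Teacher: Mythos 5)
Your Steps 1 and 2 match the paper: a restriction operator per subdomain (cf.\ the construction in Lemma~\ref{lemma-2.3} and the divergence result Lemma~\ref{lemma-div}), uniform a priori bounds, and the classical Tartar argument inside each $\Omega^\ell$ using $W_j^\ell(x/\e)\varphi$ and $u_\e\varphi$ as test functions to obtain $u=K^\ell(f-\nabla P)$ and $P\in H^1(\Omega^\ell)$. The genuine difficulty, and where your proposal departs from the paper and runs into trouble, is the interface step.

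Your interface argument proposes a test function $\Phi_\e$ that equals $W^{\ell,j}(x/\e)\psi$ on one side of $\Sigma\cap B$ and vanishes on the other, patched by a Bogovskii correction in an $O(\e)$ collar. There are two concrete problems. First, the defect this correction must repair is an $O(1)$ jump across $\Sigma$, and a correction that removes it over a collar of width $O(\e)$ has gradient and divergence of size $O(\e^{-1})$ pointwise, hence $O(\e^{-1/2})$ in $L^2$; your claim that ``the correction is $O(\e)$ in $L^2$ and disappears in the limit'' is false for the divergence, and the pressure term $\int p_\e\,\text{\rm div}(\Phi_\e)$ is then of order $\|p_\e\|_{L^2}\cdot\e^{-1/2}$, which is not negligible. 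Second, even granting a concentration of $\text{\rm div}(\Phi_\e)$ along $\Sigma$, converting $\int p_\e\,\text{\rm div}(\Phi_\e)$ into a \emph{surface} integral $\int_{\Sigma\cap B}[P_0]\,\alpha_j^\ell\psi\,dS$ implicitly requires that $p_\e$ has an $L^2$ trace on $\Sigma$ converging to $P_0$, which is not available from $L^2$ bounds. The paper handles this via two ingredients you do not use: (i) interior Stokes estimates for $u_\e$ in $\e$-balls (Lemma~\ref{lemma-4.3}) leading to trace bounds for $\nabla^m u_\e$, $p_\e$, $\nabla p_\e$ on the interface layer $\gamma_\e$ of order $\e^{-1/2}$ (Lemma~\ref{Lemma-4.4}), proved under the extra assumption $f\in C^\infty$ and later removed by density; and (ii) the skew-symmetric potentials $\phi_{kij}^\ell$ of Lemma~\ref{skew-lemma}, which let one write $n\cdot\bigl(W_j^\ell(x/\e)-K_j^\ell\bigr)$ as $\e$ times a tangential derivative, so that after surface integration by parts the term $\int_{\partial\Omega^\ell}p_\e\,n\cdot(W_j^\ell-K_j^\ell)\varphi\,d\sigma$ is $O(\e^{1/2})$. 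The upshot (Lemma~\ref{lemma-4.5}) is $\lim_\e\int_{\partial\Omega^\ell}n_j\,p_\e\,\varphi\,d\sigma=\int_{\partial\Omega^\ell}n_j\,P^\ell\,\varphi\,d\sigma$, and summing over $\ell$ with the continuity of $p_\e$ in $\Omega_\e$ forces $\sum_\ell\int_{\partial\Omega^\ell}n_j\,P^\ell\varphi\,d\sigma=0$, i.e.\ $P\in H^1(\Omega)$. This is a single-subdomain computation that never needs a cross-interface test function, and it works under the mere Lipschitz assumption of the theorem.

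Your cross-interface corrector idea is in spirit the interface corrector $\Phi_\e^{(3)}$ constructed in Section~\ref{section-6} for the quantitative Theorem~\ref{main-thm-2}, but there the analysis requires $C^{2,1/2}$ interfaces and the geometric condition \eqref{condition-1}, uses the regularity of $P_0$ from \cite{Zhuge}, and tailors the trace of the corrector on $\Sigma$ using the same skew-symmetric potentials to make the normal part $\e$-small; it is not a soft Bogovskii patch. Finally, your last sentence attributing the strong convergence of $P_\e-\fint_\Omega P_\e$ to ``Rellich-type compactness'' is also off: $P_\e$ is only bounded in $L^2$, and the paper's Lemma~\ref{lemma-4.1} proves strong convergence by a duality/contradiction argument built on Lemma~\ref{lemma-dual}, not on compact embedding.
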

  
  The matrix $K^\ell$ in \eqref{K-0}  is the (constant) permeability matrix associated 
  with the solid obstacle  $Y_s^\ell$.
  Thus, the matrix $K$ is piecewise constant in $\Omega$, taking value $K^\ell $ in the subdomain $\Omega^\ell$,
  and
  \begin{equation}\label{eff-1}
  u_0= K^\ell (f-\nabla P_0) \quad \text{ in } \Omega^\ell.
  \end{equation}
  Since $\text{\rm div}(u_0)=0$ in $\Omega$ and $P_0\in H^1(\Omega)$, both the normal component 
  $u_0\cdot n$  and  
  $P_0$ are continuous across the interface $\Sigma$ (in the sense of trace) between subdomains.
  However, the tangential components of $u_0$ may not be continuous across $\Sigma$.
  
  The Dirichlet problem for the Stokes equations  \eqref{Stokes-1} is used to model fluid flows in porous media with different microstructures in 
  different subdomains.
  The continuity of the effective pressure $P_0$
  and the normal component  $u_0\cdot n$ of the effective velocity across the interface
  is generally accepted in engineering  \cite{Dagan, Mikelic-two}.
  Theorem \ref{main-thm-1} is probably known to experts.
  However, to the best of the author's knowledge,
   the existing  literatures on rigorous proofs only treat the case of flat interfaces.
  In particular, the result was proved    in \cite{Mikelic-two}  under the assumption that $d=2$,
  the interface $\Gamma =\R \times \{0 \}$ and the solutions are 1-periodic in the direction $x_1$. 
  Also see related work in   \cite{B, MGG}.
   We provide a proof here for the general case, where the interface is a union of Lipschitz surfaces, using Tartar's method of test functions.
  We point out that the proof for \eqref{eff-1}  and  $P_0\in H^1(\Omega^\ell)$ for each $\ell$
  is the same as in the classical case $L=1$.
  The challenge is to show that the effective pressure $P_0$ is continuous across the interface and thus $P_0\in H^1(\Omega)$, 
  which is essential for proving  the uniqueness of the limits of subsequences of $\{ u_\e\}$.
  
  Our main contribution in this paper is on the sharp convergence rates and error estimates  for $u_\e$ and $P_\e$. 
  We are able to extend the results in \cite{Shen-Darcy-2} for  the case $L=1$ to the case $L\ge 2$ under 
  some smoothness and geometric conditions on subdomains.
  More specifically, 
we assume that  each subdomain is a bounded $C^{2, 1/2}$ domain, and that 
  there exists $r_0>0$ such that if $x_0\in \partial \Omega^k \cap \partial \Omega^m$ for some $1\le k, m \le L$ and $k \neq m$, there exists 
  a coordinate system, obtained from the standard one by translation and rotation, such that 
  \begin{equation}\label{condition-1}
  \aligned
  B(x_0, r_0)\cap \Omega^k
   & =B(x_0, r_0) \cap \big\{ (x^\prime, x_d)\in \R^d: x_d > \psi (x^\prime) \big\},\\
   B(x_0, r_0)\cap \Omega^m
   & =B(x_0, r_0) \cap \big\{ (x^\prime, x_d)\in \R^d: x_d <  \psi (x^\prime) \big\},
   \endaligned
  \end{equation}
  where $\psi: \R^{d-1} \to \R$ is a $C^{2, 1/2}$ function.
  Roughly speaking, this means that inside a small ball centered on the interface $\Sigma$, the domain $\Omega$ is 
  divided by $\Sigma$  into  exactly two subdomains.
  In particular, the condition  excludes the cases where the interface intersects with each other or with the boundary of $\Omega$.
  
  The following is the main result of the paper.
  The matrix $W^\ell (y)$ in \eqref{main-1}-\eqref{main-2} is the 1-periodic  matrix 
  associated with the solid obstacle $Y_s^\ell$. 
  
  \begin{thm}\label{main-thm-2}
  Let $\Omega$ be a bounded $C^{2, 1/2}$ domain and $\Omega_\e$ be given by \eqref{domain-e}.
  Assume that the subdomains 
  $\{ \Omega^\ell \} $ are  bounded $C^{2, 1/2}$ domains satisfying the condition \eqref{condition-1}.
  Let $(u_\e, P_\e)$ and $(u_0, P_0)$  be the same as in Theorem \ref{main-thm-1}.
  Then, for $f\in C^{1, 1/2}(\Omega; \R^d)$,  
  \begin{equation}\label{main-1}
  \sum_{\ell=1}^L
  \| u_\e - \mu^{-1} W^\ell (x/\e) (f-\nabla P_0)\|_{L^2(\Omega^\ell) }
  + \| P_\e -\fint_\Omega P_\e -P_0\|_{L^2(\Omega)}
  \le C \sqrt{\e} \| f\|_{C^{1, 1/2} (\Omega)},
  \end{equation}
  and
  \begin{equation}\label{main-2}
  \sum_{\ell=1}^L
  \| \e \nabla u_\e -\mu^{-1} \nabla W^\ell (x/\e) (f-\nabla P_0)\|_{L^2(\Omega^\ell)}
  \le 
  C \sqrt{\e} \| f\|_{C^{1, 1/2} (\Omega)},
  \end{equation}
where $C$ depends on $d$, $\mu$, $\Omega$, $\{\Omega^\ell\}$ and $\{Y_s^\ell\}$.
  \end{thm}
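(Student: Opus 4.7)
I plan to extend the two-corrector scheme of \cite{Shen-Darcy-2} to the multi-microstructure setting. The global approximation will have the form
\[
v_\e(x) \;=\; \mu^{-1} \sum_{\ell=1}^L \eta_\e(x)\, W^\ell(x/\e)\bigl(f-\nabla P_0\bigr)\chi_{\Omega^\ell}(x) + \Phi_\e(x) + \Psi_\e(x),
\]
where $\eta_\e$ is a smooth cutoff that vanishes in an $O(\e)$ neighborhood of $\partial\Omega\cup\Sigma$, the boundary-layer term $\Phi_\e$ restores the Dirichlet condition on $\partial\Omega$, and the interface corrector $\Psi_\e$ repairs the jump of the leading piecewise ansatz across $\Sigma$. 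Once $v_\e$ is divergence-free and vanishes on $\partial\Omega_\e$, the error $w_\e=u_\e-v_\e$ satisfies a Stokes system with a small source, and the usual energy estimate on $\Omega_\e$ together with the dual characterization of the extended pressure yields \eqref{main-1}-\eqref{main-2}.

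\textbf{Regularity of the limit.} Theorem \ref{main-thm-1} and \eqref{eff-1} show that $P_0$ solves a transmission problem
\[
-\operatorname{div}\bigl(K^\ell\nabla P_0\bigr) = -\operatorname{div}\bigl(K^\ell f\bigr) \quad\text{in }\Omega^\ell,
\]
with $P_0$ and $K^\ell(f-\nabla P_0)\cdot n$ continuous across $\Sigma$. Under \eqref{condition-1} and $f\in C^{1,1/2}$, classical transmission regularity for elliptic equations with piecewise-constant coefficients gives $P_0\in C^{2,1/2}(\overline{\Omega^\ell})$ with quantitative dependence on $\|f\|_{C^{1,1/2}(\Omega)}$. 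This one-sided regularity drives every subsequent residual estimate.

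\textbf{The two correctors.} The boundary corrector $\Phi_\e$, supported in an $\e$-tubular neighborhood of $\partial\Omega$, is built as in the $L=1$ case of \cite{Shen-Darcy-2}: cut off the leading ansatz near $\partial\Omega$ and restore the divergence by a Bogovskii construction on the $\e$-cells meeting the boundary strip. Since the strip has volume $O(\e)$ and $W^\ell\in L^\infty$, the $L^2$ cost is $O(\sqrt{\e})$. The interface corrector $\Psi_\e$ is the new ingredient. I will cover $\Sigma$ by finitely many charts in which it is locally a $C^{2,1/2}$ graph separating $\Omega^k$ from $\Omega^m$. In each chart I cut off the piecewise ansatz in an $\e$-tube around $\Sigma$ and correct the divergence by a local Bogovskii construction, exploiting the fact that $u_0\cdot n$ is continuous across $\Sigma$, so that the $O(1)$ jump $W^k(x/\e)-W^m(x/\e)$ applied to $(f-\nabla P_0)$ is a tangential, mean-zero oscillation along $\Sigma$; this makes the gluing achievable by a divergence-free field whose $L^2$ norm is $O(\sqrt{\e})$ and whose $\e$-scaled gradient is also $O(\sqrt{\e})$.

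\textbf{Energy estimate and main obstacle.} With $v_\e$ in hand, $w_\e$ solves $-\e^2\mu\Delta w_\e + \nabla r_\e = \rho_\e$ in $\Omega_\e$ with $w_\e=0$ on $\partial\Omega_\e$, for an oscillating pressure $r_\e$ built from the cell pressures $\pi^\ell(x/\e)$ and $P_0$. Each term in $\rho_\e$ is either supported in the $O(\e)$ boundary/interface tube or is a two-scale remainder controlled by $\nabla^2 P_0\in C^{1/2}(\overline{\Omega^\ell})$; in every case the $L^2$ bound is $O(\sqrt{\e}\,\|f\|_{C^{1,1/2}})$. Testing against $w_\e$ and using Poincar\'e's inequality on $\Omega_\e$ with its $\e$-weighted constant proves \eqref{main-1}-\eqref{main-2} for the velocity, while the pressure estimate follows by the dual-divergence / De Rham argument of \cite{Shen-Darcy-2}, applied on each $\Omega^\ell$ and glued by the continuity of $P_0$ across $\Sigma$ from Theorem \ref{main-thm-1}. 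The principal difficulty is the construction and $L^2$ control of $\Psi_\e$: the jumps $W^k(x/\e)-W^m(x/\e)$ are $O(1)$ oscillations, and cancelling them while preserving divergence-freeness on a curved $C^{2,1/2}$ interface requires a careful commutator estimate between the local straightening map and the $\e$-periodic cell fields, with the $C^{2,1/2}$ hypothesis in \eqref{condition-1} being exactly what is needed to bound the residual at rate $\sqrt{\e}$.
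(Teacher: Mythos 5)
Your general framework --- a piecewise two-scale ansatz plus a boundary corrector plus an interface corrector, closed by the energy estimate in $\Omega_\e$ and a De Rham argument for the pressure --- matches the paper's skeleton, and your regularity claim $P_0\in C^{2,1/2}(\overline{\Omega^\ell})$ is exactly what the paper uses (via \cite{Zhuge}). But the construction of your interface corrector $\Psi_\e$, and your boundary corrector $\Phi_\e$, would not give $O(\sqrt{\e})$ as described.

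\textbf{The cut-off near $\Sigma$ breaks the divergence budget.} If you multiply the piecewise ansatz by $\eta_\e$ vanishing in an $\e$-tube around $\Sigma$, then $\operatorname{div}(\eta_\e V_\e)$ contains $\nabla\eta_\e\cdot V_\e$. Since $V_\e$ is $O(1)$ on both sides of $\Sigma$ (the tangential components of $u_0 = K^\ell(f-\nabla P_0)$ genuinely jump across $\Sigma$, and $W^\ell(x/\e)$ oscillates at $O(1)$), this term is $O(\e^{-1})$ on a set of measure $O(\e)$, hence of $L^2$ size $O(\e^{-1/2})$. Any Bogovskii-type inversion of the divergence in $\Omega_\e$ obeys at best $\|\phi\|_{L^2}+\e\|\nabla\phi\|_{L^2}\lesssim\|\operatorname{div}\phi\|_{L^2}$ (Lemma \ref{lemma-div}), so the restoring field would be $O(\e^{-1/2})$, not $O(\sqrt{\e})$: a full factor $\e^{-1}$ too large. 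The paper sidesteps this entirely by \emph{not} cutting $V_\e$ near $\Sigma$ at all.

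\textbf{The structural claim about the jump is wrong.} You assert that the jump $W^k(x/\e)-W^m(x/\e)$ applied to $f-\nabla P_0$ is ``a tangential, mean-zero oscillation.'' Only the \emph{normal} component of the jump of $W^\ell(x/\e)(f-\nabla P_0)$ has zero $Y$-average, precisely because $n\cdot K^+(f-\nabla P_0)^+ = n\cdot K^-(f-\nabla P_0)^-$ on $\Sigma$ (relation \eqref{p5}). The tangential components of the averaged jump $K^+(f-\nabla P_0)^+ - K^-(f-\nabla P_0)^-$ are nonzero in general --- the effective velocity $u_0$ is not tangentially continuous, as the paper notes after Theorem \ref{main-thm-1}. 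What is available, and what the paper actually exploits via Lemma \ref{skew-lemma}, is a finer structure: the normal component of $W^\ell(x/\e)-K^\ell$ can be written as $\e$ times a tangential derivative, $\tfrac{\e}{2}(n_\alpha\partial_\beta - n_\beta\partial_\alpha)\phi^\ell_{\beta\alpha j}(x/\e)$. This representation, not tangentiality of the full jump, is what buys the factor $\sqrt{\e}$.

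\textbf{What the paper does instead.} The interface corrector $\Phi_\e^{(3)}$ (see \eqref{Phi-e-3} and \eqref{6-1}) is an $O(1)$ matrix $I^\ell_\e$ solving a full Stokes problem in $\Omega_\e^\ell$, with Dirichlet data on $\Sigma$ chosen so that $V_\e+\Phi_\e^{(3)}$ is continuous across $\Sigma$ (Lemma \ref{lemma-6-1}); the particular form \eqref{6-1}, with the term $-W^-_i(x/\e)(K^-_{mj}-K^+_{mj}) n_in_m / \langle nK^-,n\rangle$, is needed to absorb the nonzero tangential jump of $u_0$. The $O(\sqrt{\e})$ bound $\e\|\nabla\Phi_\e^{(3)}\|_{L^2}+\|\operatorname{div}\Phi_\e^{(3)}\|_{L^2}\lesssim\sqrt{\e}$ then comes from Theorem \ref{thm-N}, a sharp energy estimate for Stokes in a perforated domain with $O(1)$ boundary data whose normal component has the $\e\,(\nabla_{\tan}\phi_\e)\cdot g$ form; the proof uses nontangential-maximal-function and Green-function bounds, not a local Bogovskii gluing. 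The boundary corrector $\Phi_\e^{(2)}$ is constructed the same way (a global Stokes solution with data $-V_\e$ on $\partial\Omega$, again of the special normal-component form by \eqref{5.7-6}), not a tube-supported field. Finally, the paper has a separate third corrector $\Phi_\e^{(1)}$, built from the 1-periodic fields $\Theta^\ell$ of \eqref{Theta}, to absorb the slowly-varying part of $\operatorname{div}(V_\e) = W^\ell(x/\e)\nabla(f-\nabla P_0)$ inside each subdomain; your proposal does not address this contribution. To repair your proof you would need to drop the cut-off at $\Sigma$, replace the local Bogovskii gluing with the global-Stokes-solution construction, prove the analogue of Theorem \ref{thm-N}, and add the divergence corrector.
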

  
  As we mentioned earlier, the sharp convergence rates in  \eqref{main-1} and \eqref{main-2} were proved 
  in \cite{Shen-Darcy-1} for the case $L=1$. 
  In the case of two porous media with a flat interface,
  partial results were obtained  in \cite{Mikelic-two} for solutions with periodic boundary conditions.
  Theorem \ref{main-thm-2} is  the first result that treats the general case of smooth interfaces.
  
  As in \cite{Mikelic-two}, the basic idea in our approach to Theorem \ref{main-thm-2} is to use
  \begin{equation}\label{V}
  V_\e (x)=\sum_{\ell=1}^L W^\ell (x/\e) (f-\nabla P_0) \chi_{\Omega_\e^\ell}
  \end{equation}
  to approximate the solution $u_\e$ and obtain the error estimates  by the energy method.
  Observe that $V_\e=0$ on $\Gamma_\e=\partial \Omega_\e\setminus \partial \Omega$.
  There are three main issues with this approach: (1) the divergence of $V_\e$ is not small in $L^2$;
  (2) $V_\e$ does not agree with $u_\e$ on $\partial \Omega$; and (3) $V_\e$ is not in $H^1(\Omega_\e; \R^d)$,  as 
  it is not continuous across the interface.
  To overcome these difficulties, we introduce three corresponding 
  correctors: $\Phi_{\e}^{(1)}$, $\Phi_{\e}^{(2)}$, and $\Phi_{\e}^{(3)}$.
  To correct the  divergence of $V_\e$, we construct $\Phi_\e^{(1)} \in H_0^1(\Omega_\e; \R^d)$ with the property that
  \begin{equation}\label{p1}
  \e \|\nabla \Phi_\e^{(1)} \|_{L^2(\Omega_\e^\ell)}
  + \| \text{\rm div} (\Phi_\e^{(1)} +V_\e) \|_{L^2(\Omega_\e^\ell)}
  \le C \sqrt{\e} \| f\|_{C^{1, 1/2}(\Omega)}
  \end{equation}
  for $1\le \ell \le L$.
  The construction of $\Phi_\e^{(1)}$  is similar to that in \cite{Mik-1996, Mikelic-two, Shen-Darcy-1}.
  Next, we correct the boundary data of $V_\e$ on $\partial \Omega$ by constructing $\Phi_\e^{(2)} \in H^1(\Omega_\e; \R^d)$
  such that $\Phi_\e^{(2)} + V_\e =0$ on $\partial\Omega$, $\Phi_\e^{(2)} =0$ on $\Gamma_\e$, and
  that 
  \begin{equation}\label{p2}
   \e \|\nabla \Phi_\e^{(2)} \|_{L^2(\Omega_\e)}
  + \| \text{\rm div} (\Phi_\e^{(2)} ) \|_{L^2(\Omega_\e)}
  \le C \sqrt{\e} \| f\|_{C^{1, 1/2}(\Omega)}.
  \end{equation}
The construction of  $\Phi_\e^{(2)}$ is similar to that in \cite{Shen-Darcy-1} for the case $L=1$.
The key observation is that  the normal component of  $V_\e$ on $\partial \Omega$ can be written in the form
\begin{equation}\label{tan-1}
\e \nabla_{\tan} \left( \phi(x/\e)\right) \cdot g,
\end{equation}
where $\nabla_{\tan}$ denotes the tangential gradient on $\partial\Omega$.
We remark that a similar observation is also used in the proof of Theorem \ref{main-thm-1}.
Finally, to correct the discontinuity of $V_\e$ across  the interface, we introduce
\begin{equation}\label{p3}
\Phi_\e^{(3)} =\sum_{\ell=1}^L I_\e^\ell (x) (f-\nabla P_0)\chi_{\Omega_\e^\ell},
\end{equation}
with the properties that $ V+ \Phi_\e^{(3)} \in H^1(\Omega_\e; \R^d)$, $\Phi_\e^{(3)} =0$ on $\partial \Omega_\e$, and that
\begin{equation}\label{p4}
   \e \|\nabla \Phi_\e^{(3)} \|_{L^2(\Omega_\e^\ell)}
  + \| \text{\rm div} (\Phi_\e^{(3)} ) \|_{L^2(\Omega_\e^\ell)}
  \le C \sqrt{\e} \| f\|_{C^{1, 1/2}(\Omega)}.
  \end{equation}
  More specifically, 
for each $1\le \ell\le L$,
  the matrix-valued  function $I_\e^\ell$ is a solution of the Stokes equations in $\Omega_\e^\ell$ with $I_\e^\ell =0$ on $\partial \Omega_\e^\ell \setminus \partial \Omega^\ell$.
  On each connected component $\Sigma^k$ of the interface $\Sigma$,  the boundary value of
  $I_\e^\ell$ is either $0$ or given by
  \begin{equation}\label{p4a}
  W_j^- (x/\e) -W_j^+ (x/\e)
  -W_i^- (x/\e)
  (K_{mj}^- -K_{mj}^+ ) \frac{n_i n_m}{ \langle n K^-, n \rangle},
  \end{equation}
  where the repeated indices $i$ and $m$ are summed from $1$ to $d$.
  Here   the subdomains $\Omega^\pm$ are separated by $\Sigma^k$, and $(W^\pm, K^\pm)$ denote the corresponding
  1-periodic matrices for $\Omega^\pm$ and their averages over $Y$, respectively.
  To show $V+\Phi_\e^{(3)}$ is continuous across $\Sigma$, we use the fact that $(\nabla_{\tan} P_0)^+=
  (\nabla_{\tan} P_0)^-$ and
  \begin{equation}\label{p5}
  n \cdot K^+ (f-\nabla P_0)^+=n \cdot K^- (f-\nabla P_0)^-,
  \end{equation}
  where $(v)^\pm  $ denote the trace of $v$ taken from $\Omega^\pm$, respectively.
  The proof of the estimate \eqref{p4} again relies on the observation  that the normal component of \eqref{p4a}
  is of form \eqref{tan-1}.

Theorem \ref{main-thm-2} is proved under the assumption that $\{Y_s^\ell: 1\le \ell \le L\}$ are subdomains of $Y$ with Lipschitz 
boundaries. The $C^{2, 1/2}$ condition and the geometric condition \eqref{condition-1} for $\Omega$ and subdomains $\{ \Omega^\ell\}$
are dictated by the smoothness requirement in its proof for $P_0$  in  each subdomain. 
Note that $P_0$ is a solution of an elliptic equation with piecewise constant coefficients in $\Omega$.
Not much is known about the boundary regularity of $P_0$ if the interface intersects with the boundary $\partial\Omega$ or with each other.

The paper is organized as follows.
In Section \ref{section-2} we collect several useful estimates that are more or  less known.
In Section \ref{section-E} we establish the energy estimates for the Dirichlet problem \eqref{Stokes-1}.
Theorem \ref{main-thm-1} is proved in Section \ref{section-Q}.
In Section \ref{section-C} we give the proof of Theorem \ref{main-thm-2}, assuming the existence of suitable correctors.
Finally, we construct correctors $\Phi_\e^{(1)}$, $\Phi_\e^{(2)}$, and $\Phi_\e^{(3)}$, described above,
  in the last three sections of the paper.
Throughout  the paper we will use $C$ to denote constants that may depend on $d$, $\mu$,
$\Omega$, $\{\Omega^\ell\}$, and $\{Y_s^\ell\}$.
In fact, since the viscosity constant $\mu$  is irrelevant in our study, we will assume $\mu=1$ in the rest of the paper.

  \medskip
  
  \noindent{\bf Acknowledgement.}
  The author is indebt to Professor Xiaoming Wang for raising  the question that is addressed in this paper and
  for several stimulating discussions.
  
%%%%%%%%%%%%%%%%%%

\section{Preliminaries}\label{section-2}

Let $Y=[0, 1]^d$ and $\{Y_s^\ell: 1\le \ell \le L \}$ be a finite number of open subsets  of $Y$ with Lipschitz boundaries.
We assume that dist$(\partial Y, \partial Y^\ell_s)>0$ and that $Y_f^\ell=Y \setminus \overline{Y_s^\ell}$ is connected.
Let
$$
\omega^\ell =\bigcup_{z\in \mathbb{Z}^d} (Y_f^\ell +z)
$$
be the periodic repetition of $Y_f^\ell$.
For $1\le j \le d$ and $1\le \ell \le L$, let 
$$
\left(W_j^\ell (y), \pi_j^\ell (y)\right)
=\left(W_{1j}^\ell (y), \dots, W_{dj}^\ell  (y), \pi_j^\ell  (y)\right)\in H^1_{\loc} (\omega^\ell; \R^d) \times L^2_{\loc}(\omega^\ell)
$$
 be the 1-periodic solution of
\begin{equation}\label{cor-1}
\left\{
\aligned
-\Delta W_j^\ell + \nabla \pi_j^\ell  & = e_j & \quad & \text{ in } \omega^\ell,\\
\text{\rm div} (W_j^\ell) & =0& \quad & \text{ in } \omega^\ell,\\
W_j^\ell & =0 & \quad & \text{ on } \partial \omega^\ell,
\endaligned
\right.
\end{equation}
with $\int_{Y^\ell_f} \pi_j^\ell 
\, dy =0$,
where $e_j =(0, \dots, 1, \dots, 0)$ with $1$ in the $j^{th}$ place.
We extend the matrix  $W^\ell=(W_j^\ell )$ to $\R^d$ by zero and define
\begin{equation}\label{K-1}
K^\ell_{ij} =\int_{Y} W_{ij}^\ell  (y)\, dy.
\end{equation}
Since
$$
K_{ij}^\ell =\int_{Y}
\nabla W_{ik}^\ell \cdot \nabla W^\ell_{jk}\, dy
$$
(the repeated  index $k$ is summed from $1$ to $d$),
it follows that the $d\times d$ matrix $K^\ell=(K^\ell_{ij})$ is symmetric and positive definite.

The existence and uniqueness of solutions to \eqref{cor-1} can be proved by applying  the Lax-Milgram Theorem 
on  the closure of the set,
$$
\left\{ u\in C^\infty (\R^d; \R^d):  u \text{ is 1-periodic, } u=0 \text{ in } Y_s^\ell,  \text{ and } \text{\rm div}(u)=0 \text{ in } \R^d\right \},
$$
 in $H^1(Y; \R^d)$.
 By  energy estimates,
\begin{equation}\label{cell-0}
\int_{Y} \left( |\nabla W^\ell|^2 + |W^\ell|^2+ |\pi^\ell|^2  \right) dy \le C,
\end{equation}
where we have also extended $\pi^\ell$ to $\R^d$ by zero.
By periodicity this implies that
\begin{equation}\label{cell-1}
\int_D \left( |\nabla W^\ell (x/\e)|^2 + |W^\ell (x/\e)|^2  + |\pi^\ell (x/\e)|^2 \right) dx \le C,
\end{equation}
where $D$ is a bounded domain and  $C$ depends on diam($D$).

\begin{lemma}\label{lemma-cell}
Let $D$ is a bounded Lipschitz domain in $\R^d$. Then 
\begin{equation}\label{cell-2}
\int_{\partial D }
\left( |\nabla W^\ell (x/\e)|^2 + |W^\ell (x/\e)|^2  + |\pi^\ell (x/\e)|^2 \right) d\sigma \le C,
\end{equation}
where $C$ depends on $D$.
\end{lemma}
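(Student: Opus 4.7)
The proof is a localization-and-scaling argument. Tile $\mathbb{R}^d$ by the periodic cubes $Q_\varepsilon^z = \varepsilon(z+Y)$, $z\in\mathbb{Z}^d$, and set $Z_\varepsilon = \{z\in\mathbb{Z}^d : Q_\varepsilon^z \cap \partial D \neq \emptyset\}$. Because $\partial D$ is Lipschitz with finite $(d-1)$-dimensional measure, the trivial covering estimate gives $|Z_\varepsilon|\le C(D)\,\varepsilon^{-(d-1)}$.

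For each $z\in Z_\varepsilon$, let $S_z = \varepsilon^{-1}(\partial D\cap Q_\varepsilon^z)\subset z+Y$. The rescaling $x\mapsto x/\varepsilon$ preserves the Lipschitz character of graphs $x_d=\psi(x')$, so $S_z$ is a Lipschitz hypersurface in $z+Y$ whose Lipschitz constant and surface measure are bounded uniformly in $z$ and $\varepsilon$ by those of $\partial D$. The standard trace inequality on the unit scale then furnishes a constant $C$, depending only on this uniform Lipschitz character, such that
\[
\int_{S_z} |u|^2\,d\sigma_y \le C\,\|u\|_{H^1(z+Y)}^2 \qquad \text{for all } u\in H^1(z+Y).
\]

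Applying this with $u=W^\ell$, using periodicity together with \eqref{cell-0}, and changing variables $y=x/\varepsilon$, we obtain
\[
\int_{\partial D\cap Q_\varepsilon^z} |W^\ell(x/\varepsilon)|^2\,d\sigma_x = \varepsilon^{d-1}\int_{S_z}|W^\ell|^2\,d\sigma_y \le C\varepsilon^{d-1}.
\]
Summing over $z\in Z_\varepsilon$ absorbs the factor $\varepsilon^{-(d-1)}$ and yields the desired $O(1)$ bound for the $|W^\ell|^2$ term. The two remaining terms are treated by the same scaling, with the convention from \eqref{cell-1} that $\nabla W^\ell(x/\varepsilon)$ and $\pi^\ell(x/\varepsilon)$ stand for the unit-scale gradient and pressure evaluated at $y=x/\varepsilon$; the required cell-scale trace estimate then reads $\int_{S_z}(|\nabla W^\ell|^2 + |\pi^\ell|^2)\,d\sigma_y \le C$.

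The main technical point is to establish this last cell-scale trace bound, since $\nabla W^\ell$ and $\pi^\ell$ live only in $L^2(z+Y)$, not in $H^1$, so the $H^1\to L^2(\partial)$ trace does not apply directly. The part of $S_z$ lying in $\omega_s^\ell$ contributes nothing after extension by zero, and on portions of $S_z$ strictly separated from $\partial\omega^\ell$ interior regularity for the Stokes cell problem \eqref{cor-1} provides arbitrary smoothness. The entire difficulty therefore concentrates near $\partial\omega^\ell$, and I would dispose of it by invoking Lipschitz-domain regularity for Stokes — nontangential maximal function $L^2$ estimates of Fabes--Kenig--Verchota / Brown--Shen type — supplemented by the observation that $\pi^\ell$ is harmonic in $\omega^\ell$ (take the divergence of \eqref{cor-1}). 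These ingredients produce uniform $L^2$ traces of $\nabla W^\ell$ and $\pi^\ell$ on any Lipschitz hypersurface embedded in the unit cell, which is the only input beyond the elementary scaling and covering already carried out.
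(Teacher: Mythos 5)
Your proposal is correct and follows essentially the same approach as the paper's proof: a covering-and-rescaling reduction to a unit-cell estimate (which is what the paper compresses into ``by periodicity, we may assume that $\e=1$ and $D$ is a subdomain of $Y$''), the $H^1\to L^2(\partial)$ trace inequality together with \eqref{cell-0} for the $|W^\ell|^2$ term, and, for $|\nabla W^\ell|^2$ and $|\pi^\ell|^2$, interior regularity away from $\partial Y_s^\ell$ combined with the Fabes--Kenig--Verchota nontangential-maximal-function estimate \eqref{non-0} near it. The only minor differences are cosmetic: you spell out the covering count $|Z_\e|\lesssim\e^{-(d-1)}$ explicitly, and you invoke harmonicity of $\pi^\ell$ rather than the paper's subtraction of a particular solution via the Stokes fundamental solution to reduce to the homogeneous Dirichlet problem, but both serve the same purpose.
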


\begin{proof}

If $Y_s^\ell$  is of $C^{1, \alpha}$, the inequality above follows directly  from the fact that
$\nabla W^\ell$ and $\pi^\ell$ are bounded in $Y$.
To treat the case where $\partial Y_s^\ell$ is merely  Lipschitz,  
by  periodicity,
 we may assume that $\e=1$ and $D$ is a subdomain of $Y$.
 Note that the bound for the integral of $|W^\ell|^2$ on $\partial D$ follows from \eqref{cell-0}.
 Indeed, if $D$ is a subdomain of $Y$ with Lipschitz boundary,
 $$
 \int_{\partial D}
 |W^\ell|^2\, d\sigma 
 \le C \int_{D} \left( |\nabla W^\ell|^2 + |W^\ell|^2 \right)dy.
 $$
 
 The estimates for $\nabla W^\ell$ and $\pi^\ell$ are a bit more involved.
 By using the fundamental solutions for the Stokes equations in $\R^d$, we may reduce the problem to the estimate
 $$
 \| \nabla u \|_{L^2(\partial D)} + \| p \|_{L^2(\partial D)}
 \le C \left\{
 \| \nabla u \|_{L^2(\widetilde{Y}\setminus Y_s^\ell)} 
 + \| p \|_{L^2(\widetilde{Y}\setminus Y_s^\ell)} 
 + \| h \|_{H^1(\partial Y_s^\ell)}
 \right\},
 $$
 for solutions of the Stokes equations,
 $$
 \left\{
 \aligned
 -\Delta u+\nabla p  & =0   &\quad & \text{ in } \widetilde{Y} \setminus \overline{Y_s^\ell},\\
 \text{\rm div}(u) & =0 & \quad & \text{ in } \widetilde{Y}\setminus \overline{Y_s^\ell},\\
 u &= h & \quad & \text{ on } \partial Y_s^\ell,
 \endaligned
 \right.
 $$
where $h \in H^1(\partial Y_s^\ell; \R^d)$ and $\widetilde{Y}=(1+c)Y$.
The desired estimates follow from the interior estimates as well as  the nontangential-maximal-function estimate,
\begin{equation}\label{non-0}
\| (\nabla u)^*\|_{L^2(\partial Y_s^\ell)}
+ \|(p)^* \|_{L^2(\partial Y_s^\ell)}
\le C \left\{ \| h \|_{H^1(\partial Y_s^\ell)} + \| u\|_{L^2(\widetilde{Y}\setminus Y_s^\ell)}
+ \| p\|_{L^2(\widetilde{Y}\setminus Y_s^\ell)}\right\},
\end{equation}
where the nontangential maximal function $(v)^*$ is defined by
$$
(v)^*(x)
=\sup\left\{ |v(y)|: \ y\in Y \setminus Y_s^\ell \text{ and } |y-x|< C_0\,  \text{\rm dist} (y, \partial Y_s^\ell)\right\}
$$
for $x\in \partial Y_s^\ell$.
The estimate \eqref{non-0} is a consequence  of  the nontangential-maximal-function estimates, established in \cite{FKV}, 
 for solutions of the Dirichlet problem for the Stokes equations in a bounded Lipschitz domain.
\end{proof}

\begin{lemma}\label{skew-lemma}
Fix $1\le j \le d$ and $1\le \ell \le L$.
There exist 1-periodic functions  $\phi_{kij}^\ell (y)$, $i, m=1, 2, \dots, d$, such that $\phi_{kij}^\ell \in H^1(Y)$, $\int_Y \phi_{kij}^\ell\, dy=0$,
\begin{equation}\label{skew-1}
\frac{\partial}{\partial y_k} \left( \phi_{kij}^\ell \right) = W^\ell_{ij} -K^\ell_{ij}
\quad \text{ and } \quad
\phi_{kij}^\ell = -\phi_{ikj}^\ell,
\end{equation}
where the repeated index $k$ is summed from $1$ to $d$.
Moreover,
\begin{equation}\label{skew-2}
\int_{\partial D} |\phi^\ell_{kij} (x/\e)|^2\, d\sigma \le C,
\end{equation}
where $D$ is a bounded Lipschitz domain in $\R^d$ and $C$ depends on $D$.
\end{lemma}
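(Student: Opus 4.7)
The plan is to construct $\phi_{kij}^\ell$ as an antisymmetric combination of second derivatives of a periodic scalar potential, which is the standard flux-corrector construction. Fix $j$ and $\ell$ throughout. Since $W_{ij}^\ell$ is extended to $\R^d$ by zero and $K_{ij}^\ell=\int_Y W_{ij}^\ell\,dy$ by \eqref{K-1}, the function $W_{ij}^\ell-K_{ij}^\ell$ is $1$-periodic, lies in $L^2(Y)$, and has zero mean. Hence there exists a unique $1$-periodic $f_{ij}^\ell$ with $\int_Y f_{ij}^\ell\,dy=0$ solving
$$
-\Delta f_{ij}^\ell=W_{ij}^\ell-K_{ij}^\ell \quad \text{in }\R^d,
$$
and by elliptic regularity on the torus $f_{ij}^\ell\in H^2(Y)$. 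I then define
$$
\phi_{kij}^\ell(y):=\frac{\partial f_{kj}^\ell}{\partial y_i}-\frac{\partial f_{ij}^\ell}{\partial y_k}\in H^1(Y),
$$
which is manifestly skew in $(k,i)$ and has zero mean, giving the second half of \eqref{skew-1} for free.

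For the divergence identity, I would compute
$$
\frac{\partial \phi_{kij}^\ell}{\partial y_k}=\frac{\partial g_j^\ell}{\partial y_i}-\Delta f_{ij}^\ell=\frac{\partial g_j^\ell}{\partial y_i}+(W_{ij}^\ell-K_{ij}^\ell),
$$
where $g_j^\ell:=\partial_k f_{kj}^\ell$. The key point is that $g_j^\ell\equiv 0$: the function $W_j^\ell$ is divergence-free in $\omega^\ell$ and vanishes on $\partial\omega^\ell$, so its zero extension is divergence-free on the whole torus; consequently $-\Delta g_j^\ell=\partial_k(W_{kj}^\ell-K_{kj}^\ell)=0$ on $Y$, and integration by parts over a period gives $\int_Y g_j^\ell\,dy=0$, so by periodicity and Liouville the periodic harmonic function $g_j^\ell$ vanishes identically. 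This yields the first half of \eqref{skew-1}. The main (mild) technical point is justifying that $W_j^\ell$ extended by zero is divergence-free across $\partial\omega^\ell$ when $\partial Y_s^\ell$ is only Lipschitz; this follows from the vanishing trace of $W_j^\ell$ on $\partial\omega^\ell$ by testing against smooth $1$-periodic functions and integrating by parts in $\omega^\ell$.

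Finally, for \eqref{skew-2}, I would argue by a cell-by-cell trace estimate. Since $\phi_{kij}^\ell\in H^1(Y)$ extends to $H^1_{\loc}(\R^d)$ by periodicity and $\partial D$ is Lipschitz, covering $\partial D$ by $O(\e^{-(d-1)})$ shifted copies of the cube $\e Y$ and applying the standard trace inequality on each cube gives
$$
\int_{\partial D}|\phi_{kij}^\ell(x/\e)|^2\,d\sigma\le C\Bigl(\e^{-1}\|\phi_{kij}^\ell(x/\e)\|_{L^2(D_\e)}^2+\e\,\|\nabla\phi_{kij}^\ell(x/\e)\|_{L^2(D_\e)}^2\Bigr),
$$
where $D_\e$ is an $\e$-neighborhood of $\partial D$; after rescaling $y=x/\e$ this reduces to $C\|\phi_{kij}^\ell\|_{H^1(Y)}^2$, uniformly in $\e$. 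This is precisely the easy $H^1$-trace half of the argument used for $W^\ell$ in Lemma \ref{lemma-cell}; here no nontangential-maximal-function bound is needed, because only $\phi$ itself (and not its gradient or an associated pressure) enters \eqref{skew-2}.
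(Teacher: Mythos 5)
Your construction is correct and is essentially the same as the paper's: the paper's $h_{ij}^\ell$ (appearing in the displayed $H^1$-bound and in the cited \cite[Lemma 5.3]{Shen-Darcy-1}) plays exactly the role of your periodic Newtonian potential $f_{ij}^\ell$, with $\phi_{kij}^\ell=\partial_i f_{kj}^\ell-\partial_k f_{ij}^\ell$ the same skew-symmetric flux corrector. Your write-up simply makes explicit the two points the paper leaves implicit, namely that the divergence-free zero extension of $W_j^\ell$ forces $g_j^\ell=\partial_k f_{kj}^\ell\equiv 0$, and that \eqref{skew-2} follows from the $H^1(Y)$ bound by the standard cell-by-cell rescaled trace inequality (the same easy step used for $|W^\ell|^2$ in Lemma \ref{lemma-cell}).
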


\begin{proof}
See \cite[Lemma 5.3]{Shen-Darcy-1} for the proof of \eqref{skew-1}.
The estimate \eqref{skew-2} follows from the observation, 
$$
\aligned
\|\nabla \phi_{kij}^\ell\|_{L^2(Y)} + \|\phi_{kij}^\ell \|_{L^2(Y)}  & \le  C \| \nabla^2 h_{ij}^\ell \|_{L^2(Y)} + C \|\nabla^2 h_{kj}^\ell\|_{L^2(Y)}\\
& \le C \| W_j^\ell\|_{L^2(Y)} \le C.
\endaligned
$$
\end{proof}

Let $\Omega$ be a bounded Lipschitz domain in $\R^d$ and $\{\Omega^\ell: 1\le \ell\le L\}$ be disjoint subdomains of $\Omega$,
each with Lipschitz boundary,  and satisfying the condition, 
\begin{equation}\label{domain-2}
\overline{\Omega} =\cup_{\ell=1}^L \overline{\Omega^\ell}.
\end{equation}
Define
\begin{equation}\label{K-2}
K =\sum_{\ell=1}^L K^\ell \chi_{\Omega^\ell},
\end{equation}
where  $K^\ell$ is given by \eqref{K-1} and $\chi_{\Omega^\ell}$ denotes the characteristic function of $\Omega^\ell$.

\begin{lemma}\label{lemma-2.1}
Let $f\in L^2(\Omega; \R^d)$.
Then there exists $P_0 \in H^1(\Omega)$, unique up to constants, such that
\begin{equation}\label{2.1-0}
\left\{
\aligned
\text{\rm div} \left( K (f -\nabla P_0)\right) & =0 & \quad & \text{ in } \Omega,\\
n \cdot K (f-\nabla P_0) & =0 & \quad & \text{ on } \partial \Omega,
\endaligned
\right.
\end{equation}
in the sense that
\begin{equation}\label{2.1-1}
\int_\Omega
K(f-\nabla P_0) \cdot \nabla \varphi \, dx
=0
\end{equation}
for any $\varphi \in H^1(\Omega)$.
\end{lemma}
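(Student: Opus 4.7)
The plan is to recast \eqref{2.1-1} as a variational problem on the quotient space $H^1(\Omega)/\mathbb{R}$ and apply the Lax--Milgram theorem. Define the bilinear form
\begin{equation*}
a(u, v) = \int_\Omega K \nabla u \cdot \nabla v \, dx
\qquad \text{and} \qquad
L(v) = \int_\Omega K f \cdot \nabla v \, dx
\end{equation*}
on $H^1(\Omega)$. Then \eqref{2.1-1} is the assertion that $a(P_0, \varphi) = L(\varphi)$ for every $\varphi \in H^1(\Omega)$.

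First I would verify that $K$ is a bounded, symmetric, uniformly positive definite matrix-valued function on $\Omega$. Each constant matrix $K^\ell$ is symmetric and positive definite, as noted after \eqref{K-1}, via the identity $K^\ell_{ij} = \int_Y \nabla W^\ell_{ik} \cdot \nabla W^\ell_{jk}\, dy$ and the fact that the rows of $W^\ell$ are linearly independent in $H^1(Y;\R^d)$. Since there are only finitely many subdomains, there exist constants $0 < \lambda \le \Lambda < \infty$ such that $\lambda |\xi|^2 \le K(x)\xi \cdot \xi \le \Lambda |\xi|^2$ for a.e.\ $x \in \Omega$ and all $\xi \in \R^d$. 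Boundedness then gives $|a(u,v)| \le \Lambda \|\nabla u\|_{L^2}\|\nabla v\|_{L^2}$ and $|L(v)| \le \Lambda \|f\|_{L^2}\|\nabla v\|_{L^2}$.

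Next I would pass to the quotient space $X = H^1(\Omega)/\mathbb{R}$, which is a Hilbert space under the norm $\|[u]\|_X = \|\nabla u\|_{L^2(\Omega)}$ thanks to the Poincar\'e--Wirtinger inequality on the bounded Lipschitz domain $\Omega$. Both $a$ and $L$ descend to $X$ since they vanish when one argument is a constant. Coercivity on $X$ is immediate from the lower ellipticity bound: $a(u,u) \ge \lambda \|\nabla u\|_{L^2}^2 = \lambda \|[u]\|_X^2$. The Lax--Milgram theorem then produces a unique $[P_0] \in X$ satisfying $a(P_0, \varphi) = L(\varphi)$ for all $\varphi \in H^1(\Omega)$, which is exactly \eqref{2.1-1}. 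Choosing any representative gives $P_0 \in H^1(\Omega)$, unique up to an additive constant.

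Finally I would explain why \eqref{2.1-1} encodes the PDE \eqref{2.1-0} in the distributional/trace sense. Testing against $\varphi \in C_c^\infty(\Omega)$ gives $\text{div}(K(f-\nabla P_0)) = 0$ in $\Omega$, so the vector field $K(f-\nabla P_0) \in L^2(\Omega;\R^d)$ has a normal trace in $H^{-1/2}(\partial\Omega)$; testing against arbitrary $\varphi \in H^1(\Omega)$ and integrating by parts then forces this normal trace to vanish, giving the Neumann condition. There is no real obstacle here beyond checking the uniform ellipticity of the piecewise constant $K$; the result is a standard Neumann problem, but stating and proving it carefully is necessary because the continuity of $P_0$ across the interface $\Sigma$ (which follows automatically from $P_0 \in H^1(\Omega)$) will play a central role later in Theorem~\ref{main-thm-1}.
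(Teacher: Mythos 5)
Your proof is correct and is precisely the standard Lax--Milgram argument on $H^1(\Omega)/\mathbb{R}$ that the paper's one-line proof ("This is standard since the coefficient matrix $K$ is positive definite in each subdomain and thus in $\Omega$") alludes to. You have simply written out the details the author chose to omit.
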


\begin{proof}
This is standard since the coefficient matrix $K$ is positive definite in each subdomain $\Omega^\ell$
and thus in $\Omega$.
\end{proof}

For each $1\le \ell\le L$ and $0< \e<1$, let $\Omega^\ell_\e$ be the perforated domain defined by \eqref{subdomain}, using $Y_s^\ell$.
Let $\Omega_\e$ be given by \eqref{domain-e}.
Note that 
\begin{equation}
\partial \Omega_\e =\partial \Omega \cup \Gamma_\e,
\end{equation}
where $\Gamma_\e=\cup_{\ell=1}^L {\Gamma_\e^\ell} $ 
and $\Gamma_\e^\ell$ consists of the boundaries of holes $\e (Y_s^\ell +z)$  that are removed from $\Omega^\ell$.

\begin{lemma}\label{lemma-2.2}
Let $u \in H^1(\Omega_\e)$ with $u=0$ on $\Gamma_\e$.
Assume $\Gamma^\ell_\e \neq \emptyset$ for all $1\le \ell \le L$. Then
\begin{equation}\label{2.2-0}
\| u \|_{L^2(\Omega_\e)}
\le C \e \| \nabla u_\e \|_{L^2(\Omega_\e)}.
\end{equation}
\end{lemma}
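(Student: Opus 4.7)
The plan is a standard scaled-Poincar\'e argument, cell by cell, combined with a boundary-layer absorption for the incomplete cubes near $\partial\Omega$ and the interface $\Sigma$.

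First I establish a single-cell Poincar\'e inequality. Since each $Y_s^\ell$ is a Lipschitz open subset of $Y$ of positive measure and $Y_f^\ell = Y \setminus \overline{Y_s^\ell}$ is connected, the usual Poincar\'e inequality on Lipschitz domains gives
\begin{equation*}
\|v\|_{L^2(Y_f^\ell)} \le C \|\nabla v\|_{L^2(Y_f^\ell)}
\qquad \text{for every } v \in H^1(Y_f^\ell) \text{ with } v|_{\partial Y_s^\ell} = 0.
\end{equation*}
Rescaling by $\e$ and translating by $z \in \mathbb{Z}^d$ then yields, for every $z$ with $\e(Y+z) \subset \Omega^\ell$,
\begin{equation*}
\|u\|_{L^2(\e(Y_f^\ell + z))} \le C\e\, \|\nabla u\|_{L^2(\e(Y_f^\ell+z))},
\end{equation*}
since $u = 0$ on $\e(\partial Y_s^\ell + z) \subset \Gamma_\e^\ell$ by hypothesis. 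Summing this cell-wise estimate over all such $z$ and then over $\ell$ gives $\|u\|_{L^2(D_\e)} \le C\e\, \|\nabla u\|_{L^2(D_\e)}$, where $D_\e$ denotes the union of the fluid parts of all cells $\e(Y+z)$ that are fully contained in some $\Omega^\ell$.

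Next I treat the residual set $R_\e := \Omega_\e \setminus D_\e$, which by construction lies in a $C\e$-neighborhood of $\partial\Omega \cup \Sigma$. For $\e$ sufficiently small (guaranteed by the hypothesis $\Gamma_\e^\ell \neq \emptyset$ for every $\ell$ and the boundedness of each $\Omega^\ell$), the Lipschitz regularity of $\partial\Omega$ and of each $\partial\Omega^\ell$ ensures that every boundary cube $Q' = \e(Y+z')$ meeting some $\Omega^\ell$ can be paired with a full interior cube $Q \subset D_\e^\ell$ within distance $C\e$, so that $Q' \cup Q$ (joined by a short bridge if needed) forms a connected Lipschitz domain of diameter $O(\e)$ containing the hole $\e(\overline{Y_s^\ell} + z) \subset Q$. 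Applying the scaled Poincar\'e inequality on this domain with vanishing on the hole boundary yields
\begin{equation*}
\|u\|_{L^2(Q')} \le C\e\, \|\nabla u\|_{L^2(Q' \cup Q)}.
\end{equation*}
A finite-multiplicity covering argument then sums these inequalities to control $\|u\|_{L^2(R_\e)}$, and combining with the interior estimate yields \eqref{2.2-0}.

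The main obstacle is the boundary-layer step: because $u$ is not assumed to vanish on $\partial\Omega$, the estimate must be propagated from the interior via a pairing of boundary cubes with nearby full fluid cells. Justifying this pairing uniformly in $\e$, and verifying that the resulting covering has bounded multiplicity, relies crucially on the Lipschitz structure of $\partial\Omega$ and of each $\partial\Omega^\ell$, together with $\e$ being small relative to the diameter of each subdomain.
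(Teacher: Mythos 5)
Your proof is correct and, at the conceptual level, matches the paper's: the paper applies the single-subdomain version of this estimate (Lemma 2.2 of \cite{Shen-Darcy-1}) to each $\Omega_\e^\ell$ and sums, emphasizing precisely the point your boundary-layer pairing handles, namely that no vanishing of $u$ on $\partial\Omega^\ell$ is assumed. You are essentially reproving that cited lemma via cell-wise scaled Poincar\'e plus a bridge-to-interior-cube argument for the hole-free layer near $\partial\Omega\cup\Sigma$, so the mechanism is the same; the paper simply delegates the details to the reference.
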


\begin{proof}
It follows from Lemma 2.2 in \cite{Shen-Darcy-1} that for $1 \le \ell \le L$, 
$$
\| u\|^2_{L^2(\Omega_\e^\ell)} \le C  \e^2 \| \nabla u \|^2_{L^2(\Omega_\e^\ell)},
$$
which yields \eqref{2.2-0} by summation.
Note that we do not assume $u=0$ on $\partial \Omega^\ell$.
\end{proof}

From  now on we will assume that $\e>0$ is sufficiently small so that $\Gamma_\e^\ell \neq \emptyset$
for all $1\le  \ell \le L$. The main results in this paper are only relevant for small $\e$.

\begin{lemma}\label{lemma-2.3}
Let $\Omega$ be a bounded Lipschitz domain and $\Omega_\e$ be given by \eqref{domain-e}.
There exists a bounded linear operator,
\begin{equation}\label{R-1}
R_\e: H^1(\Omega; \R^d) \to H^1(\Omega_\e; \R^d), 
\end{equation}
such that
\begin{equation}\label{R-2}
\left\{
\aligned
& R_\e (u)=0 \quad \text{ on } \Gamma_\e \quad \text{ and } \quad R_\e (u)= u \quad \text{ on } \partial \Omega,\\
& R_\e (u) \in H_0^1(\Omega_\e; \R^d) \quad \text{ if } \  u\in H_0^1(\Omega; \R^d),\\
& R_\e (u)=u \quad \text{ in } \Omega \quad \text{ if }  \ u=0 \quad \text{ on } \Gamma_\e,\\
& \text{\rm div} (R_\e (u))=\text{\rm div}(u)   \quad  \text{ in } \Omega_\e \   \text{ if \ \ } \text{\rm div} (u)=0 \quad \text{ in } \Omega\setminus \Omega_\e,
\endaligned
\right.
\end{equation}
and
\begin{equation}\label{R-3}
\e \| \nabla R_\e (u)\|_{L^2(\Omega_\e)}
+ \| R_\e (u)\|_{L^2(\Omega_\e)}
\le C \left\{
\e \| \nabla u \|_{L^2(\Omega)}
+ \| u \|_{L^2(\Omega)} \right\}.
\end{equation}
Moreover,
\begin{equation}\label{R-4}
\| \text{\rm div} (R_\e (u))\|_{L^2(\Omega_\e)}
\le C \| \text{\rm div}(u)\|_{L^2(\Omega)}.
\end{equation}
\end{lemma}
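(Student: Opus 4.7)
The plan is to reduce the construction to a cell-level restriction operator and then patch across subdomains. Concretely, for each $1\le \ell\le L$ I would first build a linear operator
$$
T^\ell : H^1(Y;\R^d) \to H^1(Y_f^\ell;\R^d)
$$
with the properties (i) $T^\ell(v)=v$ on $\partial Y$, (ii) $T^\ell(v)=0$ on $\partial Y_s^\ell$, (iii) $\|T^\ell(v)\|_{H^1(Y_f^\ell)}\le C\|v\|_{H^1(Y)}$, and (iv) the compatibility identity
$$
\int_{Y_f^\ell}\text{div}(T^\ell(v))\,dy = \int_Y \text{div}(v)\,dy ,
$$
together with the pointwise statement $\text{div}(T^\ell(v))=\text{div}(v)$ on $Y_f^\ell$ whenever $\text{div}(v)$ vanishes on $Y_s^\ell$. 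Such a $T^\ell$ is the classical Tartar restriction operator: write $T^\ell(v)=v-w$, where $w\in H^1(Y_f^\ell;\R^d)$ solves a Stokes-type problem with $w=0$ on $\partial Y$, $w=v$ on $\partial Y_s^\ell$, and with a right-hand side adjusted via Bogovski\u\i's solution of $\text{div}\,\phi=g$ on $Y_f^\ell$ so that the divergence compatibility holds. Lipschitz regularity of $\partial Y_s^\ell$ and connectedness of $Y_f^\ell$ are exactly what make this solvable with a bounded $H^1$ estimate.

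Next I would scale and paste. For $u\in H^1(\Omega;\R^d)$ and each period cell $\e(Y+z)$ with $\e(Y+z)\subset\Omega^\ell$, define
$$
R_\e(u)(x) = \big(T^\ell\, v_{\e,z}\big)(x/\e - z), \qquad v_{\e,z}(y):=u(\e(y+z)),
$$
and set $R_\e(u)=u$ on every cell that is not entirely contained in any $\Omega^\ell$. Because perforations occur only in cells strictly interior to each $\Omega^\ell$, the unmodified cells contain no holes and $R_\e(u)$ is well defined on $\Omega_\e$. This also makes the multi-subdomain setting essentially a finite disjoint union of single-subdomain constructions: cells never straddle $\Sigma$ or $\partial\Omega$, so no patching across subdomains is needed and the properties of $R_\e$ follow cell by cell.

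The verifications then proceed as follows. The boundary properties in \eqref{R-2} are immediate: $R_\e(u)=0$ on $\Gamma_\e$ by (ii), and $R_\e(u)=u$ on $\partial\Omega$ because the cells meeting $\partial\Omega$ are unmodified. The third and fourth clauses of \eqref{R-2} are also built in: if $u$ already vanishes on $\Gamma_\e$ one verifies $T^\ell(v)=v$ on the relevant cells, and if $\text{div}(u)=0$ on $\Omega\setminus\Omega_\e$ the pointwise divergence identity above gives $\text{div}(R_\e u)=\text{div}(u)$ on each $\Omega_\e^\ell$. The estimate \eqref{R-3} follows by scaling: on each modified cell $\e(Y+z)$,
$$
\|R_\e u\|_{L^2}^2 + \e^2\|\nabla R_\e u\|_{L^2}^2
\le C\bigl(\|u\|_{L^2(\e(Y+z))}^2 + \e^2\|\nabla u\|_{L^2(\e(Y+z))}^2\bigr),
$$
with constant $C$ depending only on the $H^1$ bound of $T^\ell$; summation over $z$ and $\ell$ yields \eqref{R-3}. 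Finally, the divergence estimate \eqref{R-4} follows from (iv): on each modified cell the mean of $\text{div}(R_\e u)-\text{div}(u)$ on $Y_f^\ell$ vanishes and, by the Bogovski\u\i\ part of the construction, $\|\text{div}(R_\e u)\|_{L^2(\e(Y_f^\ell+z))} \le C\|\text{div}(u)\|_{L^2(\e(Y+z))}$.

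The main obstacle is the divergence-compatibility property of $T^\ell$: producing a linear, $H^1$-bounded extension of $v$ from $\partial Y$ to $Y_f^\ell$ that vanishes on $\partial Y_s^\ell$ \emph{and} matches divergences up to a constant. This requires Bogovski\u\i's operator on the Lipschitz domain $Y_f^\ell$, and this is where the Lipschitz assumption on $\partial Y_s^\ell$ (together with connectedness of $Y_f^\ell$) is essential; everything else is bookkeeping via scaling and the fact that cells are either fully interior to some $\Omega^\ell$ or entirely unperforated.
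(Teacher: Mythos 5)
Your approach is essentially the paper's: the paper also builds $R_\e$ cell by cell via a Stokes Dirichlet problem with prescribed divergence (Tartar's restriction operator), and your fixed-scale operator $T^\ell$ followed by $\e$-scaling is the standard equivalent presentation. Both exploit the same point that, since holes only occur in cells strictly interior to a single $\Omega^\ell$, the multi-subdomain case reduces to running the one-subdomain construction independently in each $\Omega^\ell$.

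One small slip in your justification of \eqref{R-4}: you assert that the mean of $\text{\rm div}(R_\e u)-\text{\rm div}(u)$ over $Y_f^\ell$ vanishes, but by your own property (iv) together with $\int_Y \text{\rm div}(v) = \int_{Y_f^\ell}\text{\rm div}(v) + \int_{Y_s^\ell}\text{\rm div}(v)$, that mean equals $|Y_f^\ell|^{-1}\int_{Y_s^\ell}\text{\rm div}(v)\,dy$, which need not be zero. The estimate \eqref{R-4} is still immediate: in the Tartar construction $\text{\rm div}(R_\e u)-\text{\rm div}(u)$ is exactly that constant on each fluid cell, and its $L^2(Y_f^\ell)$ norm is bounded by $C\|\text{\rm div}(v)\|_{L^2(Y)}$, so summing over cells and undoing the scaling gives the claim.
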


\begin{proof}
A proof for the case $L=1$, which is similar to that of a lemma due to Tartar (in an appendix of \cite{Sanchez-1980}),  
 may be found in \cite[Lemma 2.3]{Shen-Darcy-1}.
 Also see \cite{LA-1990, Allaire-89}.
The same proof works equally well for the case $L\ge 2$.
Indeed, let $u\in H^1(\Omega; \R^d)$.
For each $\e (Y+z)\subset \Omega^\ell$ with $1\le \ell\le L$ and $z\in \mathbb{Z}^d$, we define
$R_\e (u) $ on $\e (Y_f^\ell +z) $ by the Dirichlet problem,
\begin{equation}\label{R-5}
\left\{
\aligned
-\e^2 \Delta R_\e (u) +\nabla q &= -\e^2 \Delta u & \quad & \text{ in } \e (Y_f^\ell +z),\\
\text{\rm div}(R_\e (u)) &= \text{\rm div} (u) +\frac{1}{|\e (Y_f^\ell +z)|}
\int_{\e (Y_s^\ell +z)} \text{\rm div} (u)\, dx & \quad & \text{ in } \e (Y_f^\ell +z),\\
R_\e (u) & =0 & \quad & \text{ on } \partial (\e (Y_s^\ell+z)),\\
R_\e (u) & = u & \quad & \text{ on } \partial (\e (Y+z)).
\endaligned
\right.
\end{equation}
If $x\in \Omega_\e$ and $x\notin \e (Y_f +z)$
for any $\e (Y+z) \subset \Omega^\ell$, we let $R_\e (u)=u$. 
\end{proof}

\begin{lemma}\label{lemma-div}
Let $f\in L^2(\Omega_\e)$ with $\int_{\Omega_\e} f \, dx =0$.
Then there exists $u_\e \in H_0^1(\Omega_\e; \R^d)$ such that 
$ \text{\rm div} (u_\e)=f$ in $\Omega_\e$ and
\begin{equation}\label{div-1a}
\| u_\e \|_{L^2(\Omega_\e)} +
\e \| \nabla u_\e \|_{L^2(\Omega_\e)}
\le C \| f \|_{L^2(\Omega_\e)}.
\end{equation}
\end{lemma}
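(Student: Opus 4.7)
The plan is to reduce this to two ingredients that are already essentially available: the classical Bogovski\u{\i} solution of $\text{\rm div}(v)=g$ on the full Lipschitz domain $\Omega$, and the restriction operator $R_\e$ of Lemma \ref{lemma-2.3}, which transfers a vector field on $\Omega$ to one on $\Omega_\e$ while preserving both the vanishing boundary condition and (under a mild hypothesis) the divergence.

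\medskip

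\noindent\textbf{Step 1: Extend $f$ and solve divergence on $\Omega$.} Let $\widetilde f$ denote the extension of $f$ to $\Omega$ by zero. Since $\int_{\Omega_\e} f\,dx=0$, we have $\int_\Omega \widetilde f\,dx=0$. Because $\Omega$ is a bounded Lipschitz domain, the Bogovski\u{\i} construction (standard for Lipschitz domains) produces some $v\in H_0^1(\Omega;\R^d)$ with
\begin{equation*}
\text{\rm div}(v)=\widetilde f \quad\text{in } \Omega,\qquad
\|\nabla v\|_{L^2(\Omega)}\le C\,\|\widetilde f\,\|_{L^2(\Omega)}=C\,\|f\|_{L^2(\Omega_\e)}.
\end{equation*}
By Poincar\'e's inequality on $\Omega$ this also yields $\|v\|_{L^2(\Omega)}\le C\|f\|_{L^2(\Omega_\e)}$.

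\medskip

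\noindent\textbf{Step 2: Restrict via $R_\e$.} Set $u_\e = R_\e (v)$, where $R_\e$ is the operator from Lemma \ref{lemma-2.3}. Because $v\in H_0^1(\Omega;\R^d)$, the second line of \eqref{R-2} gives $u_\e\in H_0^1(\Omega_\e;\R^d)$. Next observe that $\widetilde f\equiv 0$ on $\Omega\setminus\Omega_\e$ (the union of the removed holes), so $\text{\rm div}(v)=0$ on $\Omega\setminus\Omega_\e$; the last line of \eqref{R-2} then gives
\begin{equation*}
\text{\rm div}(u_\e)=\text{\rm div}(R_\e(v))=\text{\rm div}(v)=\widetilde f=f \quad\text{in }\Omega_\e,
\end{equation*}
which is the required divergence identity.

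\medskip

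\noindent\textbf{Step 3: Norm estimate.} Applying \eqref{R-3} to $v$ and combining with the bounds from Step 1,
\begin{equation*}
\|u_\e\|_{L^2(\Omega_\e)}+\e\|\nabla u_\e\|_{L^2(\Omega_\e)}
\le C\bigl(\|v\|_{L^2(\Omega)}+\e\|\nabla v\|_{L^2(\Omega)}\bigr)
\le C(1+\e)\|f\|_{L^2(\Omega_\e)}\le C\|f\|_{L^2(\Omega_\e)},
\end{equation*}
since $\e<1$. This is precisely \eqref{div-1a}.

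\medskip

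There is no real obstacle: the only point that needs care is the compatibility condition in Step 2, namely that the right-hand side $\widetilde f$ vanishes on the holes so that the divergence-preserving clause of \eqref{R-2} can be invoked. This is built in by extending $f$ by zero, which is legitimate because the mean-zero condition $\int_{\Omega_\e}f=0$ is already stated on the perforated domain. The rest is bookkeeping: Bogovski\u{\i} handles solvability on $\Omega$, Poincar\'e makes the $L^2$ bound on $v$ free, and $R_\e$ transports everything to $\Omega_\e$ with the $\e$-weighted estimate that exactly matches the form of \eqref{div-1a}.
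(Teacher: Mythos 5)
Your proof is correct and follows essentially the same route as the paper: extend $f$ by zero to $\Omega$, solve the divergence equation on $\Omega$ with a Bogovski\u{\i}-type operator, and transport the solution to $\Omega_\e$ via the restriction operator $R_\e$ from Lemma \ref{lemma-2.3}, using the divergence-preserving property in \eqref{R-2} and the scaled estimate \eqref{R-3}. The only cosmetic difference is that you name the Bogovski\u{\i} construction explicitly, while the paper simply asserts the existence of a suitable $u \in H_0^1(\Omega;\R^d)$.
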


\begin{proof}
Let $F$ be the zero extension of $f$ to $\Omega$.
Since $F\in L^2(\Omega)$ and $\int_\Omega F\, dx =0$,
there exists $u\in H_0^1(\Omega; \R^d)$ such that 
div$(u)=F$ in $\Omega$ and
$\| u \|_{L^2(\Omega)} + \|\nabla u \|_{L^2(\Omega)} \le C \| F \|_{L^2(\Omega)}$.
Let $u_\e=R_\e (u)$.
Then $u_\e\in H_0^1(\Omega_\e, \R^d)$,  and by \eqref{R-3}, 
$$
\aligned
\e \|\nabla u_\e \|_{L^2(\Omega_\e)}
+\| u_\e \|_{L^2(\Omega_\e)}
 & \le  C \left\{ \e \|\nabla u\|_{L^2(\Omega)} + \| u \|_{L^2(\Omega)} \right\}\\
 & \le C \|f \|_{L^2(\Omega_\e)}.
 \endaligned
 $$
Since div$(u)=F=0$ in $\Omega \setminus \Omega_\e$, by the last  line in \eqref{R-2}, we obtain 
div$(u_\e)=\text{\rm div} (u)=f$ in $\Omega_\e$.
\end{proof}

For $p\in L^2(\Omega_\e)$, as in \cite{LA-1990}, we define an extension  $P$ of $p$ to $L^2(\Omega)$ by 
\begin{equation}\label{ext-1}
P (x)
=\left\{
\aligned
& p(x) & \quad &\text{ if } x\in \Omega_\e, \\
& \fint_{\e (Y_f^\ell+z) } p & \quad  & \text{ if } x\in \e (Y_s^\ell +z) \subset \e (Y+z)\subset \Omega^\ell \text{ for some } 1\le \ell\le L \text{ and } z\in \mathbb{Z}^d.
\endaligned
\right.
\end{equation}

\begin{lemma}\label{lemma-dual}
Let $p\in L^2(\Omega_\e)$ and $P$ be its extension given by \eqref{ext-1}.
Then
\begin{equation}\label{ext-2}
\langle \nabla p, R_\e (u) \rangle_{H^{-1}(\Omega_\e) \times H_0^1(\Omega_\e)}
=\langle \nabla P, u \rangle_{H^{-1}(\Omega) \times H_0^1(\Omega)},
\end{equation}
where $u\in H_0^1(\Omega; \R^d)$ and $R_\e (u)$ is given by Lemma \ref{lemma-2.3}.
\end{lemma}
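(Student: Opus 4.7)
The plan is to unwind both duality pairings using the distributional definition of the gradient, so that the identity reduces to the equality of two integrals of the form $\int p\cdot(\text{divergence})$. Concretely, since $R_\e(u)\in H^1_0(\Omega_\e;\R^d)$ (by the second line of \eqref{R-2}) and $u\in H^1_0(\Omega;\R^d)$, one has
\begin{equation*}
\langle \nabla p, R_\e(u)\rangle_{H^{-1}(\Omega_\e)\times H^1_0(\Omega_\e)} = -\int_{\Omega_\e} p\,\text{\rm div}(R_\e(u))\,dx,
\qquad
\langle \nabla P, u\rangle_{H^{-1}(\Omega)\times H^1_0(\Omega)} = -\int_{\Omega} P\,\text{\rm div}(u)\,dx,
\end{equation*}
so it suffices to show $\int_{\Omega_\e}p\,\text{\rm div}(R_\e(u))\,dx=\int_{\Omega}P\,\text{\rm div}(u)\,dx$.

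Next I would split both sides cell by cell. For the right-hand side, decompose $\Omega=\Omega_\e\cup(\Omega\setminus\Omega_\e)$, where by definition \eqref{domain-e} the set $\Omega\setminus\Omega_\e$ is the disjoint union, over $1\le\ell\le L$ and $z\in\mathbb{Z}^d$ with $\e(Y+z)\subset\Omega^\ell$, of the solid cells $\e(Y_s^\ell+z)$. Using the definition \eqref{ext-1} of $P$ (namely $P=p$ on $\Omega_\e$ and $P$ equal to the constant $\fint_{\e(Y_f^\ell+z)}p$ on each solid hole), this gives
\begin{equation*}
\int_\Omega P\,\text{\rm div}(u)\,dx
= \int_{\Omega_\e} p\,\text{\rm div}(u)\,dx + \sum_{\ell,z}\Big(\fint_{\e(Y_f^\ell+z)}p\Big)\int_{\e(Y_s^\ell+z)}\text{\rm div}(u)\,dx.
\end{equation*}

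For the left-hand side I would use the construction of $R_\e(u)$ in \eqref{R-5}: on each fluid cell $\e(Y_f^\ell+z)$ with $\e(Y+z)\subset\Omega^\ell$ one has $\text{\rm div}(R_\e(u))=\text{\rm div}(u)+|\e(Y_f^\ell+z)|^{-1}\int_{\e(Y_s^\ell+z)}\text{\rm div}(u)\,dx$, while on the complementary portion of $\Omega_\e$ (the interface $\Sigma$ and any cell not entirely contained in a single $\Omega^\ell$) one has $R_\e(u)=u$ and so $\text{\rm div}(R_\e(u))=\text{\rm div}(u)$. Multiplying by $p$ and integrating, the extra constant term on each fluid cell contributes exactly $(\fint_{\e(Y_f^\ell+z)}p)\int_{\e(Y_s^\ell+z)}\text{\rm div}(u)\,dx$, which is precisely the term appearing in the right-hand side. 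Summing over all cells gives
\begin{equation*}
\int_{\Omega_\e}p\,\text{\rm div}(R_\e(u))\,dx = \int_{\Omega_\e}p\,\text{\rm div}(u)\,dx + \sum_{\ell,z}\Big(\fint_{\e(Y_f^\ell+z)}p\Big)\int_{\e(Y_s^\ell+z)}\text{\rm div}(u)\,dx,
\end{equation*}
matching the expression for $\int_\Omega P\,\text{\rm div}(u)\,dx$ obtained above.

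This is really a bookkeeping computation rather than an argument with a substantial obstacle; the only thing to be careful about is making sure the decomposition of $\Omega_\e$ is exhaustive and disjoint. In particular, one must keep track that the ``leftover'' region (the interface $\Sigma$ together with the neighborhood of $\partial\Omega$ and the strips near $\partial\Omega^\ell$ where cells are not fully contained in a single subdomain) contributes identically to both sides because $R_\e(u)=u$ there and $P=p$ there, so these pieces cancel out of the comparison without producing any additional boundary contributions. Once the cell-wise computation is tidied up, the identity follows immediately.
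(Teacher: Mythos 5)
Your proposal is correct, but it takes a genuinely different and more direct route than the paper. The paper's proof is an indirect duality argument: it first uses the boundedness estimate \eqref{R-4} to show that $u\mapsto\langle\nabla p, R_\e(u)\rangle$ defines a bounded functional on $H_0^1(\Omega;\R^d)$, then observes this functional vanishes on divergence-free fields, invokes the De Rham-type result to conclude the functional is $\nabla Q$ for some $Q\in L^2(\Omega)$, and finally identifies $Q$ with $P$ in three steps (testing against $u\in H_0^1(\Omega_\e)$ to get $Q=p$ on $\Omega_\e$; testing against $u$ supported in a solid cell to get $Q$ constant there; and testing against $u$ supported in a full cell $\e(Y+z)$ to pin down the constant as the fluid-cell average). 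Your approach skips the intermediate abstraction entirely and computes both duality pairings as $-\int p\,\text{\rm div}(\cdot)$, then verifies the resulting integral identity cell by cell using the explicit formula for $\text{\rm div}(R_\e(u))$ from \eqref{R-5} together with the definition \eqref{ext-1} of $P$. Both proofs ultimately rely on the explicit construction of $R_\e$ from \eqref{R-5}, but yours uses it upfront as a direct computation, whereas the paper deploys it only at the end to identify an already-existing abstract object $Q$. The paper's route is more modular (existence of a well-defined ``dual extension'' $Q$ is established before knowing what it is, which is philosophically cleaner and closer to the original argument in the cited literature), while yours is shorter and makes the role of the constant term in the divergence equation of \eqref{R-5} transparent: it is engineered precisely so that the extra contribution on each fluid cell equals $(\fint_{\e(Y_f^\ell+z)}p)\int_{\e(Y_s^\ell+z)}\text{\rm div}(u)$, matching the definition of $P$. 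Your bookkeeping of the leftover region near $\partial\Omega^\ell$ and $\Sigma$, where $R_\e(u)=u$ and $P=p$, is also handled correctly.
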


\begin{proof}
We use an argument found  in \cite{LA-1990, Allaire-89, Allaire-1991}.
Note that if $u \in H_0^1(\Omega; \R^d)$, we have $R_\e (u) \in H_0^1(\Omega_\e; \R^d)$ and 
$$
\aligned
 |\langle \nabla p, R_\e (u) \rangle_{H^{-1} (\Omega_\e) \times H^1_0(\Omega_\e)} |
& = |\langle p, \text{\rm div} (R_\e (u) ) \rangle _{L^2(\Omega_\e)\times L^2(\Omega_\e)}  |\\
& \le \| p \|_{L^2(\Omega_\e)} \| \text{\rm div} (R_\e (u)) \|_{L^2(\Omega_\e)}\\
& \le C \| p \|_{L^2(\Omega_\e)}
\| \text{\rm div}(u)  \|_{L^2(\Omega)},
\endaligned
$$
where we have used the estimate \eqref{R-4} for the last inequality.
Thus  there exists  $\Lambda \in H^{-1} (\Omega; \R^d)$ such that 
$$
\langle \nabla p, R_\e (u) \rangle_{H^{-1}(\Omega_\e) \times H_0^1(\Omega_\e)}
=\langle \Lambda, u \rangle_{H^{-1}(\Omega) \times H_0^1(\Omega)}
$$
for any $u\in H_0^1(\Omega; \R^d)$.
Since $\langle \Lambda, u\rangle =0$ if div$(u)=0$ in $\Omega$,
it follows that $\Lambda =\nabla Q$ for some $Q\in L^2(\Omega)$.

Next, using the fact that $R_\e (u)=u$ for $u\in H_0^1(\Omega_\e; \R^d)$,
we obtain 
$$
\langle \nabla p-\nabla Q, u \rangle_{H^{-1}(\Omega_\e) \times H_0^1(\Omega_\e)} =0
$$
for any $u \in H_0^1(\Omega_\e; \R^d)$.
This implies that $p-Q$ is constant in $\Omega_\e$.
Since $Q$ is only determined up to a constant, we may assume that $Q=p$ in $\Omega_\e$.
Moreover, we note that if $\e (Y +z)\subset \Omega^\ell$  for some $1\le \ell \le L$ and $z\in \mathbb{Z}^d$, 
and $u \in C_0^1(\e (Y_s^\ell+z), \R^d)$, then  $R_\e (u)=0$ in $\Omega_\e$.
It follows  that $\nabla Q=0$  in $\e (Y_s^\ell +z)$.
Thus $Q$ is constant in each $\e (Y_s^\ell+z)$.

Finally,   for any $u \in C_0^1(\e (Y+z); \R^d)$ with $\e (Y+z) \subset \Omega^\ell$, 
we have 
$$
R_\e (u) \in H_0^1 (\e (Y_f^\ell+z); \R^d),
$$
 and 
by \eqref{R-5},
$$
\text{\rm div} (R_\e (u))
=\text{\rm div} (u) +\frac{1}{|\e (Y_f^\ell +z)| }
\int_{\e (Y_s^\ell +z)} \text{\rm div} (u) \, dx
$$
in $\e (Y_f^\ell+z)$. 
This, together with 
$$
\int_{\e (Y_f^\ell +z)} p \cdot \text{\rm div} (R_\e (u))\, dx
=\int_{\e (Y+z)} Q \cdot \text{\rm div} (u)\, dx
$$
and the fact that $Q=p$ in $\Omega_\e$, yields 
$$
\int_{\e (Y_s^\ell +z)} 
\Big( Q-\fint_{\e (Y_f^\ell +z)} p \Big) \text{\rm div} (u)\, dx =0.
$$
Consequently,
$$
Q=\fint_{\e (Y_f^\ell +z)} p \qquad \text{ in } \e (Y_s^\ell +z).
$$
As a result, we have proved that $Q=P$, an extension of $p$ given by \eqref{ext-1}.
\end{proof}

%%%%%%%%%%%%%%%%%%%%%%%

\section{Energy estimates}\label{section-E}

Let $\Omega_\e$ be given by \eqref{domain-e}.
Recall that $\partial \Omega_\e =\partial \Omega \cup \Gamma_\e$, where $\Gamma_\e$ consists of  the boundaries of the
holes of size $\e$  that are removed from $\Omega$.
In this section we establish the energy estimates for the Dirichlet problem,
\begin{equation}\label{D-3-0}
\left\{
\aligned
-\e^2 \Delta u_\e +\nabla p_\e & =f +\e\,  \text{\rm div} (F) & \quad & \text{ in } \Omega_\e,\\
\text{\rm div} (u_\e) & = g & \quad & \text{ in } \Omega_\e,\\
u_\e & =0 & \quad & \text{ on } \Gamma_\e,\\
u_\e & =h & \quad & \text{ on } \partial \Omega,
\endaligned
\right.
\end{equation}
where $(g, h)$ satisfies the compatibility condition,
\begin{equation}\label{com}
\int_{\Omega_\e}g\, dx  =\int_{\partial \Omega} h \cdot n \, d\sigma.
\end{equation}
Throughout this section we assume that $\Omega$, $\Omega^\ell$ and $Y_s^\ell$ for $1\le \ell \le L$ are domains with Lipschitz boundaries.
We use $L^2_0(\Omega_\e)$ to denote the subspace of  functions in $L^2(\Omega_\e)$ with mean value zero.

\begin{thm}\label{e-thm}
Let $f\in L^2(\Omega_\e; \R^d)$ and $F \in L^2(\Omega_\e; \R^{d\times d})$.
Let $g \in L^2(\Omega_\e)$ and $h\in H^{1/2} (\partial \Omega; \R^d)$ satisfy the condition \eqref{com}.
Let $(u_\e, p_\e)\in H^1(\Omega_\e; \R^d) \times L^2_0(\Omega_\e)$ be a weak solution of \eqref{D-3-0}.
Then
\begin{equation}\label{3.1-0}
\aligned
 & \e \| \nabla u_\e \|_{L^2(\Omega_\e)}
+ \| u_\e \|_{L^2(\Omega_\e)}
+ \| p_\e \|_{L^2(\Omega_\e)}\\
& \le C \left\{
\| f\|_{L^2(\Omega_\e)} + \| F \|_{L^2(\Omega_\e)}
+\| g \|_{L^2(\Omega_\e)}
+ \| H \|_{L^2(\Omega)} +\| \text{\rm div} (H) \|_{L^2(\Omega)}
+ \e \| \nabla H \|_{L^2(\Omega)} \right \},
\endaligned
\end{equation}
where $H$ is any function in $H^1(\Omega; \R^d)$ with the property  $H=h$ on $\partial\Omega$.
\end{thm}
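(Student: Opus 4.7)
The plan is to reduce to zero boundary data using the extension operator $R_\e$ of Lemma \ref{lemma-2.3}, run a coercivity (energy) argument on the reduced system, and then close the pressure bound via the Bogovskii-type construction of Lemma \ref{lemma-div}. The $\e$-scaled Poincar\'e inequality of Lemma \ref{lemma-2.2} will let me absorb $\|u_\e\|_{L^2(\Omega_\e)}$ into $\e\|\nabla u_\e\|_{L^2(\Omega_\e)}$, which is what makes all the estimates dimensionally consistent in $\e$. Throughout, $\mathcal{D}$ denotes the right-hand side of \eqref{3.1-0}.

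First I would set $U_\e = u_\e - R_\e(H)$, which lies in $H_0^1(\Omega_\e;\R^d)$ by the first two lines of \eqref{R-2}. Testing the momentum equation in \eqref{D-3-0} against $U_\e$ and integrating by parts gives
\begin{equation*}
\e^2 \int_{\Omega_\e} |\nabla u_\e|^2\, dx
=  \e^2\!\int_{\Omega_\e}\!\! \nabla u_\e : \nabla R_\e(H)\, dx
- \int_{\Omega_\e}\!\! p_\e\bigl(g - \text{\rm div}(R_\e(H))\bigr) dx
+ \int_{\Omega_\e}\!\! f\cdot U_\e\, dx
- \e\!\int_{\Omega_\e}\!\! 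F:\nabla U_\e\, dx,
\end{equation*}
where I used $\text{\rm div}(u_\e)=g$. Using \eqref{R-3}--\eqref{R-4} to bound $R_\e(H)$ by the $H$-norms appearing in $\mathcal{D}$, Cauchy-Schwarz, and Lemma \ref{lemma-2.2} (applicable since $u_\e=0$ on $\Gamma_\e$), I would arrive at
\begin{equation*}
\e^2 \|\nabla u_\e\|_{L^2(\Omega_\e)}^2
\le C\,\e\|\nabla u_\e\|_{L^2(\Omega_\e)}\, \mathcal{D}
+ C\|p_\e\|_{L^2(\Omega_\e)}\,\mathcal{D}.
\end{equation*}

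To control $p_\e$, I would invoke Lemma \ref{lemma-div} (valid because $p_\e\in L^2_0(\Omega_\e)$) to obtain $\phi_\e \in H_0^1(\Omega_\e;\R^d)$ with $\text{\rm div}(\phi_\e)=p_\e$ and $\|\phi_\e\|_{L^2(\Omega_\e)} + \e\|\nabla\phi_\e\|_{L^2(\Omega_\e)} \le C\|p_\e\|_{L^2(\Omega_\e)}$. Testing the momentum equation against $\phi_\e$ and using $\int_{\Omega_\e} p_\e\,\text{\rm div}(\phi_\e)\, dx = \|p_\e\|_{L^2(\Omega_\e)}^2$ gives
\begin{equation*}
\|p_\e\|_{L^2(\Omega_\e)} \le C\bigl(\e\|\nabla u_\e\|_{L^2(\Omega_\e)} + \|f\|_{L^2(\Omega_\e)} + \|F\|_{L^2(\Omega_\e)}\bigr).
\end{equation*}
Substituting this into the previous display and applying Young's inequality to absorb all $\e\|\nabla u_\e\|_{L^2}$ contributions into the left-hand side yields $\e\|\nabla u_\e\|_{L^2(\Omega_\e)} \le C\mathcal{D}$; the bounds on $\|u_\e\|_{L^2(\Omega_\e)}$ (via Lemma \ref{lemma-2.2} again) and on $\|p_\e\|_{L^2(\Omega_\e)}$ then follow immediately.

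I do not expect a serious obstacle here: this is the standard Stokes coercivity scheme transposed to the $\e$-perforated setting. The only technical point requiring care is tracking the $\e$-weights so that the Poincar\'e-type inequality and the $\e$-weighted Bogovskii estimate deliver exactly matching powers of $\e$ when Young's inequality is applied; given Lemmas \ref{lemma-2.2}, \ref{lemma-2.3}, and \ref{lemma-div}, no additional ingredient is needed.
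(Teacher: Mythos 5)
Your proposal is correct and follows essentially the same route as the paper: reduce to homogeneous boundary data via the operator $R_\e$ of Lemma \ref{lemma-2.3}, control the pressure via Lemma \ref{lemma-div}, and close with the $\e$-scaled Poincar\'e inequality of Lemma \ref{lemma-2.2} and Young's inequality. The paper organizes this as Step 1 (pressure), Step 2 ($h=0$), Step 3 (general $h$ via $w_\e = u_\e - R_\e(H)$), whereas you merge Steps 2 and 3 by testing directly against $U_\e = u_\e - R_\e(H)$; the only flaw is a harmless sign slip in front of the $\int p_\e\bigl(g-\text{\rm div}(R_\e(H))\bigr)\,dx$ term in your first display, which should be $+$, but this does not affect the estimate.
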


\begin{proof}
This theorem was proved in \cite[Section 3]{Shen-Darcy-1} for the case $L=1$.
The proof for the case $L\ge 2$ is similar.
We provide a proof here for the reader's convenience.

Step 1. We  show that
\begin{equation}\label{3.1-1}
\| p_\e\|_{L^2(\Omega_\e)}
\le C \left\{ \e \|\nabla u_\e \|_{L^2(\Omega_\e)}
+ \| f\|_{L^2(\Omega_\e)} + \| F \|_{L^2(\Omega_\e)} \right\}.
\end{equation}
To this end we use Lemma \ref{lemma-div} to find $v_\e\in H_0^1(\Omega_\e; \R^d)$ such that
$\text{\rm div} (v_\e)=p_\e$ in $\Omega_\e$ and
\begin{equation}\label{3.1-2}
\e \| \nabla v_\e \|_{L^2(\Omega_\e)}
+ \| v_\e \|_{L^2(\Omega_\e)}
\le C \| p_\e \|_{L^2(\Omega_\e)}.
\end{equation}
By using $v_\e$ as a test function we obtain 
$$
\aligned
\| p_\e\|_{L^2(\Omega_\e)}^2
 & \le \e^2 \|\nabla u_\e \|_{L^2(\Omega_\e)}
 \|\nabla v_\e\|_{L^2(\Omega_\e)}
 + \| f\|_{L^2(\Omega_\e)} \| v_\e\|_{L^2(\Omega_\e)}
 + \e \| F \|_{L^2(\Omega_\e)} \| \nabla v_\e \|_{L^2(\Omega_\e)}\\
 & \le C \| p_\e \|_{L^2(\Omega_\e)}
 \left\{ \e \| \nabla u_\e\|_{L^2(\Omega_\e)}
 + \| f\|_{L^2(\Omega_\e)} + \| F \|_{L^2(\Omega_\e)} \right\},
 \endaligned
 $$
 where we have used \eqref{3.1-2} for the last inequality.
This yields \eqref{3.1-1}.

Step 2. We prove \eqref{3.1-0} in the case $h=0$.
In this case we may use $u_\e\in H_0^1(\Omega_\e; \R^d)$ as a test function to obtain
$$
\aligned
\e^2 \|\nabla u_\e\|^2_{L^2(\Omega_\e)}
\le \| p_\e \|_{L^2(\Omega_\e)} \| g\|_{L^2(\Omega_\e)}
+ \| f\|_{L^2(\Omega_\e)} \| u_\e\|_{L^2(\Omega_\e)}
+ \e \| F \|_{L^2(\Omega_\e)} \| \nabla u_\e \|_{L^2(\Omega_\e)}.
\endaligned
$$
By using the Cauchy inequality as well as the estimate 
$\|u_\e\|_{L^2(\Omega_\e)} \le C \e \| \nabla u_\e\|_{L^2(\Omega_\e)}$,
we deduce that
$$
\| u_\e\|_{L^2(\Omega_\e)}
+ \e \|\nabla u_\e\|_{L^2(\Omega_\e)}
\le C \left\{ \| p_\e\|^{1/2} _{L^2(\Omega_\e)}  \| g\|^{1/2}_{L^2(\Omega_\e)}
+ \| f\|_{L^2(\Omega_\e)}
+ \| F \|_{L^2(\Omega_\e)} \right\}.
$$
This, together with \eqref{3.1-1}, gives \eqref{3.1-0} for the case $h=0$.

Step 3. We consider the general case $h \in H^{1/2}(\partial\Omega; \R^d)$.
Let $H$ be a function in $ H^1(\Omega; \R^d)$ such that $H=h$ on $\partial\Omega$.
Let $w_\e = u_\e -R_\e (H)$, where $R_\e(H)$ is given by Lemma \ref{lemma-2.3}.
Then $w_\e\in H_0^1(\Omega_\e; \R^d)$ and 
$$
\left\{
\aligned
-\e^2 \Delta w_\e +\nabla p_\e & = f+\e\,  \text{\rm div}(F) +\e^2 \Delta R_\e (H),\\
\text{\rm div} (w_\e) & =g-\text{\rm div} (R_\e (H)),
\endaligned
\right.
$$
in $\Omega_\e$.
By Step 2 we obtain 
$$
\aligned
&\e \|\nabla w_\e \|_{L^2(\Omega_\e)} + \|w_\e \|_{L^2(\Omega_\e)}
+ \| p_\e \|_{L^2(\Omega_\e)}\\
& \le C 
\left\{
\| f\|_{L^2(\Omega_\e)} + \| F \|_{L^2(\Omega_\e)} + \e \|\nabla R_\e(H)\|_{L^2(\Omega_\e)} 
+ \| g \|_{L^2(\Omega_\e)} + \| \text{\rm div} (R_\e (H))\|_{L^2(\Omega_\e)}
\right\}.
\endaligned
$$
It follows that
$$
\aligned
&\e \|\nabla u_\e \|_{L^2(\Omega_\e)} + \|u_\e \|_{L^2(\Omega_\e)}
+ \| p_\e \|_{L^2(\Omega_\e)}\\
& \le C 
\Big\{
\| f\|_{L^2(\Omega_\e)} + \| F \|_{L^2(\Omega_\e)} +  \| g \|_{L^2(\Omega_\e)} \\
& \qquad\qquad  + \e \|\nabla R_\e(H)\|_{L^2(\Omega_\e)} 
 +\| R_\e (H)\|_{L^2(\Omega_\e)} 
+ \| \text{\rm div} (R_\e (H))\|_{L^2(\Omega_\e)}
\Big\}\\
&\le C \Big\{
\| f\|_{L^2(\Omega_\e)} + \| F \|_{L^2(\Omega_\e)} +  \| g \|_{L^2(\Omega_\e)} 
 + \e \|\nabla H \|_{L^2(\Omega)} 
 +\| H \|_{L^2(\Omega)} 
+ \| \text{\rm div} (H)\|_{L^2(\Omega)}
\Big\},
\endaligned
$$
where we have used estimates \eqref{R-3} and \eqref{R-4} for the last inequality.
\end{proof}

\begin{cor}\label{cor-e}
Let $(u_\e, p_\e)$ be the same as in Theorem \ref{e-thm}.
Then
\begin{equation}\label{3.2-0}
\aligned
&\e \|\nabla u_\e \|_{L^2(\Omega_\e)} + \|u_\e \|_{L^2(\Omega_\e)}
+ \| p_\e \|_{L^2(\Omega_\e)}\\
& \le C 
\Big\{
\| f\|_{L^2(\Omega_\e)} + \| F \|_{L^2(\Omega_\e)} +  \| g \|_{L^2(\Omega_\e)} 
+ \| h \|_{L^2(\partial \Omega)} + \e  \| h \|_{H^{1/2} (\partial \Omega)} \Big\}.
\endaligned
\end{equation}
\end{cor}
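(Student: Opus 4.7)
}
The plan is to deduce \eqref{3.2-0} from Theorem \ref{e-thm} by making a judicious choice of extension $H\in H^1(\Omega;\R^d)$ of $h$. Since the right-hand side of \eqref{3.1-0} depends on $H$ only through the combination $\|H\|_{L^2(\Omega)} + \|\text{\rm div}(H)\|_{L^2(\Omega)} + \e\|\nabla H\|_{L^2(\Omega)}$, it suffices to exhibit $H$ with $H|_{\partial\Omega}=h$ and
\begin{equation*}
\|H\|_{L^2(\Omega)} + \|\text{\rm div}(H)\|_{L^2(\Omega)} + \e\|\nabla H\|_{L^2(\Omega)} \le C\bigl(\|h\|_{L^2(\partial\Omega)} + \e\|h\|_{H^{1/2}(\partial\Omega)}\bigr). \tag{$\ast$}
\end{equation*}
Substituting such an $H$ into \eqref{3.1-0} then immediately yields \eqref{3.2-0}.

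To construct $H$ I would proceed in two steps. First, reduce to zero net flux: fix once and for all a smooth vector field $V_0\in C^\infty(\overline\Omega;\R^d)$ with $\int_{\partial\Omega} V_0\cdot n\,d\sigma=1$, put $\alpha=\int_{\partial\Omega} h\cdot n\,d\sigma$ (so $|\alpha|\le C\|h\|_{L^2(\partial\Omega)}$), and set $\tilde h = h - \alpha\, V_0|_{\partial\Omega}\in H^{1/2}(\partial\Omega;\R^d)$. Then $\int_{\partial\Omega}\tilde h\cdot n\,d\sigma=0$, and both $\|\tilde h\|_{L^2(\partial\Omega)}$ and $\|\tilde h\|_{H^{1/2}(\partial\Omega)}$ are controlled by the corresponding norms of $h$. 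Second, solve the homogeneous Stokes Dirichlet problem
$-\Delta\tilde H+\nabla\tilde q=0$, $\text{\rm div}(\tilde H)=0$ in $\Omega$ with $\tilde H|_{\partial\Omega}=\tilde h$. The standard energy estimate gives $\|\tilde H\|_{H^1(\Omega)}\le C\|\tilde h\|_{H^{1/2}(\partial\Omega)}\le C\|h\|_{H^{1/2}(\partial\Omega)}$, while a duality argument against homogeneous Stokes solutions with $L^2(\Omega)$ source yields the very weak bound $\|\tilde H\|_{L^2(\Omega)}\le C\|\tilde h\|_{H^{-1/2}(\partial\Omega)}\le C\|\tilde h\|_{L^2(\partial\Omega)}\le C\|h\|_{L^2(\partial\Omega)}$. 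Finally set $H=\tilde H+\alpha V_0$.

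By construction $H|_{\partial\Omega}=h$ and $\text{\rm div}(H)=\alpha\,\text{\rm div}(V_0)$, hence $\|\text{\rm div}(H)\|_{L^2(\Omega)}\le C|\alpha|\le C\|h\|_{L^2(\partial\Omega)}$. Combined with the two bounds on $\tilde H$ and the trivial estimate $|\alpha|\cdot\|V_0\|_{H^1(\Omega)}\le C\|h\|_{L^2(\partial\Omega)}$, the three target norms of $H$ satisfy $(\ast)$, and plugging into \eqref{3.1-0} gives \eqref{3.2-0}.

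The main obstacle is establishing the very weak $L^2$-estimate $\|\tilde H\|_{L^2(\Omega)}\le C\|\tilde h\|_{H^{-1/2}(\partial\Omega)}$ for the Stokes Dirichlet problem on the Lipschitz domain $\Omega$. This is classical when $\partial\Omega$ is $C^1$ (via $H^2$-regularity of the adjoint problem), and in the Lipschitz setting it follows from the nontangential-maximal-function bounds for Stokes solutions in \cite{FKV} already invoked in the proof of Lemma \ref{lemma-cell}. The entire construction is local near $\partial\Omega$ and independent of the subdomain partition $\{\Omega^\ell\}$, so the argument is essentially the same as in the case $L=1$ treated in \cite[Section 3]{Shen-Darcy-1}.
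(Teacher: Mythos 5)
Your construction is, up to a cosmetic relabeling, the same as the paper's: the paper takes $H$ to be the Stokes solution with $\text{\rm div}(H)=\gamma$ (a constant) and $H=h$ on $\partial\Omega$, then writes $H=H_1+\gamma(x-x_0)/d$ with $H_1$ a solenoidal Stokes solution and estimates $\|H_1\|_{L^2(\Omega)}\le C\|(H_1)^*\|_{L^2(\partial\Omega)}\le C\|H_1\|_{L^2(\partial\Omega)}\le C\|h\|_{L^2(\partial\Omega)}$ directly from the nontangential-maximal-function bound in \cite{FKV}; your $\tilde H+\alpha V_0$ plays exactly the role of $H_1+\gamma(x-x_0)/d$.

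The one place you should be careful is the detour through the ``very weak'' estimate $\|\tilde H\|_{L^2(\Omega)}\le C\|\tilde h\|_{H^{-1/2}(\partial\Omega)}$. That estimate is \emph{not} needed here, and it does \emph{not} follow from the nontangential-maximal-function theory in \cite{FKV}: what \cite{FKV} gives is well-posedness of the Dirichlet problem with $L^2(\partial\Omega)$ data, together with $\|(\tilde H)^*\|_{L^2(\partial\Omega)}\le C\|\tilde h\|_{L^2(\partial\Omega)}$, from which $\|\tilde H\|_{L^2(\Omega)}\le C\|(\tilde H)^*\|_{L^2(\partial\Omega)}$ is immediate (the nontangential cones over $\partial\Omega$ cover $\Omega$ with bounded overlap). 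Solvability with data merely in $H^{-1/2}(\partial\Omega)$ on a general Lipschitz domain is a delicate matter and should not be invoked; your chain of inequalities in fact only uses the $L^2(\partial\Omega)\to L^2(\Omega)$ bound, so simply state that and drop the $H^{-1/2}$ step.
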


\begin{proof}
For $h \in H^{1/2} (\partial \Omega; \R^d)$,
let $H$ be the weak solution in $H^1(\Omega; \R^d)$ of the Dirichlet problem, 
$$
\left\{
\aligned
-\Delta H +\nabla q & =0&\quad & \text{ in } \Omega,\\
\text{\rm div} (H) & =\gamma & \quad & \text{ in } \Omega,\\
u & =h & \quad & \text{ on } \partial \Omega,
\endaligned
\right.
$$
where the constant 
$$
\gamma =\frac{1}{|\Omega|} \int_{\partial\Omega} h \cdot n \, d\sigma 
$$
is chosen so that the compatibility condition \eqref{com} is satisfied.
Note that 
$$
\|\text{\rm div}(H)\|_{L^2(\Omega)}
=C |\gamma |\le C \| h\|_{L^2(\partial\Omega)},
$$
and by the standard energy estimates,
$
\|\nabla H \|_{L^2(\Omega)}  \le C \| h \|_{H^{1/2}(\partial\Omega)}.
$
In view of \eqref{3.1-0} we only need to show that 
\begin{equation}\label{3.2-0a}
 \| H \|_{L^2(\Omega)}
\le C 
\| h\|_{L^2(\partial\Omega)}.
\end{equation}

To this end, let
$$
H_1=H-\gamma (x-x_0)/d,
$$
where $x_0\in \Omega$.
Since $-\Delta H_1 +\nabla q=0$ and div$(H_1)=0$ in $\Omega$,
it follows from \cite{FKV} that
$$
\aligned
\| H_1\|_{L^2(\Omega)}
 &\le C \| (H_1)^* \|_{L^2(\partial\Omega)}\\
 &\le C \| H_1\|_{L^2(\partial \Omega)}
\le C \| h\|_{L^2(\partial\Omega)},
\endaligned
$$
where $(H_1)^*$ denotes the nontangential maximal function of $H_1$.
As a result, we obtain 
$$
\aligned
\| H\|_{L^2(\Omega)}
 & \le \| H_1 \|_{L^2(\Omega)} + C |\gamma| \\
 &\le C \| h\|_{L^2(\partial\Omega)},
\endaligned
$$
which completes the proof.
\end{proof}

\begin{cor}\label{cor-p}
Let $(u_\e, p_\e)$ be the same as in Theorem \ref{e-thm}.
Let $P_\e$ be the extension of $p_\e$, defined by \eqref{ext-1}.
Then
\begin{equation}\label{cor-p-0}
\|P_\e \|_{L^2(\Omega)}
\le C \left\{ 
\| f\|_{L^2(\Omega_\e)}
+ \| F \|_{L^2(\Omega_\e)}
+ \| g\|_{L^2(\Omega_\e)}
+  \| h \|_{L^2(\partial \Omega)}
+ \e \| h\|_{H^{1/2} (\partial \Omega)} \right\}.
\end{equation}
\end{cor}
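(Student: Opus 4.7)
The plan is to prove the corollary by relating $\|P_\e\|_{L^2(\Omega)}$ directly to $\|p_\e\|_{L^2(\Omega_\e)}$, and then invoking the energy estimate of Corollary \ref{cor-e}. The point is that the cellwise averaging in the definition \eqref{ext-1} of $P_\e$ cannot increase the $L^2$ norm by more than a fixed multiplicative constant depending only on the solid/fluid ratio of the unit-cell obstacles.

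Concretely, for each cube $\e(Y+z)\subset\Omega^\ell$ that contributes a solid inclusion, I would apply Jensen's inequality to the fluid-cell average:
\begin{equation*}
\Bigl|\fint_{\e(Y_f^\ell+z)} p_\e\Bigr|^2 \le \fint_{\e(Y_f^\ell+z)} |p_\e|^2,
\end{equation*}
and then multiply by the measure $|\e(Y_s^\ell+z)|$ to obtain
\begin{equation*}
\int_{\e(Y_s^\ell+z)} |P_\e|^2\,dx \le \frac{|Y_s^\ell|}{|Y_f^\ell|}\int_{\e(Y_f^\ell+z)} |p_\e|^2\,dx.
\end{equation*}
Summing over all admissible $z$ and $\ell$, and using that $P_\e=p_\e$ on $\Omega_\e$ (which covers both the fluid cells and the interface region $\Sigma$), the contributions of the solid cells are bounded by a multiple of $\|p_\e\|_{L^2(\Omega_\e)}^2$. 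Since $\max_\ell |Y_f^\ell|^{-1}$ is a constant depending only on the cell geometries, this yields
\begin{equation*}
\|P_\e\|_{L^2(\Omega)} \le C\,\|p_\e\|_{L^2(\Omega_\e)}.
\end{equation*}

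With this reduction in hand, the final step is immediate: apply Corollary \ref{cor-e} to bound $\|p_\e\|_{L^2(\Omega_\e)}$ by the right-hand side of \eqref{cor-p-0}. There is no real obstacle here; the only minor point to check is that the argument is uniform in $\ell$, which is automatic because there are only finitely many subdomains and each associated $Y_f^\ell$ has positive measure with $\text{dist}(\partial Y,\partial Y_s^\ell)>0$, so the constants $|Y_f^\ell|^{-1}$ are finite. Note that no duality argument via Lemma \ref{lemma-dual} is required for this corollary; that tool was only needed to construct $P_\e$ and identify its properties, not to bound it against $p_\e$.
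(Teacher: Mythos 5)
Your proof is correct and matches the paper's argument essentially step for step: both expand $\|P_\e\|_{L^2(\Omega)}^2$ using \eqref{ext-1}, apply Jensen's inequality on each fluid cell to control the solid-cell contribution by $\frac{1}{|Y_f^\ell|}\int_{\Omega_\e^\ell}|p_\e|^2$, and then conclude from the energy estimate \eqref{3.2-0} of Corollary \ref{cor-e}. Your closing remark that Lemma \ref{lemma-dual} plays no role here is also accurate.
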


\begin{proof}
By the definition of $P_\e$, we have
$$
\aligned
\int_\Omega |P_\e|^2\, dx
 & =\int_{\Omega_\e} |p_\e|^2\, dx
+ \sum_{\ell=1}^L
\sum_z |\e (Y_s^\ell +z)| \Big(\fint_{\e (Y^\ell_f +z)} p_\e \Big)^2\\
& \le \sum_{\ell=1}^L \frac{1}{|Y_f^\ell|} \int_{\Omega_\e^\ell} |p_\e|^2\, dx,
\endaligned
$$
which, together with \eqref{3.2-0}, gives \eqref{cor-p-0}.
\end{proof}

%%%%%%%%%%%%%%%%%%

\section{Homogenization and proof of Theorem \ref{main-thm-1}}\label{section-Q}

Let $f\in L^2(\Omega; \R^d)$ and $h\in H^{1/2}(\partial\Omega; \R^d)$ with $\int_{\partial\Omega} h\cdot n\, d\sigma =0$, where $n$ denotes the outward unit normal to 
$\partial\Omega$.
Consider the Dirichlet problem,
\begin{equation}\label{D-4}
\left\{
\aligned
-\e^2 \Delta u_\e +\nabla p_\e & =f & \quad & \text{ in } \Omega_\e,\\
\text{\rm div} (u_\e) & =0 & \quad & \text{ in } \Omega_\e,\\
u_\e & =0 & \quad & \text{ on } \Gamma_\e,\\
u_\e  & =h & \quad & \text{ on } \partial \Omega,
\endaligned
\right.
\end{equation}
where $\Omega_\e$ is  given by \eqref{domain-e} and $\partial \Omega_\e=\partial \Omega \cup \Gamma_\e$.
Throughout the section we assume that $\Omega$, $\Omega^\ell$ and $Y_s^\ell$ for $1\le \ell \le L$,
are domains with Lipschitz boundaries.
As before, we extend $u_\e$ to $\Omega$ by zero and still denote the extension by $u_\e$.
 We use  $P_\e$ to denote the extension of $p_\e$ to $\Omega$, given by \eqref{ext-1}.
The goal of this section is to prove the following theorem, which contains Theorem \ref{main-thm-1}
as a special case $h=0$.

\begin{thm}\label{H-thm}
Let $f\in L^2(\Omega; \R^d)$ and $h \in H^{1/2}(\partial\Omega; \R^d)$ with
 $\int_{\partial \Omega} h \cdot n \, d\sigma =0$.
Let $(u_\e, p_\e)\in H^1(\Omega_\e; \R^d) \times L_0^2(\Omega_\e)$ be the weak solution of \eqref{D-4}.
Let  $(u_\e, P_\e)$ be the extension of $(u_\e, p_\e)$.
Then $u_\e \to u_0$ weakly in $L^2(\Omega; \R^d)$ and
$P_\e -\fint_\Omega P_\e \to P_0$ strongly in $L^2(\Omega)$, as $\e\to 0$, where
$P_0  \in H^1(\Omega)$, $ \int_\Omega P_0\, dx =0$,
 $(u_0, P_0)$ 
is governed  by a Darcy law,
\begin{equation}\label{Darcy-4}
\left\{
\aligned
u_0 & = K (f-\nabla P_0) &\quad & \text{ in } \Omega,\\
\text{\rm div} (u_0) & =0& \quad & \text{ in } \Omega,\\
u_0\cdot n &  = h\cdot n & \quad & \text{ on } \partial \Omega,
\endaligned
\right.
\end{equation}
with the permeability  matrix $K$  given by \eqref{K-0}.
\end{thm}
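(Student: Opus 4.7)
The plan follows the Tartar homogenization paradigm, adapted to the multi-subdomain setting.

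From Corollaries \ref{cor-e} and \ref{cor-p}, the uniform bounds $\|u_\e\|_{L^2(\Omega)}+\e\|\nabla u_\e\|_{L^2(\Omega_\e)}+\|P_\e-\fint_\Omega P_\e\|_{L^2(\Omega)}\le C$ hold. Lemma \ref{lemma-dual} together with the Stokes equation gives $\|\nabla P_\e\|_{H^{-1}(\Omega)}\le C$, so Rellich compactness upgrades weak $L^2$ convergence to strong convergence along a subsequence, yielding $P_\e - \fint_\Omega P_\e \to P_0$ strongly in $L^2(\Omega)$ and $u_\e \rightharpoonup u_0$ weakly in $L^2(\Omega; \R^d)$. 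Because $\text{\rm div}(u_\e)=0$ in $\Omega_\e$ and $u_\e=0$ on $\Gamma_\e$, the zero extension of $u_\e$ is divergence-free in all of $\Omega$ with $u_0 \cdot n = h \cdot n$ on $\partial\Omega$.

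To identify the Darcy relation inside each subdomain, I fix $1 \le \ell \le L$, $1 \le j \le d$, and $\varphi \in C_c^\infty(\Omega^\ell)$, and test the weak form of \eqref{D-4} with $\varphi(x) W_j^\ell(x/\e) \in H_0^1(\Omega_\e; \R^d)$ (admissible since $W_j^\ell = 0$ on $\partial \omega^\ell$), while in parallel testing the rescaled cell equation $-\e^2\Delta[W_j^\ell(x/\e)]+\e\nabla[\pi_j^\ell(x/\e)]=e_j$ against $\varphi u_\e$. Upon subtracting, the symmetric term $\e^2 \int \varphi \nabla u_\e : \nabla W_j^\ell(x/\e)$ cancels. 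Passing to the limit using strong $L^2$ convergence of $P_\e$, weak $L^2$ convergence $W_j^\ell(x/\e) \rightharpoonup K^\ell e_j$, and the $\e$-suppression of $\pi_j^\ell(x/\e)$ yields
\[
\int_{\Omega^\ell} u_0 \cdot e_j\, \varphi\, dx = \int_{\Omega^\ell} K^\ell e_j \cdot (f \varphi - P_0 \nabla \varphi)\, dx,
\]
so $P_0 \in H^1(\Omega^\ell)$ and $u_0 = K^\ell(f - \nabla P_0)$ a.e.\ in $\Omega^\ell$.

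The main obstacle is promoting $P_0$ to $H^1(\Omega)$ by establishing continuity across the interface $\Sigma$. Fix a point $x_0$ in the relative interior of a Lipschitz component of $\partial\Omega^+ \cap \partial\Omega^-$ where subdomains $\Omega^\pm$ meet, a small neighborhood $U$ of $x_0$ intersecting only $\Omega^\pm$, and $\varphi \in C_c^\infty(U)$. I use the piecewise test function $\Phi_\e^j = \varphi\, W_j^+(x/\e)\, \chi_{\Omega_\e^+} + \varphi\, W_j^-(x/\e)\, \chi_{\Omega_\e^-}$, which lies in $H^1$ on each side but is discontinuous across $\Sigma$. Applying the Stokes equation in weak form on each $\Omega_\e^\pm$ separately and summing, the integration by parts produces extra boundary contributions along $\Sigma$ of the form
\[
\e^2 \int_\Sigma \partial_n u_\e \cdot \varphi[W_j^+ - W_j^-](x/\e)\, d\sigma - \int_\Sigma p_\e\, \varphi\, n \cdot [W_j^+ - W_j^-](x/\e)\, d\sigma.
\]
The crucial observation, mirroring \eqref{tan-1}, is that by Lemma \ref{skew-lemma} the normal component of the cell function admits the representation $n \cdot W_j^\pm(x/\e) = n \cdot K^\pm e_j + \e\, \nabla_{\tan} \cdot (\phi^\pm(x/\e))$ on the locally straightened $\Sigma$, with $\phi^\pm$ controlled in $L^2(\Sigma)$ by Lemma \ref{lemma-cell} and Lemma \ref{skew-lemma}. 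Tangential integration by parts on $\Sigma$ transfers the factor $\e$ onto $\varphi p_\e$, yielding an $o(1)$ remainder, while the viscous piece is $O(\e^{1/2})$ by Cauchy--Schwarz and Lemma \ref{lemma-cell}. The bulk limits, after using the Darcy relation from the previous step and integrating $K^\pm \nabla P_0 \cdot \nabla \varphi$ by parts on each side, reduce to surface contributions $\sum_{\pm} \int_\Sigma P_0^\pm\, \varphi\, n \cdot K^\pm e_j\, d\sigma$. Combining, one arrives at
\[
\int_\Sigma (P_0^+ - P_0^-)\, \varphi\, n \cdot (K^+ - K^-) e_j\, d\sigma = 0
\]
for every admissible $\varphi$ and every $j$. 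Since $K^\pm$ are symmetric positive-definite, the coefficient $n \cdot (K^+ - K^-) e_j$ is non-vanishing for some $j$ at almost every point of $\Sigma$ where $K^+ \neq K^-$ (and the remaining degenerate locus is handled by combining continuity of $u_0 \cdot n$ with the already-established Darcy identity on each side), so $P_0^+ = P_0^-$ a.e.\ on $\Sigma$ and hence $P_0 \in H^1(\Omega)$. With this, Lemma \ref{lemma-2.1} (adjusted for $u_0 \cdot n = h \cdot n$ on $\partial \Omega$) determines $P_0$ uniquely up to constants, which upgrades subsequential convergence to convergence of the entire family and completes the proof.
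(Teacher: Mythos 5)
Your outline captures the broad Tartar framework, but there are two significant gaps.

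\textbf{Strong convergence of $P_\e$.} You write that $\|P_\e\|_{L^2(\Omega)}+\|\nabla P_\e\|_{H^{-1}(\Omega)}\le C$ implies via ``Rellich compactness'' that $P_\e\to P_0$ strongly in $L^2$ along a subsequence. This is not so: a uniform $L^2$ bound on $P_\e$ automatically gives a uniform $H^{-1}$ bound on $\nabla P_\e$, and this pair of bounds does not compactly embed into $L^2$. The paper's Lemma \ref{lemma-4.1} proves strong $L^2$ convergence by a genuinely different argument: a contradiction scheme that uses the duality identity $\langle\nabla P_\e, u\rangle=\langle\nabla p_\e, R_\e(u)\rangle$ (Lemma \ref{lemma-dual}) together with the bound $\e\|\nabla R_\e(\varphi_k)\|_{L^2}+\|R_\e(\varphi_k)\|_{L^2}\le C(\e\|\nabla\varphi_k\|+\|\varphi_k\|)$ for a sequence $\varphi_k\rightharpoonup 0$ in $H_0^1(\Omega)$, so that the right-hand side tends to zero. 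Your shorthand does not supply this step.

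\textbf{Continuity of $P_0$ across $\Sigma$.} The interface identity you derive is both incorrectly computed and, even taken at face value, insufficient. With the piecewise test function $\varphi W_j^+\chi_{\Omega^+}+\varphi W_j^-\chi_{\Omega^-}$, the surface term from the two Stokes integrations by parts is $\int_\Sigma p_\e\,\varphi\, n\cdot(K_j^+-K_j^-)\,d\sigma + O(\e^{1/2})$ (after extracting the tangential-derivative part of $n\cdot(W_j^\pm-K_j^\pm)$), while the bulk terms collapse to $\int_\Sigma\big(P_0^+\,n\cdot K_j^+ - P_0^-\,n\cdot K_j^-\big)\varphi\,d\sigma$. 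Equating these leaves an unknown limit $\lim_\e\int_\Sigma p_\e\,\varphi\,n\cdot(K_j^+-K_j^-)\,d\sigma$ on the left, and the claimed conclusion $\int_\Sigma(P_0^+-P_0^-)\varphi\,n\cdot(K^+-K^-)e_j\,d\sigma=0$ does not follow from simple algebra on the right-hand side alone. Moreover, even if that identity did hold it degenerates whenever $K^+n=K^-n$ (the coefficient vanishes for every $j$), and your fallback --- continuity of $u_0\cdot n$ plus the Darcy relation --- gives only the flux matching $n\cdot K^+(f-\nabla P_0)^+=n\cdot K^-(f-\nabla P_0)^-$, not $P_0^+=P_0^-$. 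The paper avoids this entirely: Lemma \ref{lemma-4.5} works on each $\Omega^\ell$ \emph{separately}, deriving first $\lim_\e\int_{\partial\Omega^\ell}p_\e(n\cdot K_j^\ell)\varphi\,d\sigma=\int_{\partial\Omega^\ell}P^\ell(n\cdot K_j^\ell)\varphi\,d\sigma$, and only then invokes the invertibility of $K^\ell$ to upgrade this to $\lim_\e\int_{\partial\Omega^\ell}p_\e\,n_j\varphi\,d\sigma=\int_{\partial\Omega^\ell}P^\ell n_j\varphi\,d\sigma$ for each coordinate $j$; summing over $\ell$ and using that $p_\e$ is a single-valued function in $\Omega_\e$ (so the boundary integrals cancel in pairs) gives $\sum_\ell\int_{\partial\Omega^\ell}P^\ell n_j\varphi\,d\sigma=0$ and hence $P_0\in H^1(\Omega)$. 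The invertibility of $K^\ell$ must be exploited \emph{before} summing the two sides of $\Sigma$; your piecewise test function conflates them too early and leaves the degenerate case unresolved. A smaller point: the paper also proves the key lemma for $f\in C^\infty$ and recovers the general $f\in L^2$ by a final approximation argument, a step not present in your sketch.
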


We begin with the strong convergence of $P_\e$.

\begin{lemma}\label{lemma-4.1}
Let $(u_{\e_k}, p_{\e_k})$ be a weak solution of \eqref{D-4} with $\e=\e_k$.
Suppose that as $\e_k \to 0$,
$P_{\e_k} \to P$ weakly in $L^2(\Omega)$ for some $P\in L^2(\Omega)$.
Then $P_{\e_k}  \to P$ strongly in $L^2(\Omega)$.
\end{lemma}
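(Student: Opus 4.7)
The plan is to combine the Ne\v{c}as inequality
$$\|q-\fint_\Omega q\|_{L^2(\Omega)}\;\le\; C\,\|\nabla q\|_{H^{-1}(\Omega)},$$
valid on any bounded Lipschitz $\Omega$ for every $q\in L^2(\Omega)$, with a compactness argument for $\{\nabla P_{\e_k}\}$ in $H^{-1}(\Omega;\R^d)$. Once the precompactness is in hand, uniqueness of the weak $L^2$-limit $P$ forces the $H^{-1}$-limit to equal $\nabla P$, and the Ne\v{c}as bound promotes $P_{\e_k}-\fint_\Omega P_{\e_k}\to P-\fint_\Omega P$ to strong convergence in $L^2(\Omega)$; since the weak $L^2$-convergence already yields $\fint_\Omega P_{\e_k}\to \fint_\Omega P$, the strong convergence of $P_{\e_k}$ itself follows.

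To control $\nabla P_\e$ as a functional on $H^1_0(\Omega;\R^d)$, I will test the weak form of \eqref{D-4} against $R_\e(u)$---one must use $R_\e(u)$ rather than $u$ because $u$ does not vanish on $\Gamma_\e$---and apply Lemma~\ref{lemma-dual} to obtain
\begin{equation*}
\langle \nabla P_\e, u\rangle_{H^{-1}\times H^1_0}
\;=\; \int_{\Omega_\e} f\cdot R_\e(u)\,dx \;-\; \e^2\int_{\Omega_\e} \nabla u_\e\cdot \nabla R_\e(u)\,dx.
\end{equation*}
Lemma~\ref{lemma-2.3} controls $\|R_\e(u)\|_{L^2(\Omega_\e)}+\e\|\nabla R_\e(u)\|_{L^2(\Omega_\e)}$ by $C(\|u\|_{L^2(\Omega)}+\e\|\nabla u\|_{L^2(\Omega)})$, and Corollary~\ref{cor-e} furnishes the a priori bound $\e\|\nabla u_\e\|_{L^2(\Omega_\e)}\le C$; Cauchy-Schwarz then gives
\begin{equation*}
|\langle \nabla P_\e, u\rangle|\;\le\;C\bigl(\|u\|_{L^2(\Omega)}+\e\,\|\nabla u\|_{L^2(\Omega)}\bigr).
\end{equation*}

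A Hahn-Banach extension to $L^2(\Omega;\R^d)\times L^2(\Omega;\R^{d\times d})$, combined with Riesz representation on the range of $u\mapsto (u,\e\nabla u)$, converts the last inequality into a decomposition
$\nabla P_\e=\phi_\e-\e\,\text{\rm div}(\Phi_\e)$
with $\{\phi_\e\}$ bounded in $L^2(\Omega;\R^d)$ and $\{\Phi_\e\}$ bounded in $L^2(\Omega;\R^{d\times d})$. The first piece is precompact in $H^{-1}(\Omega;\R^d)$ because $L^2(\Omega)\hookrightarrow H^{-1}(\Omega)$ is compact (dual to the Rellich embedding), and the second satisfies $\|\e\,\text{\rm div}(\Phi_\e)\|_{H^{-1}}\le \e\|\Phi_\e\|_{L^2}\to 0$. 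Hence $\{\nabla P_{\e_k}\}$ is precompact in $H^{-1}$, and the proof concludes as outlined in the first paragraph. The main technical obstacle is arranging the viscous term to contribute only an $O(\e\|\nabla u\|_{L^2})$ error rather than an uncontrolled $O(\|\nabla u\|_{L^2})$ one: this depends on the precise matching between the $\e^2$-weight in front of the viscous term and the $\e$-weight inside the Lemma~\ref{lemma-2.3} bound, and it is precisely this matching that splits $\nabla P_\e$ cleanly into a compact part and a part vanishing in $H^{-1}$.
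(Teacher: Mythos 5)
Your proposal is correct and relies on exactly the same ingredients as the paper --- the identity from Lemma~\ref{lemma-dual}, the extension estimate of Lemma~\ref{lemma-2.3}, the a priori bound of Corollary~\ref{cor-e}, and the Ne\v{c}as inequality --- and both proofs reduce the lemma to showing $\nabla P_{\e_k}\to\nabla P$ strongly in $H^{-1}(\Omega;\R^d)$. The packaging of the compactness step differs. The paper argues by contradiction: it negates the conclusion, extracts unit-norm test functions $\psi_k$ witnessing failure of $H^{-1}$-convergence, replaces them by $\varphi_k=\psi_k-\psi_0\to 0$ weakly in $H_0^1$, and applies the bound $|\langle\nabla P_{\e_k},\varphi_k\rangle|\le C\bigl(\|\varphi_k\|_{L^2(\Omega)}+\e_k\|\nabla\varphi_k\|_{L^2(\Omega)}\bigr)$ together with Rellich to reach a contradiction. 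You instead make the compactness mechanism explicit and direct: Hahn--Banach plus Riesz turn the same bound into the decomposition $\nabla P_\e = \phi_\e - \e\,\text{\rm div}(\Phi_\e)$ with $\{\phi_\e\}$, $\{\Phi_\e\}$ uniformly bounded in $L^2$; the first piece is $H^{-1}$-precompact because $L^2(\Omega)\hookrightarrow H^{-1}(\Omega)$ is compact (dual to Rellich), and the second is $O(\e)$ in $H^{-1}$. Both arguments thus rest on the same compactness principle, but yours is direct rather than by contradiction and isolates the structural reason --- a genuinely $L^2$-bounded part plus an $o(1)$ part --- at the modest cost of invoking Hahn--Banach, which the paper avoids. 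One point worth spelling out when you write it up: to upgrade precompactness of $\{\nabla P_{\e_k}\}$ in $H^{-1}$ to convergence of the whole sequence, observe that any $H^{-1}$-limit point is also a distributional limit of $\nabla P_{\e_k}$ and hence equals $\nabla P$ by the assumed weak $L^2$-convergence of $P_{\e_k}$; you gesture at this with ``uniqueness of the weak $L^2$-limit,'' which is correct but deserves an explicit sentence.
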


\begin{proof}
The proof is similar to that for the classical case $L=1$ (see e.g. \cite{Allaire-1997}).
One argues by contradiction.
Suppose that $P_{\e_k}$ does not converge strongly to $P$ in $L^2(\Omega)$.
Since
$$
\|\nabla P_{\e_k} -\nabla P\|_{H^{-1}(\Omega)}
\sim \| P_{\e_k} -P-\fint_\Omega (P_{\e_k} - P) \|_{L^2(\Omega)}
$$
and $\int_\Omega P_{\e_k} \, dx \to \int_\Omega P\, dx$,
it follows that $\nabla P_{\e_k}$ does not converge to $\nabla P$ strongly in $H^{-1}(\Omega; \R^d)$.
By passing to a subsequence, this implies that there exists a sequence $\{\psi_k \} \subset H_0^1(\Omega; \R^d)$ such that
$\| \psi_k \| _{H^1_0(\Omega)} =1$ and
$$
|\langle \nabla P_{\e_k} -\nabla P, \psi_k \rangle_{H^{-1}(\Omega) \times H^1_0(\Omega)}|
\ge c_0>0.
$$
By passing to another  subsequence, we may assume that $\psi_k \to \psi_0$ weakly in $H_0^1(\Omega; \R^d)$.
Let $\varphi_k =\psi_k -\psi_0$.
Using  $P_{\e_k} \to P$ weakly in $L^2(\Omega)$, we obtain 
\begin{equation}\label{4.1-1}
|\langle \nabla P_{\e_k} -\nabla P, \varphi_k \rangle_{H^{-1}(\Omega) \times H^1_0(\Omega)}|
\ge c_0/2,
\end{equation}
if $k$ is sufficiently large. 
Since $\varphi_k \to 0$ weakly in $H^1_0(\Omega; \R^d)$, we may conclude further  that
\begin{equation}\label{4.1-2}
|\langle \nabla P_{\e_k} , \varphi_k \rangle_{H^{-1}(\Omega) \times H^1_0(\Omega)}|
\ge c_0/4,
\end{equation}
if $k$ is sufficiently large.
On the other hand, by \eqref{lemma-dual}, we have
\begin{equation}\label{4.1-3}
\aligned
& |\langle \nabla P_{\e_k}, \varphi_k \rangle_{H^{-1} (\Omega) \times H_0^1(\Omega)} |
 =| \langle \nabla p_{\e_k}, R_{\e_k} (\varphi_k) \rangle_{H^{-1}(\Omega_{\e_k}) \times H_0^1(\Omega_{\e_k})} |\\
& =| \langle \e_k^2 \Delta u_{\e_k} + f , R_{\e_k} (\varphi_k) \rangle_{H^{-1}(\Omega_{\e_k} ) \times H_0^1(\Omega_{\e_k} )} |\\
&\le \e_k^2 \|\nabla u_{\e_k} \|_{L^2(\Omega_{\e_k})}
\|\nabla R_{\e_k} (\varphi_k)\|_{L^2(\Omega_{\e_k})}
+ \| f\|_{L^2(\Omega)} \| R_{\e_k} (\varphi_k)\|_{L^2(\Omega_{\e_k})}\\
&\le C \left( \| f\|_{L^2(\Omega)}
+ \| h \|_{H^{1/2}(\partial\Omega)} \right)
\left( \e_k \|\nabla R_{\e_k} (\varphi_k)\|_{L^2(\Omega_{\e_k})}
+ \| R_{\e_k} (\varphi_k)\|_{L^2(\Omega_{\e_k})} \right)\\
&\le 
C \left( \| f\|_{L^2(\Omega)}
+ \| h \|_{H^{1/2}(\partial\Omega)} \right)
\left( \e_k \|\nabla \varphi_k\|_{L^2(\Omega)}
+ \| \varphi_k\|_{L^2(\Omega)} \right),
\endaligned
\end{equation}
where we have used the estimate \eqref{3.2-0} for the second inequality and \eqref{R-3} for the last.
This contradicts with \eqref{4.1-2} as the right-hand side of \eqref{4.1-3} goes to zero.
\end{proof}

By Corollaries  \ref{cor-e} and \ref{cor-p}, the sets $\{ u_\e: 0< \e< 1 \}$ and $\{ P_\e: 0< \e< 1 \}$ are bounded 
in $L^2(\Omega; \R^d)$ and $L^2 (\Omega)$, respectively.
It follows that for any sequence $\e_k \to 0 $, there exists a subsequence, still denoted by $\{\e_k\}$,
 such that $u_{\e_k} \to u$ and $P_{\e_k} \to P$
weakly in $L^2(\Omega; \R^d)$ and $L^2(\Omega)$, respectively.
By Lemma \ref{lemma-4.1},  $P_{\e_k} \to P$ strongly in $L^2(\Omega)$. Thus,
as in the classical case $L=1$,
to prove Theorem \ref{H-thm}, it suffices to show that if 
$\e_k \to 0$, $u_{\e_k} \to u$ weakly in $L^2(\Omega; \R^d)$, and $P_{\e_k} \to P$ strongly  in
$L^2(\Omega)$, then  $P\in H^1(\Omega)$ and $(u, P)$ is a weak solution of 
\eqref{Darcy-4}.
Since the solution of \eqref{Darcy-4} is unique under the conditions that $P_0\in H^1(\Omega)$ and $ \int_\Omega P_0\, dx =0$, one concludes that
as $\e \to 0$, $u_\e\to u_0$ weakly in $L^2(\Omega; \R^d)$ and
$P_\e -\fint_\Omega P_\e \to P_0$ strongly in $L^2(\Omega)$, where $(u_0, P_0)$ is the 
unique solution of \eqref{Darcy-4} with the property $P_0\in H^1(\Omega)$ and $\int_\Omega P_0\, dx =0$.

\begin{lemma}\label{lemma-4.2}
Let $\{ \e_k \}$ be a sequence such that $\e_k \to 0$.
Suppose that $u_{\e_k} \to u$ weakly in $L^2(\Omega; \R^d)$ and
$P_{\e_k} \to P$ strongly in $L^2(\Omega)$.
Then $P\in H^1(\Omega^\ell)$ for $1\le \ell \le L$ and  $(u, P)$ is a solution of $\eqref{Darcy-4}$.
\end{lemma}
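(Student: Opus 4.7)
The proof splits into three logical pieces. First I would establish the local Darcy law $u=K^\ell(f-\nabla P)$ in each subdomain by the classical Tartar argument; second, the global divergence-free condition and the boundary trace on $\partial\Omega$ are essentially routine; third, the real work is to show that $P$ is continuous across the interface $\Sigma$ so that $P\in H^1(\Omega)$.

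For Part 1, fix $\ell$ and a scalar $\eta\in C_0^\infty(\Omega^\ell)$, and form $\phi_\e(x)=\eta(x)W_k^\ell(x/\e)$ for each $k$. This lies in $H_0^1(\Omega_\e;\R^d)$ by the cell-problem boundary condition in \eqref{cor-1} and the support of $\eta$, and $\text{\rm div}(\phi_\e)=W_{ik}^\ell(x/\e)\partial_i\eta$ since $\text{\rm div}_y W_k^\ell=0$. Inserting $\phi_\e$ into the weak form of \eqref{D-4}, manipulating the momentum equation with the cell equation for $(W^\ell,\pi^\ell)$, and taking $\e_k\to 0$ using strong $L^2$ convergence of $P_{\e_k}$, weak convergence $W^\ell(\cdot/\e_k)\rightharpoonup K^\ell$, and the energy bound $\e\|\nabla u_\e\|_{L^2}\le C$ of Corollary \ref{cor-e}, one recovers $u = K^\ell(f-\nabla P)$ in $\Omega^\ell$; positive-definiteness of $K^\ell$ then gives $\nabla P\in L^2(\Omega^\ell)$, i.e.\ $P\in H^1(\Omega^\ell)$. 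For Part 2, for any $\psi\in C^1(\overline\Omega)$, integration by parts in $\Omega_{\e_k}$ together with $\text{\rm div}(u_{\e_k})=0$, $u_{\e_k}|_{\Gamma_{\e_k}}=0$, $u_{\e_k}|_{\partial\Omega}=h$ gives $\int_\Omega u_{\e_k}\cdot\nabla\psi\,dx=\int_{\partial\Omega}(h\cdot n)\psi\,d\sigma$; weak passage in $u_{\e_k}$ preserves the identity and distributionally encodes $\text{\rm div}(u)=0$ in $\Omega$ and $u\cdot n=h\cdot n$ on $\partial\Omega$.

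Part 3 is the hard step. Parts 1 and 2 already imply normal-flux continuity $n\cdot K^+(f-\nabla P^+)=n\cdot K^-(f-\nabla P^-)$ on $\Sigma$, but this leaves $P$ determined only up to additive constants in each subdomain; the remaining task is to show $P^+=P^-$ on $\Sigma$. Following the paper's hint I would apply Tartar's method with a test field that crosses $\Sigma$: for $\xi\in C_0^\infty(\Omega;\R^d)$ supported away from $\partial\Omega$ and meeting $\Sigma$, take $\phi_\e(x)=W^\ell(x/\e)\xi(x)$ in $\Omega^\ell$. This field is not in $H_0^1(\Omega_\e;\R^d)$ because of its jump $(W^+-W^-)(x/\e)\xi$ across $\Sigma$, so I would test \eqref{D-4} piecewise in each $\Omega_\e^\ell$ and sum, picking up a surface integral
\[
\int_\Sigma\bigl\{p_\e n - \e^2\partial_n u_\e\bigr\}\cdot(W^+-W^-)(x/\e)\xi\,d\sigma.
\]
Decomposing $W^\pm(x/\e)=K^\pm + (W^\pm(x/\e)-K^\pm)$ and representing the oscillatory remainder via the skew potentials of Lemma \ref{skew-lemma}, $W_{ij}^\ell-K_{ij}^\ell=\partial_{y_k}\phi_{kij}^\ell$, a tangential integration by parts on $\Sigma$ combined with the surface bounds \eqref{skew-2} and \eqref{cell-2} shows the oscillatory piece is $O(\sqrt\e)$ and drops out. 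The residual $\int_\Sigma p_\e\,n\cdot(K^+-K^-)\xi\,d\sigma$ converges via strong $L^2$ convergence of $P_\e$ and a trace argument on each side of $\Sigma$; matching the surface limit against the bulk identity from Part 1 yields $\int_\Sigma(P^+-P^-)(g\cdot n)\,d\sigma=0$ for a sufficiently rich family of $\xi$'s, forcing $P^+=P^-$ on $\Sigma$. Thus $P\in H^1(\Omega)$ and $(u,P)$ solves \eqref{Darcy-4}.

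The main obstacle will be Part 3: cleanly extracting the $O(\sqrt\e)$ smallness of the oscillatory surface term via the skew potentials and Lemma \ref{lemma-cell}, passing to the limit in the traces of $p_\e$ from each side of $\Sigma$ (which needs more than just $L^2$ convergence of $P_\e$), and choosing the test family $\xi$ flexibly enough to deduce pointwise continuity $P^+=P^-$ rather than only a weighted or averaged statement.
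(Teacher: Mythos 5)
Your Parts 1 and 2 reproduce the paper's proof of Lemma \ref{lemma-4.2} and they alone suffice for it. The statement claims only the piecewise regularity $P\in H^1(\Omega^\ell)$ for each $\ell$, not global $H^1(\Omega)$ regularity; with $\nabla P$ understood as the piecewise gradient (which is in $L^2(\Omega)$ since the interfaces have measure zero), the three relations in \eqref{Darcy-4} are all meaningful, and they follow from your Part 1 (Tartar's cell-function test in each $\Omega^\ell$ yields $u=K^\ell(f-\nabla P)$ and $P\in H^1(\Omega^\ell)$) together with your Part 2 ($\text{\rm div}(u)=0$ in $\Omega$ and $u\cdot n=h\cdot n$ on $\partial\Omega$).

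Your Part 3 misreads the goal. The continuity of $P$ across $\Sigma$, and hence $P\in H^1(\Omega)$, is \emph{not} part of Lemma \ref{lemma-4.2}; it is established separately in Lemma \ref{lemma-4.5} and the proof of Theorem \ref{H-thm}. If you did want to prove that stronger statement, your sketch has the gap you yourself flagged: strong $L^2(\Omega)$ convergence of $P_{\e_k}$ gives no control on the boundary traces of $p_{\e_k}$ along $\partial\Omega^\ell$, so you cannot pass to the limit in surface integrals such as $\int_\Sigma p_\e\, n\cdot(K^+-K^-)\xi\, d\sigma$ directly. The paper circumvents this by first taking $f\in C^\infty(\R^d;\R^d)$, using interior Stokes estimates in an $\e$-collar of $\Sigma$ (Lemma \ref{lemma-4.3}) to obtain $O(\e^{-1/2})$ bounds for $\|p_\e\|_{L^2(\gamma_\e)}$ and $\|\nabla p_\e\|_{L^2(\gamma_\e)}$ (Lemma \ref{Lemma-4.4}), and pairing these with the $O(\e)$ gain from the skew-potential identity \eqref{4.5-11} to obtain the trace limit in Lemma \ref{lemma-4.5}; the general $f\in L^2$ case is then recovered by an approximation argument in the proof of Theorem \ref{H-thm}. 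Your qualitative idea (split $W^\pm$ into $K^\pm$ plus an oscillatory part controlled via $\phi^\ell_{kij}$) is the right one, but neither the smoothness assumption on $f$ nor the quantitative near-interface estimates are dispensable --- they are exactly what makes the surface limit pass.
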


\begin{proof}

Since
$$
\int_{\Omega} u_{\e_k} \cdot \nabla \varphi\, dx
=\int_{\partial \Omega} (h\cdot n) \varphi\, d\sigma
$$
for any $\varphi \in C^\infty(\R^d)$, by letting $k \to \infty$, we see that 
$$
\int_{\Omega} u \cdot \nabla \varphi\, dx
=\int_{\partial \Omega} (h\cdot n) \varphi\, d\sigma
$$
for any $\varphi \in C^\infty(\R^d)$.
It follows that div$(u)=0$ in $\Omega$ and $u\cdot n = h\cdot n$ on $\partial \Omega$.

Next, we show that $P\in H^1(\Omega^\ell)$ for each subdomain $\Omega^\ell$ and that
\begin{equation}\label{4.2-0}
u=K^\ell (f-\nabla P) \quad \text{ in } \Omega^\ell,
\end{equation}
 where $K^\ell =(K^\ell_{ij})$ is defined by \eqref{K-1}.
The argument is the same as that of Tartar  for the case $L=1$ (see \cite{Sanchez-1980}).
Fix $1\le \ell \le L$, $1\le j\le d$,  and $\varphi \in C_0^\infty(\Omega^\ell)$.
We assume $k>1$ is sufficiently large that 
supp$(\varphi) \subset \{ x\in \Omega^\ell: \text{dist}(x, \partial\Omega^\ell) \ge C_d \e_k \}$.
Let $( W_j^\ell (y), \pi_j^\ell (y)) $ be the 1-periodic functions given by \eqref{cor-1}.
By using $W_j^\ell (x/\e_k) \varphi$ as a test function, we obtain 
\begin{equation}\label{4.2-1}
\aligned
& \e_k\int_{\Omega^\ell}
\nabla u_{\e_k}  \cdot \nabla W_j^\ell (x/\e_k ) \varphi\, dx
+ \e^2_k \int_{\Omega^\ell}
\nabla u_{\e_k} \cdot W_j^\ell (x/\e_k) \nabla \varphi \, dx
-\int_{\Omega^\ell} P_{\e_k} W_j^\ell (x/\e_k) \cdot \nabla \varphi\, dx\\
&=\int_{\Omega^\ell}
f\cdot W_j^\ell (x/\e_k) \varphi\, dx,
\endaligned
\end{equation}
where we have used the facts that $\text{\rm div} (W^\ell_j (x/\e))=0$ in $\R^d$ and
$W_j^\ell (x/\e)=0$ on $\Gamma_{\e} $.
Since $W_{ij}^\ell (x/\e_k) \to K^\ell_{ij}$ weakly in $L^2(\Omega^\ell)$ and
$P_{\e_k} \to P$ strongly in $L^2(\Omega^\ell)$,
we deduce from \eqref{4.2-1}  that
\begin{equation}\label{4.2-1-1}
\lim_{k \to \infty}
\e_k\int_{\Omega^\ell}
\nabla u_{\e_k}  \cdot \nabla W_j^\ell (x/\e_k ) \varphi\, dx
=\int_{\Omega^\ell} P K^\ell_{ij} \frac{\partial \varphi}{\partial x_i}\, dx
+\int_{\Omega^\ell} f_iK_{ij}^\ell \varphi\, dx,
\end{equation}
where the repeated  index $i$ is summed from $1$ to $d$.

Note that
$$
\aligned
-\e^2 \Delta \left( W_j^\ell (x/\e) \right)
+\nabla \left( \e \pi_j^\ell (x/\e) \right)  & =e_j  \\
\endaligned
$$
in the set $\{ x\in \Omega^\ell_\e: \text{\rm dist}(x, \partial \Omega^\ell) \ge c_d \e \}$.
By using $u_{\e_k} \varphi$ as a test function, we see that
\begin{equation}\label{4.2-2}
\aligned
& \e_k \int_{\Omega^\ell} \nabla  W_j^\ell (x/\e_k)  \cdot(  \nabla u_{\e_k} ) \varphi\, dx
+\e_k \int_{\Omega^\ell} \nabla  W_j^\ell (x/\e_k) \cdot u_{\e_k} (\nabla \varphi)\, dx\\
& \qquad\qquad-\e_k \int_{\Omega^\ell} \pi_j^\ell (x/\e_k) u_{\e_k} (\nabla \varphi)\, dx
=\int_{\Omega^\ell} e_j \cdot u_{\e_k} \varphi\, dx,
\endaligned
\end{equation}
which  leads to
\begin{equation}\label{4.2-3}
\lim_{k \to \infty}
\e_k \int_{\Omega^\ell} \nabla  W_j^\ell (x/\e_k)  \cdot(  \nabla u_{\e_k} ) \varphi\, dx.
=\int_{\Omega^\ell} e_j \cdot u \varphi\, dx.
\end{equation}
In view of \eqref{4.2-1-1} and \eqref{4.2-3} we obtain
$$
\int_{\Omega^\ell} e_j \cdot u \varphi\, dx
=\int_{\Omega^\ell} P K^\ell_{ij} \frac{\partial \varphi}{\partial x_i}\, dx
+\int_{\Omega^\ell} f_iK_{ij}^\ell \varphi\, dx.
$$
Since $\varphi\in C_0^\infty(\Omega^\ell)$ is arbitrary and the constant matrix  $K^\ell=(K_{ij}^\ell)$ is
invertible, we conclude that $P\in H^1(\Omega^\ell)$
and
$$
u_j = K_{ij}^\ell \Big(f_i -\frac{\partial P}{\partial x_i}\Big)
$$
in $\Omega^\ell$.
Since $K^\ell $ is also symmetric, this gives \eqref{4.2-0}.
\end{proof}

To prove the effective pressure  in Lemma \ref{lemma-4.2}  $P\in H^1(\Omega)$,
it remains  to show that $P$ is  continuous across the interface $\Sigma =\Omega\setminus \cup_{\ell=1}^L \Omega^\ell$
between subdomains.

\begin{lemma}\label{lemma-4.3}
Let $f\in C^m (B(x_0, 2c\e); \R^d)$ for some $x_0\in \R^d$, $m\ge 0$ and $c>0$.
Suppose that 
\begin{equation}\label{4.3-0}
\left\{
\aligned
-\e^{2}  \Delta u_\e +\nabla p_\e & = f & \quad & \text{ in } B(x_0, 2c\e),\\
\text{\rm div} (u_\e) & =0 & \quad & \text{ in } B(x_0, 2c\e).
\endaligned
\right.
\end{equation}
Then
\begin{equation}\label{4.3-1}
\aligned
 & \e^{m+2}  \left(\fint_{B(x_0, c\e)} |\nabla^{m+2} u_\e|^2\right)^{1/2}
 \le C \left(\fint_{B(x_0, 2c\e)} |u_\e|^2 \right)^{1/2}
+ C \sum_{k=0}^m \e^k\| \nabla^k f \|_\infty,
\endaligned
\end{equation}
where $C$ depends only on $d$, $m$ and $c$.
\end{lemma}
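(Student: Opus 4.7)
The plan is to reduce the estimate to the standard interior regularity for the Stokes system on a fixed ball by a rescaling argument.
Define
\[
v(y) = u_\e(x_0 + \e y), \qquad q(y) = \e\, p_\e(x_0 + \e y), \qquad \tilde{f}(y) = \e^2 f(x_0 + \e y),
\]
for $y \in B(0, 2c)$. A direct computation using $\nabla_y = \e(\nabla_x)$ shows that the factors of $\e$ cancel and $(v, q)$ satisfies the unit-scale Stokes system
\[
-\Delta v + \nabla q = \tilde{f}, \qquad \text{\rm div}(v) = 0 \qquad \text{ in } B(0, 2c).
\]

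Next I would invoke the classical interior regularity estimate for the Stokes system: for any $m \ge 0$,
\[
\|\nabla^{m+2} v\|_{L^2(B(0, c))}
\le C \|v\|_{L^2(B(0, 2c))} + C \sum_{k=0}^m \|\nabla^k \tilde{f}\|_{L^\infty(B(0, 2c))}.
\]
This follows by a standard bootstrap: taking divergence of the momentum equation gives $\Delta q = \text{\rm div}(\tilde{f})$, so elliptic $L^2$-regularity yields interior bounds on $\nabla^{m+1} q$ in terms of $\nabla^k \tilde{f}$ and $\|q\|_{L^2}$; then $-\Delta v = \tilde{f} - \nabla q$ gives bounds on $\nabla^{m+2} v$ via elliptic regularity for the Laplacian; the pressure norm $\|q\|_{L^2}$ on a slightly smaller ball is controlled by $\|v\|_{L^2(B(0,2c))}$ and $\|\tilde{f}\|_{L^2}$ through the usual Neças/Bogovskii duality, using a chain of nested balls to absorb the pressure constant. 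This is well-known and can be cited from any standard reference on Stokes.

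Finally I would scale back. Using $\nabla_y^{m+2} v(y) = \e^{m+2}(\nabla^{m+2} u_\e)(x_0 + \e y)$ and the change of variables $x = x_0 + \e y$, one gets
\[
\e^{m+2}\bigl(\textstyle\fint_{B(x_0, c\e)} |\nabla^{m+2} u_\e|^2\bigr)^{1/2}
\sim \bigl(\textstyle\fint_{B(0, c)} |\nabla^{m+2}_y v|^2\bigr)^{1/2},
\quad
\bigl(\textstyle\fint_{B(x_0, 2c\e)} |u_\e|^2\bigr)^{1/2}
\sim \bigl(\textstyle\fint_{B(0, 2c)} |v|^2\bigr)^{1/2},
\]
while $\|\nabla^k_y \tilde{f}\|_{L^\infty(B(0, 2c))} = \e^{k+2} \|\nabla^k f\|_{L^\infty(B(x_0, 2c\e))}$. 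Plugging these into the rescaled estimate and using $\e^{k+2} \le \e^k$ (since $0 < \e < 1$) gives the claim.

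There is no real obstacle here; the only delicate point is the interior Stokes estimate with $L^\infty$ data on $\tilde{f}$, which is standard but must be stated in the right form (measuring $v$ in $L^2$ and $f$ derivatives in $L^\infty$) so that the scaling picks up exactly the factors of $\e$ appearing in the target inequality.
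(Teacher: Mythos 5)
Your overall strategy is exactly the paper's: rescale to unit scale, apply classical interior Stokes regularity, and scale back. However, the rescaling as written is off by a factor of $\e^2$. With $v(y) = u_\e(x_0 + \e y)$, the chain rule gives $(\Delta_y v)(y) = \e^2 (\Delta_x u_\e)(x_0 + \e y)$, so the $\e^2$ in front of $\Delta u_\e$ in \eqref{4.3-0} is \emph{exactly} absorbed and the momentum equation becomes
\[
-(\Delta_y v)(y) + (\nabla_x p_\e)(x_0 + \e y) = f(x_0 + \e y) \qquad \text{in } B(0, 2c).
\]
To convert $(\nabla_x p_\e)(x_0 + \e y)$ into $\nabla_y q(y)$ you must take $q(y) = \e^{-1} p_\e(x_0 + \e y)$, not $\e\, p_\e(x_0 + \e y)$, and then the forcing is $\tilde f(y) = f(x_0 + \e y)$, not $\e^2 f(x_0 + \e y)$. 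With your choices the pair $(v, q)$ actually satisfies $-(1-\e^2)\Delta_y v + \nabla_y q = \e^2 f(x_0 + \e y)$, which is not the unit-scale Stokes system you invoke; it looks as though you rescaled the ordinary Stokes equation $-\Delta u_\e + \nabla p_\e = f$ rather than the $\e$-weighted one in the statement. Once the rescaling is corrected, $\|\nabla^k_y \tilde f\|_{L^\infty(B(0,2c))} = \e^k \|\nabla^k f\|_{L^\infty(B(x_0,2c\e))}$, which plugs into the interior estimate to give precisely the $\sum_{k=0}^m \e^k \|\nabla^k f\|_\infty$ on the right-hand side of \eqref{4.3-1} --- there is then no need for the concluding ``$\e^{k+2} \le \e^k$'' step, which in your write-up was silently compensating for the misplaced power of $\e$. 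The scaling-back computation for $\nabla^{m+2} v$ and $v$, and the appeal to standard interior Stokes regularity with the pressure absorbed via a nested-ball argument, are fine.
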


\begin{proof}
The case $\e=1$ is given by  interior estimates for the Stokes equations.
The general case follows by a simple rescaling argument.
\end{proof}

Define
\begin{equation}\label{gamma-1}
\gamma_\e = \big\{ x\in \Sigma: \ \text{\rm dist} (x, \partial \Omega)\ge  \e \big\},
\end{equation}
where $\Sigma $ is the interface given by \eqref{interface}

\begin{lemma}\label{Lemma-4.4}
Let $(u_\e, p_\e)$ be a solution of \eqref{D-4} with $f\in C^\infty (\R^d; \R^d)$ and $h \in H^{1/2}(\partial\Omega; \R^d)$.
Then, for $m\ge 0$, 
\begin{equation}\label{4.4-0}
\aligned
\| \nabla^m u_\e \|_{L^2 (\gamma_\e)}   & \le C (f, h)  \e^{-m-\frac12}, \\
\| p_\e\|_{L^2(\gamma_\e)} &  \le C(f, h)  \e^{-\frac12}, \\
 \|\nabla p_\e \|_{L^2(\gamma_\e)}  & \le C(f, h)  \e^{-\frac12},
\endaligned
\end{equation}
where $C(f, h) $ depends on $m$, $f$ and $h$,  but not on $\e$.
\end{lemma}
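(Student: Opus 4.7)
The plan is to reduce the surface estimates on $\gamma_\e$ to interior Stokes regularity via a covering at scale $\e$ combined with a scaled trace inequality. The key geometric observation is that $\gamma_\e$ has uniform $O(\e)$ clearance from both $\partial\Omega$ (by definition of $\gamma_\e$) and from every obstacle: since $Y_s^\ell$ is compactly contained in $Y$ with $\text{\rm dist}(\partial Y, \partial Y_s^\ell) > 0$, every obstacle $\e(\overline{Y_s^\ell}+z)$ sits at distance $\ge c_0\e$ from its cell boundary $\partial(\e(Y+z))$, hence from the interface $\Sigma \supset \gamma_\e$. Consequently there exists $c>0$ (depending only on the $Y_s^\ell$) such that for every $x_0 \in \gamma_\e$ one has $B(x_0, 2c\e) \subset \Omega_\e$, and the Stokes system holds throughout this ball so that Lemma \ref{lemma-4.3} applies at $x_0$.

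Next I would cover $\gamma_\e$ by balls $\{B(x_i, c\e): x_i \in \gamma_\e\}$ with uniformly bounded overlap; since $\gamma_\e$ is a bounded $(d-1)$-dimensional Lipschitz set, a Vitali-type argument gives $N \sim \e^{-(d-1)}$ such balls, and the tubular neighborhood $\Theta_\e := \bigcup_i B(x_i, c\e)$ satisfies $|\Theta_\e| \lesssim \e$ and $\Theta_\e \subset \Omega_\e$. Combining Lemma \ref{lemma-4.3} on each ball with the global bound $\|u_\e\|_{L^2(\Omega)} + \e\|\nabla u_\e\|_{L^2(\Omega_\e)} \le C(f, h)$ from Corollary \ref{cor-e}, and absorbing the $\|\nabla^k f\|_\infty$ terms into the constant since $f\in C^\infty$, summation with bounded overlap yields, for every integer $M \ge 0$,
\[
\|\nabla^M u_\e\|_{L^2(\Theta_\e)} \le C(f, h, M)\,\e^{-M},
\]
where $M=0,1$ come directly from the energy estimate and $M\ge 2$ from Lemma \ref{lemma-4.3}. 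Using the pointwise identity $\nabla p_\e = f + \e^2\Delta u_\e$ in $\Omega_\e$, this gives $\|\nabla^M p_\e\|_{L^2(\Theta_\e)} \le C\e^{1-M}$ for $M \ge 1$, while $\|p_\e\|_{L^2(\Theta_\e)} \le \|p_\e\|_{L^2(\Omega_\e)} \le C$ again by Corollary \ref{cor-e}.

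To finish, I invoke the scaled trace inequality: since $\gamma_\e$ is a Lipschitz $(d-1)$-surface lying inside the tubular neighborhood $\Theta_\e$ of width $\sim \e$, for every $v \in H^1(\Theta_\e)$,
\[
\|v\|_{L^2(\gamma_\e)}^2 \le C\,\e^{-1}\|v\|_{L^2(\Theta_\e)}^2 + C\,\e\,\|\nabla v\|_{L^2(\Theta_\e)}^2,
\]
where $C$ depends on the Lipschitz constant of $\Sigma$. Taking $v = \nabla^m u_\e$ gives $\|\nabla^m u_\e\|_{L^2(\gamma_\e)}^2 \le C\e^{-1-2m} + C\e\cdot\e^{-2(m+1)} = C\e^{-1-2m}$, i.e.\ $\|\nabla^m u_\e\|_{L^2(\gamma_\e)} \le C\e^{-m-1/2}$; applying the same inequality to $v = p_\e$ and $v = \nabla p_\e$ gives $\|p_\e\|_{L^2(\gamma_\e)} + \|\nabla p_\e\|_{L^2(\gamma_\e)} \le C\e^{-1/2}$.

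The only substantive step is the geometric clearance between $\gamma_\e$ and the obstacles: once this is in place, the result is a routine application of interior Stokes regularity combined with an $\e$-scale covering and trace inequality. The final exponent $-m-\tfrac12$ is precisely the natural balance between the volume $|\Theta_\e|\sim\e$ and the surface-to-bulk gap factor $\e^{-1/2}$ produced by the scaled trace inequality.
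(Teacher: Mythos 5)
Your argument is correct and is essentially the paper's: both rely on (i) observing that every $x_0\in\gamma_\e$ has an $\e$-ball $B(x_0,2c\e)\subset\Omega_\e$ thanks to the clearance of $\gamma_\e$ from both $\partial\Omega$ and the obstacles (the cells $\e(Y+z)\subset\Omega^\ell$ that carry obstacles are at positive $\e$-proportional distance from $\Sigma$, and $\gamma_\e$ is $\ge\e$ from $\partial\Omega$); (ii) covering a tubular neighbourhood of $\gamma_\e$ by such balls and applying the interior Stokes estimate of Lemma~\ref{lemma-4.3} together with the global energy bound of Corollary~\ref{cor-e}; (iii) using the scaled trace inequality $\|v\|_{L^2(\gamma_\e)}^2\le C\e^{-1}\|v\|_{L^2(\Theta_\e)}^2+C\e\|\nabla v\|_{L^2(\Theta_\e)}^2$; and (iv) recovering $p_\e$ and $\nabla p_\e$ from the equation $\nabla p_\e=f+\e^2\Delta u_\e$. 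The only cosmetic difference is that the paper works with the one-sided strip $D_\e^\ell\subset\Omega^\ell$ and applies the trace inequality first, while you work with a two-sided tubular neighbourhood and apply the interior estimates first, but the computations and exponents are identical.
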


\begin{proof}
Recall that  
$$
\Sigma = \cup_{\ell=1}^L \partial\Omega^\ell \setminus \partial\Omega.
$$
It follows that  $\gamma_\e =\cup_{\ell=1}^L \gamma_\e^\ell$, where
$$
\gamma_\e^\ell = \big\{ x\in \partial \Omega^\ell: \ \text{dist} (x, \partial\Omega)\ge \e \big\}.
$$
Thus,  it  suffices to prove  \eqref{4.4-0} with $\gamma_\e^\ell $ in the place of $\gamma_\e$.
Let
$$
D^\ell_\e =\left\{ x\in \Omega^\ell: \text{ dist} (x, \gamma_\e^\ell) < c\, \e \right\}.
$$
Using the assumption that $\Omega^\ell$ is a bounded Lipschitz domain, one may show that
\begin{equation}\label{4.4-4}
\aligned
\int_{\gamma_\e^\ell} 
|\nabla ^ m u_\e|^2\, d\sigma
 & \le  \frac{C}{\e} \int_{D_\e^\ell} |\nabla^m u_\e|^2\, dx + C \e \int_{D_\e^\ell} |\nabla^{m+1} u_\e|^2\, dx\\
& \le \frac{C}{\e^{1+2m}}  \left\{ \int_{\Omega_\e} |u_\e|^2\, dx + C(f)  \right\},
\endaligned
\end{equation}
where $C(f)$ depends on $f$.
We point out that the second inequality in \eqref{4.4-4} follows by
 covering $D_\e^\ell$ with balls of radius $ c\e$ and using \eqref{4.3-1}.
This, together with the energy estimate \eqref{3.2-0}, yields 
$$
\|\nabla^m  u_\e \|_{L^2(\gamma_\e^\ell)}
\le C (f, h) \e^{-m -\frac12},
$$
where $C(f, h)$ depends on $f$ and $h$.
Next, using the equation $-\e^2 \Delta u_\e +\nabla p_\e =f$, we obtain 
$$
\aligned
\|\nabla p_\e \|_{L^2(\gamma_\e^\ell)}
 & \le \e^2 \| \Delta u_\e \|_{L^2(\gamma_\e^\ell)}
+ \| f\|_{L^2(\gamma_\e^\ell)}\\
& \le C (f, h) \e^{-1/2}.
\endaligned
$$
Finally,  observe  that
$$
\aligned
\int_{\gamma_\e^\ell}
|p_\e|^2\, d\sigma
 & \le \frac{C}{\e} \int_{D_\e^\ell} |p_\e|^2\, dx
+ C \e \int_{D_\e^\ell} |\nabla p_\e|^2\, dx\\
& \le \frac{C}{\e} \int_{\Omega_\e} | p_\e|^2 \, dx 
+ C \e^ 5 \int_{D_\e^\ell } |\Delta u_\e|^2\, dx + C (f),\\
& \le \frac{C}{\e} \int_{\Omega_\e} |p_\e|^2\, dx
+ C \e \int_{\Omega_\e} |u_\e|^2\, dx + C (f).
\endaligned
$$
This, together with the energy estimate  \eqref{3.2-0}, yields the second inequality in \eqref{4.4-0}.
\end{proof}

The following is the main technical lemma in  the proof of Theorem \ref{H-thm}.

\begin{lemma}\label{lemma-4.5}
Let $(u_{\e_k}, p_{\e_k})$, $P_{\e_k}$,   and $(u, P)$ be the same as in Lemma \ref{lemma-4.2}.
Also assume that $f\in C^\infty(\R^d; \R^d)$.
Let $P^\ell$ denote the trace of $P$, as a function in $H^1 (\Omega^\ell)$, 
on $\partial\Omega^\ell$.
Then, for any $\varphi \in C_0^\infty (\Omega)$,
\begin{equation}\label{4.5-0}
\int_{\partial \Omega^\ell} n_j P^\ell \varphi\, dx
=\lim_{k \to \infty} 
\int_{\partial \Omega^\ell} n_j p_{\e_k} \varphi\, d \sigma,
\end{equation}
where $1\le \ell \le L$, $1\le j\le d$,  and
$n=(n_1, n_2, \dots, n_d)$ denotes the outward unit normal to $\partial\Omega^\ell$.
\end{lemma}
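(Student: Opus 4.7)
The plan is to use a Tartar-type oscillating test function $\Psi_{\e_k}(x)=W^\ell_j(x/\e_k)\varphi(x)$, which is the natural analog of the test function from the proof of Lemma~\ref{lemma-4.2}, but now we drop the restriction that $\varphi$ vanish on $\partial\Omega^\ell$. Because $W^\ell_j$ vanishes on $\partial\omega^\ell$, we have $\Psi_{\e_k}=0$ on $\Gamma^\ell_{\e_k}$, so the only boundary contribution coming from $\partial\Omega^\ell_{\e_k}$ is the one on $\partial\Omega^\ell$. I first note that every cube $\e(Y+z)$ meeting $\partial\Omega^\ell$ is excluded from the perforation of $\Omega^\ell$ by \eqref{subdomain}, so $p_{\e_k}$ solves the classical Poisson equation $\Delta p_{\e_k}=\mathrm{div}(f)$ in an $O(\e_k)$-strip around $\partial\Omega^\ell$; its trace is therefore well defined and the estimates of Lemma~\ref{Lemma-4.4} apply.

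Testing the Stokes equation on $\Omega^\ell_{\e_k}$ against $\Psi_{\e_k}$ and integrating by parts (noting $\mathrm{div}_y W^\ell_j=0$, so $\mathrm{div}_x\Psi_{\e_k}=W^\ell_{ij}(x/\e_k)\partial_i\varphi$) produces
\begin{equation*}
\int_{\partial\Omega^\ell}p_{\e_k} n_i W^\ell_{ij}(x/\e_k)\varphi\,d\sigma
=\int_{\Omega^\ell_{\e_k}}\{p_{\e_k}W^\ell_{ij}\partial_i\varphi+f_iW^\ell_{ij}\varphi\}\,dx
-\e_k^2\!\int_{\Omega^\ell_{\e_k}}\!\nabla u_{\e_k}\!:\!\nabla\Psi_{\e_k}\,dx+E_{\e_k},
\end{equation*}
with $E_{\e_k}=\e_k^2\int_{\partial\Omega^\ell}\partial_n u_{\e_k}\cdot\Psi_{\e_k}\,d\sigma=O(\e_k^{1/2})$ by Lemma~\ref{Lemma-4.4}. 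Passing to the limit using the strong convergence $P_{\e_k}\to P$ in $L^2$, the weak convergence $W^\ell_{ij}(x/\e_k)\rightharpoonup K^\ell_{ij}$, and the Tartar identity $\e_k\!\int\!\nabla u_{\e_k}\cdot\varphi(\nabla_y W^\ell_j)(x/\e_k)\,dx\to\int u_j\varphi\,dx$ established exactly as in \eqref{4.2-2}--\eqref{4.2-3}, the right-hand side tends to $K^\ell_{ij}\int_{\Omega^\ell}[P\partial_i\varphi+f_i\varphi]\,dx-\int_{\Omega^\ell}u_j\varphi\,dx$. Substituting the Darcy law $u=K^\ell(f-\nabla P)$ from Lemma~\ref{lemma-4.2} (using symmetry of $K^\ell$) and applying the divergence theorem in $\Omega^\ell$ rewrites this as $K^\ell_{ij}\int_{\partial\Omega^\ell}n_i P^\ell\varphi\,d\sigma$.

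The remaining task, and the main technical obstacle, is to remove the oscillating weight $W^\ell_{ij}(x/\e_k)$ from the boundary integral on the left so as to recover the plain trace. Using Lemma~\ref{skew-lemma}, write $W^\ell_{ij}(x/\e)-K^\ell_{ij}=\e\,\partial_{x_k}(\phi^\ell_{kij}(x/\e))$ with $\phi^\ell_{kij}=-\phi^\ell_{ikj}$. The skew-symmetry combined with the symmetry of $n_i n_k$ forces $n_i n_k(\partial_n\phi^\ell_{kij})(x/\e)=0$, so $n_i\partial_{x_k}(\phi^\ell_{kij}(x/\e))$ is a pure tangential divergence on $\partial\Omega^\ell$. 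A tangential integration by parts, together with the trace estimates $\|p_{\e_k}\|_{L^2(\partial\Omega^\ell)}+\|\nabla p_{\e_k}\|_{L^2(\partial\Omega^\ell)}\le C\e_k^{-1/2}$ from Lemma~\ref{Lemma-4.4} and $\|\phi^\ell(x/\e)\|_{L^2(\partial\Omega^\ell)}\le C$ from Lemma~\ref{skew-lemma}, yields $\int_{\partial\Omega^\ell}p_{\e_k}n_i\varphi[W^\ell_{ij}(x/\e_k)-K^\ell_{ij}]\,d\sigma=O(\e_k^{1/2})\to 0$. Combining, for every $j$,
$$K^\ell_{ij}\lim_{k\to\infty}\int_{\partial\Omega^\ell}n_i p_{\e_k}\varphi\,d\sigma = K^\ell_{ij}\int_{\partial\Omega^\ell}n_i P^\ell\varphi\,d\sigma,$$
and invertibility of $K^\ell$ gives \eqref{4.5-0}. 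The delicate balance is exactly the $\e^{1/2}$ one: the skew potential provides an $O(\e)$ prefactor which just barely beats the $O(\e^{-1/2})$ blow-up of the pressure trace, and without the skew-symmetric representation the cancellation would fail.
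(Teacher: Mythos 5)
Your proposal follows the paper's own strategy almost step for step: test the Stokes equation with $W_j^\ell(x/\e_k)\varphi$, use Lemma~\ref{Lemma-4.4} to show the $\e_k^2\partial_n u_{\e_k}$ contribution on $\partial\Omega^\ell$ is $O(\e_k^{1/2})$, pass to the limit in the interior integrals, substitute the Darcy law from Lemma~\ref{lemma-4.2}, and finally strip the oscillating weight $W_j^\ell(x/\e_k)$ from the boundary integral via the skew potential $\phi_{kij}^\ell$ of Lemma~\ref{skew-lemma} and the pressure trace estimates. The tangential-divergence observation and the ``$O(\e)$ beats $O(\e^{-1/2})$'' balance are exactly what the paper exploits in \eqref{4.5-11} and the display that follows.

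The one place you gloss over a real point is the assertion that the limit
$\e_k\int\nabla u_{\e_k}\cdot\varphi\,(\nabla_y W_j^\ell)(x/\e_k)\,dx\to\int u_j\varphi\,dx$
is ``established exactly as in \eqref{4.2-2}--\eqref{4.2-3}.'' That argument tested the cell problem against $u_{\e_k}\varphi$ for $\varphi$ \emph{compactly supported in} $\Omega^\ell$ precisely so the identity $-\e^2\Delta(W_j^\ell(x/\e))+\nabla(\e\pi_j^\ell(x/\e))=e_j$ holds on all of $\operatorname{supp}\varphi$. Here $\varphi\in C_0^\infty(\Omega)$ is nonzero near $\partial\Omega^\ell$, and the cell identity fails in the $O(\e)$ boundary strip (the cubes there are not perforated). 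The paper addresses this with a cut-off $\eta_\e$ concentrated in that strip, splitting the term as $J_1+J_2$ with $|J_2|\le C\e^{1/2}$, then integrating by parts in $J_1$ (where the cell equation is valid) and showing the resulting $\pi_j^\ell$-term $J_{11}$ is also $O(\e^{1/2})$, so only $J_{12}\to\int_{\Omega^\ell}e_j\cdot u\,\varphi\,dx$ survives. You should supply this cut-off step explicitly rather than appeal to \eqref{4.2-2}--\eqref{4.2-3} verbatim; once it is inserted, your proof matches the paper's.
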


\begin{proof}
For notational simplicity we use $\e$ to denote $\e_k$.
Fix $1\le j\le d$ and $1\le \ell \le L$.
Let $\varphi \in C_0^\infty(\Omega)$.
Then
$$
\aligned
 & \e^2\int_{\Omega_\e^\ell} \nabla u_\e \cdot \nabla \left( W_j^\ell  (x/\e) \varphi \right)\, dx\\
& \qquad
=\e \int_{\Omega_\e^\ell}
\nabla u_\e \cdot \nabla W_j^\ell (x/\e) \varphi\, dx
+\e^2 \int_{\Omega_\e^\ell}  \nabla u_\e \cdot W_j^\ell (x/\e) (\nabla \varphi)\, dx,
\endaligned
$$
and by integration by parts,
$$
\aligned
 & \e^2\int_{\Omega_\e^\ell} \nabla u_\e \cdot \nabla \left( W_j^\ell  (x/\e) \varphi \right)\, dx\\
&=\int_{\Omega^\ell} f \cdot W_j^\ell (x/\e) \varphi\, dx
+\int_{\Omega^\ell} P_\e W_j^\ell (x/\e) \cdot \nabla \varphi\, dx
+\int_{\partial \Omega^\ell}
\frac{\partial u_\e}{\partial \nu} \cdot W_j^\ell (x/\e) \varphi\, d\sigma,
\endaligned
$$
where
$$
\frac{\partial u_\e}{\partial \nu} =\e^2 \frac{\partial u_\e}{\partial n} - p_\e n.
$$
By letting $\e\to 0$ we obtain
\begin{equation}\label{4.5-2}
\aligned
 & \lim_{\e \to 0}
\e \int_{\Omega_\e^\ell}
\nabla u_\e \cdot \nabla W_j^\ell (x/\e) \varphi\, dx\\
& =\int_{\Omega^\ell} f \cdot K_j^\ell \varphi\, dx
+\int_{\Omega^\ell}
P K_j^\ell \cdot \nabla \varphi\, dx
+\lim_{\e\to 0}
\int_{\partial \Omega^\ell}
\frac{\partial u_\e}{\partial \nu} \cdot W_j^\ell (x/\e) \varphi\, d\sigma.
\endaligned
\end{equation}
It follows by Lemma \ref{lemma-cell} that 
$\| W_j^\ell (x/\e) \|_{L^2(\partial\Omega^\ell)} \le C$.
This, together with the first inequality in \eqref{4.4-0} with $m=1$, show that 
$$
\Big| \e^2 
\int_{\partial \Omega^\ell}
\frac{\partial u_\e}{\partial n} \cdot W_j^\ell (x/\e) \varphi\, d\sigma \Big|
\le C \e^2 \| (\nabla u_\e) \varphi \|_{L^2(\partial\Omega^\ell)} 
=O(\e^{1/2}).
$$
Hence, by \eqref{4.5-2},
\begin{equation}\label{4.5-3}
\aligned
 & \lim_{\e \to 0}
\e \int_{\Omega_\e^\ell}
\nabla u_\e \cdot \nabla W_j^\ell (x/\e) \varphi\, dx\\
& =\int_{\Omega^\ell} f \cdot K_j^\ell \varphi\, dx
+\int_{\Omega^\ell}
P K_j^\ell \cdot \nabla \varphi\, dx
-\lim_{\e\to 0}
\int_{\partial \Omega^\ell}
p_\e n  \cdot W_j^\ell (x/\e) \varphi\, d\sigma.
\endaligned
\end{equation}

Next, note that 
\begin{equation}\label{4.5.3-0}
\aligned
& \e^2 \int_{\Omega_\e^\ell} 
\nabla \left( W_j^\ell (x/\e) \right) \cdot \nabla (u_\e \varphi)\, dx\\
&=\e \int_{\Omega_\e^\ell} \nabla W_j^\ell (x/\e) \cdot  (\nabla u_\e) \varphi\, dx
+ \e \int_{\Omega_\e^\ell} \nabla W_j^\ell (x/\e) \cdot u_\e (\nabla \varphi)\, dx.
\endaligned
\end{equation}
Choose a cut-off function $\eta_\e$ such that
supp$(\eta_\e) \subset \{ x\in \R^d: \text{dist}(x, \partial \Omega^\ell) \le 2C \e \}$,
$ \eta_\e (x)=1$ if dist$(x, \partial \Omega^\ell) \le C\e$, and $|\nabla \eta_\e|
\le C \e^{-1}$.
Then
\begin{equation}\label{4.5-3-1}
\aligned
 &  \e^2 \int_{\Omega_\e^\ell} 
\nabla \left( W_j^\ell (x/\e) \right) \cdot \nabla (u_\e \varphi)\, dx\\
&= \e^2 \int_{\Omega_\e^\ell} 
\nabla \left( W_j^\ell (x/\e) \right) \cdot \nabla (u_\e (1-\eta_\e) \varphi)\, dx
+ \e^2 \int_{\Omega_\e^\ell} 
\nabla \left( W_j^\ell (x/\e) \right) \cdot \nabla (u_\e \eta_\e  \varphi)\, dx\\
& =J_1 + J_2.
\endaligned
\end{equation}
Using \eqref{4.5.3-0}, \eqref{4.5-3-1}, and 
$$
\aligned
|J_2|
 & \le C \e \left( 
\int_{\{ x\in \R^d: \, \text{dist}(x, \partial \Omega^\ell) \le C \e\} } |\nabla W_j^\ell (x/\e)|^2\, dx \right)^{1/2}
\left\{ \| \nabla u_\e \|_{L^2(\Omega)} + \e^{-1} \| u_\e\|_{L^2(\Omega)} \right\}\\
& \le C \e^{3/2} \left\{ \| \nabla u_\e \|_{L^2(\Omega)} + \e^{-1} \| u_\e\|_{L^2(\Omega)} \right\}\\
& \le \e^{1/2} C (f, h),
\endaligned
$$
we obtain
\begin{equation}\label{4.5-5}
\lim_{\e \to 0}
\e \int_{\Omega_\e^\ell} \nabla W_j^\ell (x/\e) \cdot  (\nabla u_\e) \varphi\, dx
=\lim_{\e \to 0} J_1
\end{equation}

To handle the term $J_1$, we use integration by parts as well as  the fact that
$$
-\e^2 \Delta \left( W_j^\ell (x/\e) \right)
+\nabla \left( \e \pi_j^\ell (x/\e) \right)
=e_j 
$$
in the set $\{ x\in \Omega_\e^\ell: \text{\rm dist}(x, \partial\Omega^\ell)\ge C \e \}$, to obtain
$$
\aligned
J_1
&=\int_{\Omega_\e^\ell}
\e \pi_j^\ell (x/\e) u_\e \cdot \nabla ( (1-\eta_\e) \varphi)\, dx
+\int_{\Omega^\ell}  e_j \cdot u_\e \varphi (1-\eta_\e)\, dx\\
&=J_{11} +J_{12},
\endaligned
$$
where we have used the fact div$(u_\e)=0$ in $\Omega_\e$.
Since
$$
\aligned
|J_{11}|
  & \le C  \left( 
\int_{\{ x\in \R^d: \, \text{dist}(x, \partial \Omega^\ell) \le C \e\} } | \pi_j^\ell (x/\e)|^2\, dx \right)^{1/2}
\| u_\e \|_{L^2(\Omega^\ell)} + C \e \| u_\e\|_{L^2(\Omega^\ell)}\\
& \le C \e^{1/2}  C (f, h),
\endaligned
$$
we see that
\begin{equation}\label{4.5-7}
\lim_{\e \to 0} J_1
=\lim_{\e \to 0} J_{12}
=\int_{\Omega^\ell}
e_j \cdot u \varphi \, dx.
\end{equation}
In view of \eqref{4.5-3}, \eqref{4.5-5} and \eqref{4.5-7}, we have proved that
\begin{equation}\label{4.5-7a}
\lim_{\e \to 0}
\int_{\partial\Omega^\ell}
p_\e n \cdot W_j^\ell (x/\e)\varphi\, d\sigma
=\int_{\Omega^\ell} f\cdot K_j^\ell \varphi\, dx
+ \int_{\Omega^\ell} P K_j^\ell \cdot \nabla \varphi\, dx
-\int_{\Omega^\ell} e_j \cdot u \varphi\, dx.
\end{equation}
Recall that $K^\ell=(K^\ell_{ij})$ is symmetric and 
by Lemma \ref{lemma-4.2},
$$
u= K^\ell (f-\nabla P) \quad \text{ in } \Omega^\ell.
$$
Thus, by \eqref{4.5-7a},
\begin{equation}\label{4.5-8}
\lim_{\e \to 0}
\int_{\partial\Omega^\ell}
p_\e n \cdot W_j^\ell (x/\e)\varphi\, d\sigma
=\int_{\partial\Omega^\ell} P^\ell ( n\cdot K_j^\ell ) \varphi \, d\sigma,
\end{equation}
where $P^\ell$ denotes the trace of $P$ on $\partial\Omega^\ell$.

Finally, we use Lemma \ref{skew-lemma} to obtain 
\begin{equation}\label{4.5-11}
n\cdot \left( W^\ell _j (x/\e) -K^\ell_j \right)
=\frac{\e}{2} 
\left( n_\beta \frac{\partial}{\partial x_\alpha}
-n_\alpha \frac{\partial}{\partial x_\beta}\right) 
\left( \phi^\ell_{\alpha \beta j} (x/\e)\right),
\end{equation}
where the repeated indices $\alpha$ and $\beta$ are summed from $1$ to $d$.
Since $n_\beta \frac{\partial}{\partial x_\alpha} -n_\alpha \frac{\partial}{\partial x_\beta}$
is a tangential derivative on $\partial\Omega^\ell$, we obtain
$$
\aligned
 & \Big| \int_{\partial \Omega^\ell}
p_\e n \cdot \left( W_j^\ell (x/\e) -K^\ell_j \right) \varphi \, d\sigma \Big|\\
&= \frac{\e}{2}
\Big|
\int_{\partial \Omega^\ell}
\phi^\ell_{\alpha \beta j} (x/\e)
\left( n_\beta \frac{\partial}{\partial x_\alpha} -n_\alpha \frac{\partial}{\partial x_\beta} \right) (p_\e \varphi)\, d\sigma \Big|\\
& \le C \e  \| \nabla (p_\e \varphi) \|_{L^2(\partial \Omega^\ell)}\\
&\le C(f, h) \e^{1/2},
\endaligned
$$
where we have used \eqref{skew-2} for the first  inequality and \eqref{4.4-0} for the last.
This, together with \eqref{4.5-8}, yields 
\begin{equation}\label{4.5-12}
\lim_{\e \to 0}
\int_{\partial\Omega^\ell}
p_\e (n \cdot K_j^\ell) \varphi\, d\sigma
=\int_{\partial\Omega^\ell} P^\ell ( n\cdot K_j^\ell ) \varphi \, d\sigma.
\end{equation}
Since the constant matrix $K^\ell=(K_{ij}^\ell)$ is invertible,
the  desired equation \eqref{4.5-0} follows readily  from \eqref{4.5-12}.
\end{proof}

We are now in a position to give the proof of Theorem \ref{H-thm}.

\begin{proof}[Proof of Theorem \ref{H-thm}]

We first prove Theorem \ref{H-thm} under the additional assumption $f\in C^\infty(\R^d; \R^d)$.
Let $\{ \e_k\}$ be a sequence such that $\e_k \to 0$,
$u_{\e_k} \to u$ weakly in $L^2(\Omega; \R^d)$ and $P_{\e_k} \to P$  strongly in $L^2(\Omega)$.
By Lemma \ref{lemma-4.2},  $P\in H^1(\Omega^\ell)$ and
$u=K^\ell (f-\nabla P)$ in $\Omega^\ell$  for $1\le \ell \le L$.
It  suffices to show that $P\in H^1(\Omega)$.
This would imply that $P$ is  a weak solution of the Neumann problem,
\begin{equation}\label{N-4}
\left\{
\aligned
\text{\rm div} (K (f-\nabla P))  & =0 & \quad & \text{ in } \Omega,\\
n \cdot K(f-\nabla P) & =n \cdot h& \quad & \text{ on } \partial\Omega.
\endaligned
\right.
\end{equation}
As a result, we may deduce that as $\e\to 0$,
$u_\e \to u_0$ weakly in $L^2(\Omega; \R^d)$ and
$P_\e -\fint_{\Omega} P_\e \to P_0$ strongly in $L^2(\Omega)$, where $u_0= K (f-\nabla P_0)$ in $\Omega$  and
$P_0$ is the unique weak solution of \eqref{N-4} with $\int_\Omega P_0\, dx=0$.

To prove $u\in H^1(\Omega)$, we use the assumption  $f\in C^\infty(\R^d; \R^d)$
and  Lemma \ref{lemma-4.5} to obtain
$$
\sum_{\ell=1}^L
\int_{\partial \Omega^\ell} n_j P^\ell \varphi\, d\sigma
=\lim_{k \to \infty}
\sum_{\ell=1}^L
\int_{\partial\Omega^\ell} n_j p_{\e_k} \varphi \, d\sigma,
$$
for any $\varphi \in C_0^\infty(\Omega)$ and $1\le j \le d$,
where $P^\ell$ denotes the trace of $P$, as a function in $H^1(\Omega^\ell)$, on $\partial\Omega^\ell$.
Since $p_\e$ is continuous in $\Omega_\e$,  we have
$$
\sum_{\ell=1}^L
\int_{\partial\Omega^\ell} n_j p_{\e} \varphi \, d\sigma=0.
$$
It follows that
$$
\sum_{\ell=1}^L
\int_{\partial \Omega^\ell} n_j P^\ell \varphi\, d\sigma=0
$$
for $1\le j \le d$ and for any $\varphi \in C_0^\infty(\Omega)$.
This, together with the fact that $P\in H^1(\Omega^\ell)$ for $1\le \ell \le L$, gives
$$
\aligned
\int_\Omega P\frac{\partial \varphi}{\partial x_j}\, dx
&=\sum_{\ell=1}^L  \int_{\Omega^\ell}  P\frac{\partial \varphi}{\partial x_j}\, dx\\
&=-\sum_{\ell=1}^L \int_{\Omega^\ell} \frac{\partial P}{\partial x_j} \varphi\, dx
+\sum_{\ell=1}^L \int_{\partial\Omega^\ell} n_j P^\ell \varphi\, d\sigma\\
& =-\sum_{\ell=1}^L \int_{\Omega^\ell} \frac{\partial P}{\partial x_j} \varphi\, dx.
\endaligned
$$
As a result, we obtain  $P\in H^1(\Omega)$.

In the general case $f\in L^2(\Omega; \R^d)$, we choose a sequence of functions $\{ f_m \}$ in $C^\infty(\R^d; \R^d)$
such that $\| f_m - f\|_{L^2(\Omega)} \to 0$ as $m \to \infty$.
Let $(u_{\e, m}, p_{\e, m})$ denote the weak solution of \eqref{D-4}
with $f_m$ in the place of $f$ and with $\int_{\Omega_\e} p_{\e, m} \, dx=0$.
By the energy estimates \eqref{3.2-0} and \eqref{cor-p-0} we obtain 
\begin{equation}
\| u_\e - u_{\e,m} \|_{L^2(\Omega)}
+ \| P_\e-P_{\e, m} \|_{L^2(\Omega)}
\le C \| f- f_m\|_{L^2(\Omega)},
\end{equation}
where $P_{\e, m}$ denotes the extension of $p_{\e, m}$,  defined by \eqref{ext-1}.
Let $u_{0, m}=K (f_m -\nabla P_{0, m})$,
where $P_{0, m}$ is the unique solution of \eqref{N-4} with $f_m$ in the place of $f$  and with 
$\int_\Omega P_{0, m} \, dx =0$.
Note that
$$
\aligned
\| P_\e -\fint_\Omega P_\e  - P_0\|_{L^2(\Omega)}
\le &  \| P_\e  -P_{\e, m} - \fint_{\Omega} (P_\e- P_{\e, m}) \|_{L^2(\Omega)}\\
& \qquad + \| P_{\e, m} - \fint_{\Omega} P_{\e, m} -P_{0,m} \|_{L^2(\Omega)} 
+ \| P_{0,m} -P_0\|_{L^2(\Omega)}\\
&\le C \| f-f_m\|_{L^2(\Omega)}
+   \| P_{\e, m} - \fint_{\Omega} P_{\e, m} -P_{0,m} \|_{L^2(\Omega)}.
\endaligned
$$
Since $P_{\e, m} -\fint_\Omega P_{\e, m} \to P_{0, m}$ in  $L^2(\Omega)$, as $\e \to 0$,
we see that
$$
\limsup_{\e \to 0} 
\| P_\e -\fint_\Omega P_\e  - P_0\|_{L^2(\Omega)}
\le C \| f-f_m \|_{L^2(\Omega)}.
$$
By letting $m\to\infty$, we obtain $P_\e -\fint_\Omega P_\e \to P_0$ in $L^2(\Omega)$,
as $\e \to 0$.

Finally, let $v \in L^2(\Omega; \R^d)$.
Note that
$$
\aligned
 & \Big| \int_\Omega (u_\e -u_0) v\, dx \Big |\\
& \le \Big| \int_{\Omega} (u_\e - u_{\e, m} ) v\, dx\Big|
+ \Big| \int_\Omega (u_{\e, m} -u_{0, m }) v \, dx\Big|
+\Big| \int_\Omega (u_{0, m} - u_0) v \, dx\Big|\\
& \le \| u_\e -u_{\e, m} \|_{L^2(\Omega)} \| v \|_{L^2(\Omega)}
+  \Big| \int_\Omega (u_{\e, m} -u_{0, m }) v \, dx\Big|
+ \| u_{0, m} - u_0 \|_{L^2(\Omega)} \| v\|_{L^2(\Omega)}\\
&\le  C \| f-f_m\|_{L^2(\Omega)} \| v\|_{L^2(\Omega)}
+ \Big| \int_\Omega (u_{\e, m} -u_{0, m }) v \, dx\Big|.
\endaligned
$$
By letting $\e \to 0$ and then $m \to \infty$, we see that $u_\e \to u_0$ weakly in $L^2(\Omega; \R^d)$.
\end{proof}

%%%%%%%%%%%%%%

\section{Convergence rates and proof of Theorem \ref{main-thm-2}}\label{section-C}

Throughout the rest of this paper, unless indicated otherwise,
 we will assume that $\Omega^\ell, 1\le \ell \le L$,  are $C^{2, 1/2}$ domains satisfying
the interface condition \eqref{condition-1}. 
Given $f\in L^2 (\Omega; \R^d)$, let $P_0\in H^1(\Omega)$ be the weak solution of
\begin{equation}\label{P-0}
\left\{
\aligned
-\text{\rm div} \left( K (f-\nabla P_0) \right) & =0 & \quad & \text{ in } \Omega,\\
n\cdot K(f-\nabla P_0) & =0 & \quad & \text{ on } \partial\Omega,
\endaligned
\right.
\end{equation}
with $\int_\Omega P_0\, dx=0$, where  the coefficient matrix $K$ is given by \eqref{K-0}.
Since the interface $\Sigma$ and $\partial\Omega$ are of $C^{2, 1/2}$, it follows from \cite[Theorem 1.1]{Zhuge} that
\begin{equation}\label{P-reg}
\aligned
\| \nabla P_0 \|_{C^{\alpha}(\Omega)} &  \le C \| f\|_{C^{ \alpha}(\Omega)},\\
\| \nabla P_0\|_{C^{1, \beta}(\Omega)} & \le C \| f \|_{C^{1, \beta}(\Omega)},
\endaligned
\end{equation}
for $0< \alpha< 1$ and $0< \beta \le 1/2$.

Let
\begin{equation}\label{V01}
V_\e  (x)= \sum_{\ell=1}^L W^\ell (x/\e) (f-\nabla P_0) \chi_{\Omega^\ell} \quad \text{ in } \Omega, 
\end{equation}
where the 1-periodic matrix $W^\ell (y)$ is defined by \eqref{cor-1} .
Note that $V_\e =0$ in $\Gamma_\e$.
For each $\ell$, using
\begin{equation}\label{V10}
-\e^2 \Delta \left\{ W_j^\ell (x/\e)\right\}  +\nabla \left \{ \e \pi_j^\ell (x/\e) \right\}
= e_j \quad \text{ in }  \bigcup_{z\in \mathbb{Z}^d} \e (z+{Y^\ell_f}),
\end{equation}
one may show that for any $\psi\in H^1(\Omega^\ell_\e; \R^d)$ with $\psi=0$ on $ \Gamma^\ell_\e$,
\begin{equation}\label{V11}
\aligned
 & \Big|  \e \int_{\Omega^\ell_\e}
  \nabla W_j^\ell (x/\e) \cdot \nabla \psi \, dx-\e \int_{\Omega_\e^\ell} \pi_j ^\ell (x/\e) \, \text{div}(\psi)\, dx
-\int_{\Omega_\e^\ell} \psi_j  \, dx \Big|\\
&   \le C \e^{3/2} \|\nabla \psi \|_{L^2(\Omega^\ell_\e)}.
\endaligned
\end{equation}
To see \eqref{V11},  let
$$
\mathcal{O}_\e^\ell = \bigcup_z  \e (z+ Y_f^\ell),
$$
where  $z\in \mathbb{Z}^d$ and the union is taken over those $z$'s for which  $\e (z + Y)\subset \Omega^\ell$.
Using  $|\Omega_\e^\ell \setminus \mathcal{O}_\e^\ell| \le C \e^{1/2}$ and
$\|\psi \|_{L^2(\Omega^\ell_\e)} \le C \e \|\nabla \psi\|_{L^2(\Omega^\ell_\e)}$, one may show that
each integral  in the left-hand side  of \eqref{V11}, with $\Omega_\e^\ell \setminus \mathcal{O}^\ell_\e$ in the place of
$\Omega_\e^\ell$, 
is  bounded by the right-hand side of \eqref{V11}.
By using integration by parts and \eqref{V10}, it follows that the left-hand side of \eqref{V11} with 
$\mathcal{O}_\e^\ell$ in the place of $\Omega_\e^\ell$ is bounded by 
$$
\aligned
& C \e \left(\int_{\partial \mathcal{O}_\e^\ell} \left( |\nabla W^\ell (x/\e)| +|\pi^\ell (x/\e)|\right)^2 \, d\sigma \right)^{1/2}
\left(\int_{\partial \mathcal{O}_\e^\ell} |\psi|^2 \, d\sigma \right)^{1/2}\\
& \le C \e^{3/2} \|\nabla \psi\|_{L^2(\Omega^\ell_\e)},
\endaligned
$$
where we have used \eqref{cell-2} and  the observation,
$$
\aligned
\|\psi\|_{L^2(\partial \mathcal{O}_\e^\ell)}
 & \le C \e^{-1/2} \| \psi \|_{L^2(\Omega_\e^\ell)}
+ C \e^{1/2} \|\nabla \psi \|_{L^2(\Omega_\e^\ell)}\\
& \le C \e^{1/2} \| \nabla \psi \|_{L^2(\Omega_\e^\ell)}.
\endaligned
$$
From \eqref{V11} we deduce further  that
\begin{equation}\label{V11a}
\aligned
 & \Big|  \e \int_{\Omega^\ell_\e}
  \nabla W_j^\ell (x/\e) \cdot \nabla \psi \, dx
-\int_{\Omega_\e^\ell} \psi_j  \, dx \Big|\\
& \qquad  \le C \e^{1/2} \left\{ \e \|\nabla \psi \|_{L^2(\Omega^\ell_\e)}
+ \e^{1/2} \|\text{\rm div}(\psi) \|_{L^2(\Omega^\ell_\e)}\right\}
\endaligned
\end{equation}
for any $\psi\in H^1(\Omega^\ell_\e; \R^d)$ with $\psi=0$ on $\Gamma^\ell_\e$.

\begin{thm}\label{theorem-V0}
Let $(u_\e, p_\e)\in H_0^1(\Omega_\e; \R^d)\times L^2_0(\Omega_\e)$ be a weak solution of \eqref{Stokes-1}.
Let $V_\e$ be given by \eqref{V01}.
Then
\begin{equation}\label{V100}
\aligned
& \Big|\e^2\sum_{\ell=1}^L \int_{\Omega^\ell_\e}  (\nabla u_\e-\nabla V_\e) \cdot \nabla \psi\, dx 
-\int_{\Omega_\e} 
( p_\e -P_0 ) \, \text{\rm div} (\psi)\, dx
 \Big|\\
&\le C \e^{1/2} \| f \|_{C^{1, 1/2}(\Omega)} \left\{ \e \|\nabla \psi \|_{L^2(\Omega_\e)}
+ \e^{1/2}  \| \text{\rm div}(\psi)\|_{L^2(\Omega_\e)} \right\},
\endaligned
\end{equation}
for any $\psi \in H_0^1(\Omega_\e; \R^d) $. 
\end{thm}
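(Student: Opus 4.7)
The plan is to match the weak formulation of \eqref{Stokes-1} for $u_\e$ against a consequence of the cell identity \eqref{V11a} applied to $V_\e$. Write $g=f-\nabla P_0$, so that $V_{\e,i}(x)=W^\ell_{ij}(x/\e)\,g_j(x)$ in $\Omega^\ell$. Testing \eqref{Stokes-1} against $\psi\in H_0^1(\Omega_\e;\R^d)$ gives
\[
\e^2\int_{\Omega_\e}\nabla u_\e\cdot\nabla\psi\,dx-\int_{\Omega_\e}p_\e\,\text{\rm div}(\psi)\,dx=\int_{\Omega_\e}f\cdot\psi\,dx,
\]
so it will suffice to establish, for each $\ell$,
\[
\e^2\int_{\Omega^\ell_\e}\nabla V_\e\cdot\nabla\psi\,dx=\int_{\Omega^\ell_\e}g\cdot\psi\,dx+\mathcal{E}_\ell,
\]
with $|\mathcal{E}_\ell|\le C\e^{1/2}\|f\|_{C^{1, 1/2}(\Omega)}\bigl\{\e\|\nabla\psi\|_{L^2(\Omega^\ell_\e)}+\e^{1/2}\|\text{\rm div}(\psi)\|_{L^2(\Omega^\ell_\e)}\bigr\}$. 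Summing in $\ell$ (the interface $\Sigma$ has measure zero) and subtracting from the Stokes identity, the $f\cdot\psi$ contributions cancel and what remains is $\int_{\Omega_\e}\nabla P_0\cdot\psi\,dx$. Since $P_0\in H^1(\Omega)$ (by Theorem \ref{main-thm-1}) and $\psi=0$ on $\partial\Omega_\e$, integration by parts replaces this by $-\int_{\Omega_\e}P_0\,\text{\rm div}(\psi)\,dx$, producing \eqref{V100}.

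To prove the identity for $V_\e$ on a single $\Omega^\ell_\e$, I expand
\[
\e^2\int_{\Omega^\ell_\e}\nabla V_\e\cdot\nabla\psi\,dx=\e\sum_j\int_{\Omega^\ell_\e}g_j\,\nabla W^\ell_j(x/\e)\cdot\nabla\psi\,dx+\e^2\sum_{i,j,k}\int_{\Omega^\ell_\e}W^\ell_{ij}(x/\e)(\partial_k g_j)(\partial_k\psi_i)\,dx.
\]
The second integral is $O\bigl(\e^2\|f\|_{C^{1, 1/2}(\Omega)}\|\nabla\psi\|_{L^2(\Omega^\ell_\e)}\bigr)$ by Cauchy--Schwarz, \eqref{cell-1}, and the regularity estimate \eqref{P-reg}, which yields $\|\nabla g\|_{L^\infty(\Omega^\ell)}\le C\|f\|_{C^{1, 1/2}(\Omega)}$. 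The main step is to handle the first integral by applying \eqref{V11a} with $g_j\psi$ in place of $\psi$; this is admissible because $g_j\psi\in H^1(\Omega^\ell_\e;\R^d)$ still vanishes on $\Gamma^\ell_\e$. By the product rule, the left-hand side of \eqref{V11a} with this new test function reads
\[
\e\int_{\Omega^\ell_\e}g_j\,\nabla W^\ell_j(x/\e)\cdot\nabla\psi\,dx+\e\sum_{i,k}\int_{\Omega^\ell_\e}(\partial_{y_k}W^\ell_{ij})(x/\e)\,\psi_i\,(\partial_k g_j)\,dx,
\]
and the commutator term is again $O\bigl(\e^2\|f\|_{C^{1, 1/2}(\Omega)}\|\nabla\psi\|_{L^2}\bigr)$ via \eqref{cell-1} combined with Lemma \ref{lemma-2.2} ($\|\psi\|_{L^2(\Omega^\ell_\e)}\le C\e\|\nabla\psi\|_{L^2(\Omega^\ell_\e)}$). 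The error in \eqref{V11a} itself, applied to $g_j\psi$, is of the required form once one notes $\|\nabla(g_j\psi)\|_{L^2}\le C\|f\|_{C^{1, 1/2}(\Omega)}\|\nabla\psi\|_{L^2}$ and $\|\text{\rm div}(g_j\psi)\|_{L^2}\le C\|f\|_{C^{1, 1/2}(\Omega)}\bigl\{\|\text{\rm div}(\psi)\|_{L^2}+\e\|\nabla\psi\|_{L^2}\bigr\}$, again by the product rule and Lemma \ref{lemma-2.2}. Summing $\sum_j\int_{\Omega^\ell_\e}g_j\psi_j\,dx=\int_{\Omega^\ell_\e}g\cdot\psi\,dx$ produces the desired identity on $\Omega^\ell_\e$.

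The only genuine obstacle is that \eqref{V11a} is strictly linear in its test function, while $V_\e$ carries the slowly varying coefficient $g$. The remedy is to absorb $g_j$ into the test function as $g_j\psi$ and then control the commutator errors that arise when derivatives fall on $g_j$ rather than $\psi$. The $C^{1, 1/2}$ hypothesis on $f$ is used precisely here: together with \eqref{P-reg}, it provides $\nabla g\in L^\infty(\Omega^\ell)$, ensuring all commutators are of order $\e^2\|\nabla\psi\|_{L^2}$ and do not degrade the sharp $O(\e^{1/2})$ rate in \eqref{V100}.
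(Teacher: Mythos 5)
Your proof is correct and follows essentially the same route as the paper: both apply \eqref{V11a} with $\psi\,(f-\nabla P_0)$ in the role of the test function, control the commutator terms that arise when a derivative lands on the slowly varying factor $f-\nabla P_0$ (using \eqref{cell-1}, the Poincar\'e inequality of Lemma \ref{lemma-2.2}, and the regularity estimate \eqref{P-reg}), sum over $\ell$, and then combine with the weak formulation of \eqref{Stokes-1} after replacing $\int_{\Omega_\e}\nabla P_0\cdot\psi$ by $-\int_{\Omega_\e}P_0\,\mathrm{div}(\psi)$ via $P_0\in H^1(\Omega)$. The only cosmetic difference is that the paper collects the commutator errors into a single pointwise bound for $|\e^2\nabla V_\e\cdot\nabla\psi-\e\nabla W^\ell(x/\e)\cdot\nabla(\psi(f-\nabla P_0))|$ before integrating, whereas you isolate each error term and bound it individually; the mathematical content is the same.
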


\begin{proof}
We apply \eqref{V11a} with $\psi (f_j -\frac{\partial P_0}{\partial x_j})$ in the place of $\psi$.
Using 
$$
\aligned
 & | \e^2 \nabla V_\e \cdot \nabla \psi - \e \nabla W^\ell (x/\e) \cdot \nabla 
\left( \psi (f-\nabla P_0) \right)|\\
& \le C \left\{ \e^2 |W^\ell (x/\e)| |\nabla \psi|
+ C \e |\nabla W^\ell(x/\e)|  |\psi|\right\} | \nabla (f-\nabla P_0)|
\endaligned
$$
in $\Omega^\ell_\e$, we obtain 
$$
\aligned
& \Big|
\e^2 \int_{\Omega_\e^\ell}
\nabla V _\e \cdot \nabla \psi\, dx
-\int_{\Omega_\e^\ell} (f-\nabla P_0)\cdot \psi\, dx \Big|\\
& \le C\e^{3/2}  \left( \| f\|_\infty + \| \nabla f\|_\infty
+ \| \nabla P_0\|_\infty + \|\nabla^2 P_0 \|_\infty \right)
  \|\nabla \psi\|_{L^2(\Omega^\ell_\e)}\\
  &\qquad
  + C \e  (\| f\|_\infty+ \|\nabla P_0 \|_\infty) \|\text{\rm div}(\psi)\|_{L^2(\Omega_\e^\ell)}.
\endaligned
$$
This, together with 
$$
\int_{\Omega_\e} (f-\nabla P_0) \cdot \psi\, dx
=\e^2 \int_{\Omega_\e}\nabla u_\e\cdot \nabla \psi\, dx
-\int_{\Omega_\e} (p_\e -P_0) \, \text{\rm div}(\psi)\, dx,
$$
gives \eqref{V100}.
\end{proof}

Let
\begin{equation}\label{U}
U_\e=V_\e + \Phi_\e,
\end{equation}
where $\Phi_\e$ is a corrector to be constructed so that 
 $U_\e \in H_0^1(\Omega_\e; \R^d)$,
\begin{equation}\label{div-e}
\|\text{\rm div} (U_\e) \|_{L^2(\Omega_\e)}
\le C  \e^{1/2} \| f\|_{C^{1, 1/2} (\Omega)},
\end{equation}
and that 
\begin{equation}\label{P-est}
\e \|\nabla \Phi_\e\|_{L^2(\Omega^\ell_\e)}
\le C \e^{1/2} \| f\|_{C^{1, 1/2}(\Omega)}
\end{equation}
for $1\le \ell \le L$.

Assuming that  such corrector $\Phi_\e$ exists, we give the proof of Theorem \ref{main-thm-2}.

\begin{proof}[Proof of Theorem \ref{main-thm-2}]

By letting $\psi=u_\e-U_\e=u_\e -V_\e -\Phi_\e \in H_0^1(\Omega_\e; \R^d)$ in \eqref{V100},
we obtain 
$$
\aligned
  & \e^2 \| \nabla u_\e-\nabla V_\e\|^2_{L^2(\Omega_\e)}\\
 & \le \e^2 \|\nabla u_\e-\nabla V_\e\|_{L^2(\Omega_\e)} \| \nabla \Phi_\e\|_{L^2(\Omega_\e)}
 + \| p_\e -P_0-\beta \|_{L^2(\Omega_\e)}
 \| \text{\rm div} (U_\e)  \|_{L^2(\Omega_\e)}\\
& \qquad+ C \e^{1/2}
\ \| f\|_{C^{1, 1/2}(\Omega)} 
\left\{ \e \| \nabla u_\e -\nabla V_\e \|_{L^2(\Omega_\e)}
+\e \|\nabla \Phi_\e\|_{L^2(\Omega_\e)}
+ \e^{1/2} \| \text{\rm div} (U_\e) \|_{L^2(\Omega_\e)} \right\}\\
& \le C \e^{3/2} \|  f\|_{C^{1, 1/2} (\Omega)} \| \nabla u_\e-\nabla V_\e\|_{L^2(\Omega_\e)}\\
&\qquad
+ C \e^{1/2} \| f \|_{C^{1, 1/2} (\Omega)} \| p_\e -P_0-\beta\|_{L^2(\Omega_\e)}
+ C \e \| f\|_{C^{1, 1/2} (\Omega)}^2,
\endaligned
$$
for any $\beta \in \R$, where we have used \eqref{div-e} and \eqref{P-est} for the last inequality.
By the Cauchy inequality, this implies that
\begin{equation}\label{pp-e0}
\e^2 \| \nabla u_\e-\nabla V_\e\|^2_{L^2(\Omega_\e)}
\le  C \e \| f\|^2_{C^{1, 1/2}(\Omega)}
+ C \e^{1/2} \| f\|_{C^{1, 1/2}(\Omega)} \| p_\e-P_0 -\beta \|_{L^2(\Omega_\e)}.
\end{equation}
We should point out that both $V_\e$ and $\Phi_\e$ are not in $H^1(\Omega_\e; \R^d)$.
In the estimates above (and thereafter)  we have used the convention that
$$
\|\ \nabla \psi  \|_{L^2(\Omega_\e)}
=\left(\sum_{\ell=1}^L  \| \nabla \psi \|^2_{L^2(\Omega_\e^\ell)} \right)^{1/2},
$$
where $\psi \in H^1(\Omega_\e^\ell)$ for $1\le \ell \le L$.

Next, we choose  $\beta=\fint_{\Omega_\e} (p_\e-P_0)$. By Lemma \ref{lemma-div},
 there exists  $v_\e\in H^1_0 (\Omega_\e; \R^d)$ such that
$$
\aligned
\text{\rm div}(v_\e)  & = p_\e-P_0 -\beta \quad \text{ in } \Omega_\e, \\
\e \| \nabla v_\e \|_{L^2(\Omega_\e)} &  \le C  \| p_\e -P_0 -\beta\|_{L^2(\Omega_\e)}.
\endaligned
$$
By letting $\psi_\e=v_\e$ in \eqref{V100}, we obtain 
\begin{equation}\label{pp-e}
\| p_\e-P_0 -\beta\|_{L^2(\Omega_\e)}
\le C \e \| \nabla u_\e -\nabla V_\e \|_{L^2(\Omega_\e)}
+C \e^{1/2} \| f\|_{C^{1, 1/2}(\Omega)}.
\end{equation}
By combining \eqref{pp-e0} with \eqref{pp-e}, it is not hard to see that
\begin{equation}\label{pp-ee}
\e\| \nabla u_\e -\nabla V_\e \|_{L^2(\Omega_\e)} + \| p_\e-P_0 -\beta\|_{L^2(\Omega_\e)}
\le  C \e^{1/2} \| f\|_{C^{1, 1/2} (\Omega)}.
\end{equation}
This, together  with $\| u_\e -V_\e\|_{L^2(\Omega_\e^\ell)} \le C \e \| \nabla u_\e -\nabla V_\e \|_{L^2(\Omega^\ell_\e)}$,  
gives the bound for the first term in  \eqref{main-1}.
Also, note that 
$$
\| \e \nabla V_\e  -\nabla W^\ell (x/\e) (f-\nabla P_0) \|_{L^2(\Omega_\e^\ell)}
\le C \e \|\nabla (f-\nabla P_0)\|_\infty.
$$
Thus,
$$
\| \e \nabla u_\e - \nabla W^\ell (x/\e) (f-\nabla P_0)\|_{L^2(\Omega_\e^\ell)}
\le C \e^{1/2} \| f\|_{C^{1, 1/2}(\Omega)}.
$$

Finally, to estimate the pressure, we let $Q_\e$  be the extension of $(P_0+\beta)|_{\Omega_\e}$  to $\Omega$, using 
the formula in \eqref{ext-1}.
Note that
$$
\| Q_\e- (P_0 +\beta)\|^2_{L^2(\Omega)}
=\sum_{\ell, z}  \int_{\e (Y_s^\ell +z)}
\Big|P_0 - \fint_{\e (Y_f^\ell +z)}P_0 \Big|^2\, dx,
$$
where the sum is taken over those $(\ell, z)$'s for which $ z \in \mathbb{Z}^d$ and $\e (Y+z) \subset \Omega^\ell$.
It follows that
$$
\aligned
\| Q_\e - (P_0+\beta)\|_{L^2(\Omega)}
 &\le C \e \| \nabla P_0 \|_{L^\infty(\Omega)}\\
&\le C \e \| f\|_{C^{1, 1/2}(\Omega)}.
\endaligned
$$
As a result, by \eqref{pp-ee}, we obtain 
$$
\aligned
\| P_\e - P_0 -\beta\|_{L^2(\Omega)}
& \le \| P_\e - Q_\e \|_{L^2(\Omega)}
+ \| Q_\e - (P_0 +\beta)\|_{L^2(\Omega)}\\
&\le  C \| p_\e - P_0 -\beta\|_{L^2(\Omega_\e)}
+  C \e \| f\|_{C^{1, 1/2} (\Omega)}\\
& \le C \e^{1/2} \| f \|_{C^{1, 1/2}(\Omega)},
\endaligned
$$
where $\beta =-\fint_{\Omega_\e} P_0 $.
Clearly, we may replace $\beta$ by $\fint_\Omega (P_\e -P_0)=\fint_\Omega P_\e$.
This gives the bound for the second term in \eqref{main-1}.
\end{proof}

To complete the proof of Theorem \ref{main-thm-2},
it remains to construct a corrector $\Phi_\e$ such that $V_\e +\Phi_\e \in H_0^1(\Omega_\e; \R^d)$ and
\eqref{div-e}-\eqref{P-est} hold.
This will be done in the next three sections.
More precisely, we   let
\begin{equation}
\Phi_\e = \Phi_\e^{(1)} + \Phi_\e^{(2)} + \Phi_\e^{(3)},
 \end{equation}
where  $\Phi_\e^{(1)}$ is a corrector for the divergence operator with the properties that
\begin{equation}\label{PP-1}
\left\{
\aligned
&  \Phi_\e^{(1)}   \in H_0^1 (\Omega_\e; \R^d),\\
&   \e \| \nabla \Phi_\e^{(1)} \|_{L^2(\Omega_\e)}   \le C \e^{1/2} \| f\|_{C^{1, 1/2} (\Omega)}, \\
 &\| \text{\rm div}(\Phi_\e^{(1)} +V_\e )  \|_{L^2(\Omega_\e^\ell)}
  \le C \e^{1/2}\| f\|_{C^{1, 1/2}(\Omega)},
\endaligned
\right.
\end{equation}
$\Phi_\e^{(2)} $ is a corrector  for the boundary data of $V_\e$  on $\partial\Omega$ with the properties that
\begin{equation}\label{PP-2}
\left\{
\aligned
  & \Phi_\e^{(2)}   \in H^1 (\Omega_\e; \R^d) \quad \text{ and } \quad \Phi^{(2)}_\e =0 \quad \text{ on } \Gamma_\e,\\
  &  \Phi_\e^{(2)} +V_\e =0\quad  \text{ on } \partial\Omega,\\
  &  \e \| \nabla \Phi_\e^{(2)} \|_{L^2(\Omega_\e)}  + \| \text{\rm div} (\Phi_\e^{(2)}) \|_{L^2(\Omega_\e)}
    \le C \e^{1/2} \| f\|_{C^{1, 1/2} (\Omega)}, 
\endaligned
\right.
\end{equation}
and 
$\Phi_\e^{(3)} $ is a corrector  for the interface $\Sigma$ with the properties that
\begin{equation}\label{PP-3}
\left\{
\aligned
& \Phi_\e^{(3)}   \in H^1 (\Omega^\ell_\e;  \R^d) \quad \text{ and }  \quad \Phi_\e^{(3)}= 0 \quad \text{ on } \partial \Omega_\e,\\
& V_\e + \Phi_\e^{(3)}\in H^1(\Omega_\e; \R^d),\\
  & \e \| \nabla \Phi_\e^{(3)} \|_{L^2(\Omega^\ell_\e)}  
  + \| \text{\rm div} (\Phi_\e^{(3)}) \|_{L^2(\Omega_\e^\ell)} \le C \e^{1/2} \| f\|_{C^{1, 1/2} (\Omega)},
\endaligned
\right.
\end{equation}
for $1\le \ell \le L$.
It is not hard to verify that the desired property  $V_\e +\Phi_\e \in H_0^1(\Omega_\e; \R^d)$
as well as the estimates \eqref{div-e}-\eqref{P-est}
follows from \eqref{PP-1}-\eqref{PP-3}.

%%%%%%%%%%%%

\section{Correctors for the divergence operator}\label{section-D}

Let $V_\e$ be given by \eqref{V01}. Note that since div$(W_j^\ell(x/\e))=0$ in $\R^d$, 
\begin{equation}\label{div-1}
\text{\rm div} (V_\e)
=W^\ell  (x/\e) \nabla (f-\nabla P_0) \quad \text{ in } \Omega^\ell_\e.
\end{equation}
In this section we construct a corrector $\Phi_\e^{(1)}$ that satisfies \eqref{PP-1}.
The approach is similar to that used  in \cite{Mik-1996, Shen-Darcy-1}.

For $1\le \ell \le L$ and $1\le \, i, j \le d$, let $\Theta_{ij}^\ell= (\Theta^\ell_{1ij}, \dots, \Theta^\ell_{dij}) $ be a 1-periodic function in $H^1_{loc}(\R^d; \R^d)$
such that
\begin{equation}\label{Theta}
\left\{
\aligned
\text{\rm div} (\Theta^\ell_{ij}) & = -W^\ell_{ij}  + | Y_f|^{-1} K^\ell_{ij}  & \quad & \text{ in } Y_f,\\
\Theta_{ij}^\ell & =0 & \quad & \text{ in } Y_s.
\endaligned
\right.
\end{equation}
 Fix $\varphi \in C_0^\infty(B(0, 1/8))$ such that $\varphi\ge 0$ and $\int_{\R^d} \varphi\, dx =1$.
 Define
 \begin{equation}
S_\e (\psi) (x) =\psi * \varphi_\e (x)=\int_{\R^d} \psi (y) \varphi_\e (x-y)\, dy,
\end{equation}
where $\varphi_\e (x)=\e^{-d} \varphi (x)$.
Let $\Phi_\e^{(1)} =( \Phi_{\e, 1}^{(1)}, \dots, \Phi_{\e, d}^{(1)} )$, where, for $x\in \Omega_\e^\ell$, 
\begin{equation}\label{Phi-1}
\Phi_{\e, k} ^{(1)} (x)
=\e \eta^\ell_\e (x)  \Theta^\ell_{kij} (x/\e) \frac{\partial}{\partial x_i} 
S_\e  \left( f_j -\frac{\partial P_0}{\partial x_j} \right),
\end{equation}
and $P_0$ is the solution of \eqref{P-0}.
The function $\eta_\e^\ell $ in \eqref{Phi-1} is a cut-off function in $C_0^\infty(\Omega^\ell)$ with the properties that
$|\nabla \eta_\e^\ell|\le C\e^{-1}$ and 
$$
\left\{
\aligned
 & \eta_\e^\ell (x) =0 & \quad & \text{ if dist} (x, \partial\Omega^\ell)\le 2d \e,\\
 & \eta^\ell_\e (x)=1 & \quad & \text{ if  } x\in \Omega^\ell \text{ and dist} (x, \partial \Omega^\ell)\ge 3d \e.
 \endaligned
 \right.
 $$
 As  a result, $\Phi_\e^{(1)}$ vanishes near $\partial \Omega^\ell$.
 
 \begin{thm}\label{theorem-PP1}
 Let $\Phi_\e^{(1)}$ be defined by \eqref{Phi-1}.
 Then \eqref{PP-1}  holds.
 \end{thm}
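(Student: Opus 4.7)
The plan is to verify the three properties of \eqref{PP-1} directly from the definition \eqref{Phi-1}. Membership in $H_0^1(\Omega_\e; \R^d)$ is immediate: on each internal hole boundary the factor $\Theta^\ell_{kij}(x/\e)$ vanishes by the second line of \eqref{Theta}, while on $\partial \Omega^\ell$ (which contains both $\partial \Omega$ and the interface $\Sigma$) the cutoff $\eta_\e^\ell$ vanishes by construction. Throughout I will use the cell-problem energy estimates for $\Theta^\ell$, which give $\|\Theta^\ell\|_{L^2(Y)} + \|\nabla \Theta^\ell\|_{L^2(Y)} \le C$, together with the global regularity $\nabla P_0 \in C^{1, 1/2}(\Omega)$ from \eqref{P-reg}.

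For the gradient bound, the product rule splits $\nabla \Phi_\e^{(1)}$ into three types: those where $\eta_\e^\ell$ is differentiated (supported in an $O(\e)$-strip around $\partial \Omega^\ell$), those where $\Theta^\ell(x/\e)$ is differentiated (producing a factor $\e^{-1}$ that cancels the $\e$ outside), and those where a second derivative of $S_\e(f - \nabla P_0)$ appears. The first two are bounded in $L^2$ by $C\|f\|_{C^{1,1/2}(\Omega)}$ using $\|\nabla \Theta^\ell(x/\e)\|_{L^2} \le C$ and the trace-neighborhood estimate $\int_{\{\mathrm{dist}(x, \partial \Omega^\ell) < c\e\}} |\Theta^\ell(x/\e)|^2\, dx \le C\e$ (same spirit as Lemma \ref{lemma-cell}). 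The third uses the smoothing estimate $\|\nabla^2 S_\e g\|_\infty \le C \e^{-1/2}\|g\|_{C^{1, 1/2}}$, which follows from the Hölder regularity of $\nabla g$ via the standard cancellation $\int \nabla \varphi_\e = 0$. Combined with the prefactor $\e$, these yield $\e \|\nabla \Phi_\e^{(1)}\|_{L^2(\Omega_\e)} \le C\e^{1/2}\|f\|_{C^{1,1/2}(\Omega)}$.

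For the divergence, I would use $\mathrm{div}(V_\e) = W^\ell_{ij}(x/\e) \partial_i(f_j - \partial_j P_0)$ from \eqref{div-1} together with the identity $\partial_k \Theta^\ell_{kij}(x/\e) = \e^{-1}(-W^\ell_{ij}(x/\e) + c_\ell K^\ell_{ij})$ on the fluid part to find, with $g_j = f_j - \partial_j P_0$, that on the bulk region $\{\eta_\e^\ell = 1\}$,
\[
\mathrm{div}(V_\e + \Phi_\e^{(1)}) = W^\ell_{ij}(x/\e)(I - S_\e)\partial_i g_j + c_\ell K^\ell_{ij} S_\e(\partial_i g_j) + \e \Theta^\ell_{kij}(x/\e) \partial_k \partial_i S_\e g_j.
\]
The middle term vanishes identically because $K^\ell_{ij} \partial_i g_j = \mathrm{div}(K^\ell (f - \nabla P_0)) = 0$ classically in $\Omega^\ell$ by \eqref{P-0} and \eqref{P-reg}, while the $\e/8$-support of $\varphi_\e$ stays inside $\Omega^\ell$ wherever $\eta_\e^\ell \ne 0$; the first is $O(\e^{1/2})$ pointwise by the Hölder regularity of $\nabla g$ and then $O(\e^{1/2})$ in $L^2$; the third is $O(\e^{1/2})$ in $L^2$ by the smoothing estimate above. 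In the transition strip where $\nabla \eta_\e^\ell \ne 0$, one also picks up $(1-\eta_\e^\ell) W^\ell(x/\e) \nabla g$ and $\e(\nabla \eta_\e^\ell)\Theta^\ell(x/\e)\nabla S_\e g$; both live in an $O(\e)$-neighborhood of $\partial \Omega^\ell$ and are bounded in $L^2$ by $C\e^{1/2}\|f\|_{C^{1,1/2}(\Omega)}$ via the same trace-neighborhood estimate.

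The main technical point is the cancellation producing the vanishing of the $K^\ell$ contribution; it relies crucially on choosing the cutoff width $(2d\e)$ to exceed the convolution radius $(\e/8)$ so that $S_\e(\mathrm{div}(K(f - \nabla P_0)))$ evaluates to zero wherever $\eta_\e^\ell \ne 0$, with no cross-interface contamination. Once this compatibility is arranged, everything else reduces to the $L^2$-trace bounds for periodic functions in an $O(\e)$-neighborhood of a Lipschitz surface and to standard mollifier estimates.
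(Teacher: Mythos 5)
Your proposal is correct and follows the same route as the paper's proof: $H_0^1$ membership from the vanishing of $\Theta^\ell$ on $Y_s^\ell$ and of $\eta_\e^\ell$ near $\partial\Omega^\ell$; the gradient bound by splitting the product rule into the cutoff-derivative, chain-rule, and $\nabla^2 S_\e$ pieces and invoking the periodic $L^2$ bounds plus mollifier smoothing; and the divergence bound by using $\mathrm{div}_y\Theta^\ell = -W^\ell + |Y_f^\ell|^{-1}K^\ell$, the pointwise identity $K^\ell_{ij}\partial_i(f_j-\partial_j P_0)=0$ in $\Omega^\ell$, and the H\"older mollifier estimates $\|(I-S_\e)\psi\|_\infty \le C\e^{1/2}\|\psi\|_{C^{1/2}}$ and $\|\nabla S_\e\psi\|_\infty \le C\e^{-1/2}\|\psi\|_{C^{1/2}}$. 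Your explicit observation that the $2d\e$ cutoff margin exceeds the $\e/8$ mollification radius, so that $S_\e$ never sees data from across the interface where $\eta_\e^\ell\neq 0$, is exactly the compatibility the paper uses implicitly.
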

 
 \begin{proof}
 Clearly, $\Phi_\e^{(1)} \in H_0^1(\Omega_\e; \R^d)$.
 Note that 
 $$
 \aligned
  \|\nabla \Phi_\e^{(1)}\|_{L^2(\Omega_\e^\ell)}
 &  \le  C \e^{1/2} \|\nabla S_\e(f-\nabla P_0) \|_{L^\infty (N_{3d\e}\setminus N_{2d\e})}
 + C  \|\nabla S_\e (f-\nabla P_0)\|_{L^\infty(\Omega^\ell \setminus N_{2d\e})}\\
&\qquad\qquad
 + C \e \|\nabla^2 S_\e (f-\nabla P_0)\|_{L^\infty(\Omega^\ell \setminus N_{2d\e})},
\endaligned
 $$
 where $N_r =\{ x\in \Omega^\ell: \text{\rm dist}(x, \partial \Omega^\ell) < r \}$.
 This, together with the observation that $\nabla S_\e (\psi)=S_\e( \nabla \psi)$ and 
 $$
|S_\e (\psi)(x)| + \e  |\nabla S_\e (\psi)(x)| \le C  \fint_{B(x, \e/8)} |\psi|,
 $$
 yields
 $$
 \aligned
 \e \|\nabla \Phi_\e^{(1)} \|_{L^2(\Omega_\e^\ell)}
  & \le C \e \|\nabla (f-\nabla P_0) \|_{L^\infty(\Omega^\ell)}\\
 & \le 
  C \e \| f \|_{C^{1, 1/2}(\Omega)}.
 \endaligned
 $$
 
Next,  note that in $\Omega^\ell_\e$,
$$
\aligned 
\text{\rm div} (\Phi_\e^{(1)})
 & =\e (\nabla \eta_\e^\ell) \Theta^\ell (x/\e) \nabla S_\e (f-\nabla P_0)
- \eta_\e^\ell W^\ell (x/\e) \nabla S_\e (f-\nabla P_0)\\
& \qquad\qquad
+\e \eta_\e^\ell \Theta^\ell (x/\e) \nabla^2 S_\e (f-\nabla P_0),
\endaligned
$$
where we have used the fact that  $\text{\rm div} (K^\ell (f-\nabla P_0)) =0$ in $\Omega_\e^\ell$.
It follows that
$$
\aligned
 &\| \text{\rm div} (\Phi_\e^{(1)}) + W^\ell (x/\e) \nabla (f-\nabla P_0) \|_{L^2(\Omega_\e^\ell)}\\
& \le C \e^{1/2} \|\nabla (f-\nabla P_0)\|_{L^\infty(\Omega^\ell)}
+ \| W^\ell (x/\e) \left\{ \nabla (f-\nabla P_0) -\eta_\e^\ell \nabla S_\e (f-\nabla P_0)\right\} \|_{L^2(\Omega_\e^\ell)}\\
&\qquad\qquad
+ C \e  \|\nabla^2 S_\e (f-\nabla P_0) \|_{L^\infty(\Omega_\e^\ell \setminus N_{2d\e}) }\\
& \le C \e^{1/2} \|\nabla (f-\nabla P_0)\|_{L^\infty(\Omega^\ell)}
+ \| \nabla (f-\nabla P_0) -\nabla S_\e (f-\nabla P_0) \|_{L^\infty(\Omega^\ell\setminus N_{2d\e})}\\
&\qquad\qquad
+ C \e  \|\nabla^2 S_\e (f-\nabla P_0) \|_{L^\infty(\Omega^\ell \setminus N_{2d\e}) }\\
&\le 
 C \e^{1/2}  \|\nabla (f-\nabla P_0) \|_{C^{1/2} (\Omega^\ell )}\\
& \le C \e^{1/2} \| f\|_{C^{1, 1/2}(\Omega)},
\endaligned
$$
where we have used \eqref{P-reg} for the last inequality.
In the third inequality above we also used the observation that
$$
\nabla S_\e (\psi) (x)=
-\int_{\R^d} \left (\psi(x-y) -\psi (x)\right) \nabla_y (\varphi_\e (y) ) \, dy, 
$$
which gives
$$
|\nabla S_\e (\psi) (x)|
\le C  \e^{\alpha -1}\| \psi \|_{C^{0, \alpha} (B(x, \e))}.
$$
This completes the proof of \eqref{PP-1}.
\end{proof}

%%%%%%%%%%%%%%%%%%%%%%

%%%%%%%%

\section{Boundary correctors}\label{section-5}

To construct the boundary corrector $\Phi_\e^{(2)}$, we 
consider the Dirichlet problem,
\begin{equation}\label{D-5}
\left\{
\aligned
-\e^2 \Delta u_\e +\nabla p_\e & =0 & \quad & \text{ in } \Omega_\e\\
\text{\rm div}(u_\e) & =\gamma & \quad & \text{ in } \Omega_\e,\\
u_\e & =0 & \quad & \text{ on } \Gamma_\e,\\
 u_\e & =h & \quad & \text{ on } \partial \Omega,
 \endaligned
 \right.
 \end{equation}
 where $\Omega_\e$ is given by \eqref{domain-e} and 
 \begin{equation}\label{com-5}
 \gamma =\frac{1}{|\Omega_\e|} \int_{\partial \Omega} h \cdot n \, d\sigma.
 \end{equation}
Let $\Phi_\e^{(2)} \in H^1(\Omega_\e; \R^d)$ be the solution of \eqref{D-5}  with boundary value,
\begin{equation} \label{5.7-1}
h= -V_\e \quad \text{ on } \partial \Omega,
\end{equation}
where $V_\e$ is given by \eqref{V01}. Thus, if $\partial \Omega \cap \partial \Omega^\ell\neq \emptyset$ for 
some $1\le \ell \le L$,
\begin{equation}\label{5.7-2}
\Phi_\e^{(2)}
=- W^\ell (x/\e) (f-\nabla P_0) \quad \text{ on } \partial\Omega \cap \partial \Omega^\ell.
\end{equation}

\begin{thm}\label{thm-Phi-2}
Let $\Phi_\e^{(2)}$ be defined as above. Then $\Phi_\e^{(2)}$ satisfies \eqref{PP-2}.
\end{thm}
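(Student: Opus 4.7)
My plan is to verify \eqref{PP-2} by applying Theorem \ref{e-thm} to the Dirichlet problem \eqref{D-5} that defines $\Phi_\e^{(2)}$, with a carefully designed extension $H$ of the boundary data. The first two assertions of \eqref{PP-2}---that $\Phi_\e^{(2)}\in H^1(\Omega_\e;\R^d)$ vanishes on $\Gamma_\e$ and equals $-V_\e$ on $\partial\Omega$---are immediate from the definition. For the divergence bound, observe that $\text{\rm div}(\Phi_\e^{(2)})\equiv \gamma$, so $\|\text{\rm div}(\Phi_\e^{(2)})\|_{L^2(\Omega_\e)} = |\gamma||\Omega_\e|^{1/2}$, and it suffices to show $|\gamma|\le C\sqrt{\e}\,\|f\|_{C^{1,1/2}(\Omega)}$ (in fact, $|\gamma|=O(\e)$).

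The smallness of $\gamma$ exploits the structure of $V_\e$ on $\partial\Omega$. By the interface condition \eqref{condition-1}, $\Sigma$ is disjoint from $\partial\Omega$, so each connected component of $\partial\Omega$ lies in a single $\partial\Omega^\ell$. On such a component,
\begin{equation*}
n\cdot V_\e = n_i K^\ell_{ij}(f_j-\partial_j P_0) + n_i\bigl(W^\ell_{ij}(x/\e)-K^\ell_{ij}\bigr)(f_j-\partial_j P_0),
\end{equation*}
and the first term integrates to zero by \eqref{P-0}. For the second term, Lemma \ref{skew-lemma} combined with the skew-symmetry $\phi^\ell_{kij}=-\phi^\ell_{ikj}$ gives
\begin{equation*}
n_i\bigl(W^\ell_{ij}(x/\e)-K^\ell_{ij}\bigr) = \tfrac{\e}{2}\bigl(n_i\partial_k - n_k\partial_i\bigr)\phi^\ell_{kij}(x/\e),
\end{equation*}
which is $\e$ times a tangential derivative on $\partial\Omega$. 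Tangential integration by parts on each closed boundary component, together with \eqref{skew-2} and the $C^{1,1/2}$ bound on $\nabla P_0$ from \eqref{P-reg}, yields $|\gamma|\le C\e\,\|f\|_{C^{1,1/2}(\Omega)}$, which is more than enough.

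The heart of the proof is the construction of $H\in H^1(\Omega;\R^d)$ with $H=h$ on $\partial\Omega$ and
\begin{equation*}
\|H\|_{L^2(\Omega)} + \|\text{\rm div}(H)\|_{L^2(\Omega)} + \e\|\nabla H\|_{L^2(\Omega)} \le C\sqrt{\e}\,\|f\|_{C^{1,1/2}(\Omega)}.
\end{equation*}
I would take $H$ supported in a boundary layer $\{x\in\Omega:\text{\rm dist}(x,\partial\Omega)<c\e\}$ of the form $H=-\eta_\e V_\e + \e\,G_\e$, where $\eta_\e$ is a cutoff equal to one on $\partial\Omega$ inside the layer, and $G_\e$ is a correction built from the skew-symmetric $\phi^\ell$'s. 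Since the layer has volume $O(\e)$ and $W^\ell(x/\e)$, $\phi^\ell(x/\e)$ are bounded in $L^2$ on $O(\e)$-thick neighborhoods of $\partial\Omega$ (by Lemma \ref{lemma-cell} and \eqref{skew-2}), we immediately obtain $\|H\|_{L^2(\Omega)}=O(\sqrt\e)$ and $\e\|\nabla H\|_{L^2(\Omega)}=O(\sqrt\e)$. The delicate point is that $\text{\rm div}(-\eta_\e V_\e)$ contains a dangerous contribution $-V_\e\cdot\nabla \eta_\e$ of size $O(\e^{-1/2})$ in $L^2$; the role of $G_\e$ is to cancel this contribution to leading order by invoking the identity $W^\ell_{ij}(x/\e)-K^\ell_{ij}=\e\,\partial_k\phi^\ell_{kij}(x/\e)$ together with the skew-symmetry, which ensures that the second-derivative terms on $\eta_\e$ that would otherwise appear drop out.

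Once such an $H$ is in place, Theorem \ref{e-thm} applied with $f=F=0$ and $g=\gamma$ yields $\e\|\nabla \Phi_\e^{(2)}\|_{L^2(\Omega_\e)}\le C\sqrt{\e}\,\|f\|_{C^{1,1/2}(\Omega)}$, which together with the divergence bound establishes \eqref{PP-2}. The main obstacle is making the cancellation in $\text{\rm div}(-\eta_\e V_\e + \e G_\e)$ rigorous: one must choose $G_\e$ so that it lies in $H^1(\Omega;\R^d)$ with norm bounds uniform in $\e$, and handle the boundary layer contributions near the cells $\e(z+Y_s^\ell)$ that straddle $\{\text{\rm dist}(x,\partial\Omega)\sim\e\}$, using the surface estimates from Lemma \ref{lemma-cell} and Lemma \ref{skew-lemma}.
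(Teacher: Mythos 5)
Your proposal is correct in outline, and it takes a genuinely different route from the paper. The paper does not construct an explicit extension $H$; instead it invokes Theorem \ref{thm-N} (after rewriting $n\cdot h$ in the tangential--derivative form \eqref{5.7-6}), which in turn rests on Theorem \ref{thm-t} for the tangential component of $h$ (a Rellich--estimate argument in Lipschitz domains drawn from \cite{FKV}) and on a duality argument for the normal component that uses $W^{2,2}$ Stokes regularity, Green--function pointwise bounds, and the nontangential--maximal--function estimate. Your plan replaces all of this machinery by a direct boundary--layer construction $H=-\eta_\e V_\e+\e G_\e$ fed into the elementary energy estimate, Theorem \ref{e-thm}. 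That is a simpler and more self-contained argument for the specific datum $h=-V_\e$; the trade-off is that Theorem \ref{thm-N} is a reusable tool (the paper applies it again in Section \ref{section-6} to the interface corrector), whereas your construction is tailored to this one boundary value.

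Two clarifications are worth making to turn the sketch into a proof. First, the ``dangerous'' term $-\nabla\eta_\e\cdot V_\e$ splits, after writing $W^\ell_{kj}=K^\ell_{kj}+\e\,\partial_m\bigl(\phi^\ell_{mkj}(x/\e)\bigr)$, into a constant--coefficient part $\partial_k\eta_\e\,K^\ell_{kj}(f_j-\partial_jP_0)$ and a $\phi^\ell$--part. Only the latter is killed by your $G_\e$--corrector and the skew--symmetry (which makes $\partial_m\partial_k\eta_\e\,\phi^\ell_{mkj}\equiv 0$). The constant--coefficient part is not affected by $G_\e$; it is controlled separately by choosing $\eta_\e$ to depend only on $\text{dist}(x,\partial\Omega)$, so that $\partial_k\eta_\e\propto\nu_k$, and then invoking the Neumann condition $n\cdot K^\ell(f-\nabla P_0)=0$ on $\partial\Omega$ together with the Lipschitz regularity of $f-\nabla P_0$ (from \eqref{P-reg}), which gives $\nu_kK^\ell_{kj}(f_j-\partial_jP_0)=O(\text{dist}(x,\partial\Omega))=O(\e)$ in the layer and hence $\partial_k\eta_\e\,K^\ell_{kj}(f_j-\partial_jP_0)=O(1)$ pointwise. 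Your sketch implicitly treats all of $V_\e\cdot\nabla\eta_\e$ as corrected by $G_\e$; making this split explicit is essential. Second, the explicit corrector is $(G_\e)_m=\partial_k\eta_\e\,\phi^\ell_{mkj}(x/\e)\,(f_j-\partial_jP_0)$, which vanishes near $\partial\Omega$ (so $H=h$ there) and near the inner edge of the layer (so $H$ extends by zero), and whose contributions to $\|H\|_{L^2}$, $\|\text{\rm div}(H)\|_{L^2}$, $\e\|\nabla H\|_{L^2}$ are all $O(\sqrt\e)$ by periodicity and the bound $\|\phi^\ell(\cdot/\e)\|_{L^2(N_\e)}+\|\nabla_y\phi^\ell(\cdot/\e)\|_{L^2(N_\e)}\le C\sqrt\e$. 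With these two points filled in, the approach goes through.
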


To show Theorem \ref{thm-Phi-2}, we first prove some general results, which will be  used also in the construction of 
correctors for the interface.

\begin{thm}\label{thm-t}
Let $\Omega$ be a bounded Lipschitz domain in $\R^d$, $d\ge 2$.
Assume that $\Omega^\ell$ and $Y_s^\ell$  with $1\le \ell \le L$
are subdomains of $\Omega$ and $Y$, respectively,
with Lipschitz boundaries.
Let $(u_\e, p_\e)$ be a weak solution in $H^1(\Omega_\e; \R^d) \times L_0^2(\Omega_\e)$  of \eqref{D-5}, where
$h\in H^1(\partial\Omega; \R^d)$ and 
\begin{equation} \label{t-condition}
u\cdot n =0 \quad \text{ on } \partial\Omega.
\end{equation}
Then
\begin{equation}\label{5.1-0}
\e \| \nabla u_\e\|_{L^2(\Omega_\e)}
+ \| u_\e\|_{L^2(\Omega_\e)}
+ \| p_\e \|_{L^2(\Omega_\e)}
\le C \sqrt{\e} 
\left\{
\| h \|_{L^2(\partial\Omega)}
+\e \|\nabla_{\tan} h \|_{L^2(\partial \Omega)} \right\},
\end{equation}
where $\nabla_{\tan} h$ denotes the tangential gradient of $h$ on $\partial\Omega$.
\end{thm}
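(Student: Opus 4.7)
The plan is to invoke Theorem \ref{e-thm} with $f=0$, $F=0$, $g=0$. Since $h\cdot n=0$ on $\partial\Omega$, the compatibility constant in \eqref{com-5} vanishes, so $\gamma=0$ and the divergence source in \eqref{D-5} disappears; Theorem \ref{e-thm} then gives
\[
\e\|\nabla u_\e\|_{L^2(\Omega_\e)} + \|u_\e\|_{L^2(\Omega_\e)} + \|p_\e\|_{L^2(\Omega_\e)}
\le C\bigl(\|H\|_{L^2(\Omega)} + \|\text{\rm div}(H)\|_{L^2(\Omega)} + \e\|\nabla H\|_{L^2(\Omega)}\bigr)
\]
for any $H\in H^1(\Omega;\R^d)$ with $H=h$ on $\partial\Omega$. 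So the task reduces to producing such an $H$ obeying
\[
\|H\|_{L^2(\Omega)} + \|\text{\rm div}(H)\|_{L^2(\Omega)} + \e\|\nabla H\|_{L^2(\Omega)}\le C\sqrt{\e}\bigl(\|h\|_{L^2(\partial\Omega)} + \e\|\nabla_{\tan}h\|_{L^2(\partial\Omega)}\bigr).
\]

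I will build $H$ as a divergence-free vector field supported in an $\e$-tubular neighborhood of $\partial\Omega$; the tangentiality $h\cdot n=0$ is exactly what makes such a construction possible, via a skew-symmetric tensor. On $\partial\Omega$ put
\[
\phi_{ij}(x')=h_i(x')n_j(x')-h_j(x')n_i(x'),
\]
which is skew-symmetric in $(i,j)$ and satisfies $\phi_{ij}(x')n_j(x')=h_i(x')$ because $|n|=1$ and $h\cdot n=0$. Extend $\phi_{ij}$ constantly along inward normals into the tubular layer $\{\rho<\e\}$, where $\rho(x)=\text{\rm dist}(x,\partial\Omega)$; fix a cutoff $\eta\in C_c^\infty([0,1))$ with $\eta(0)=1$; and set
\[
\Xi_{ij}(x)=\rho(x)\,\eta(\rho(x)/\e)\,\phi_{ij}(x),\qquad H_i(x)=\partial_j\Xi_{ij}(x),
\]
extended by zero outside the layer. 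Skew-symmetry of $\Xi_{ij}$ in $(i,j)$ forces $\text{\rm div}(H)=\partial_i\partial_j\Xi_{ij}\equiv 0$ in $\Omega$. In a local normal coordinate, $\Xi_{ij}|_{\rho=0}=0$ annihilates all tangential derivatives at the boundary, while $\partial_\rho\Xi_{ij}|_{\rho=0}=\phi_{ij}$, giving $H_i|_{\partial\Omega}=n_j\phi_{ij}=h_i$ as required.

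A direct pointwise computation inside the layer yields, writing $\pi(x)$ for the nearest-point projection onto $\partial\Omega$,
\[
|H(x)|\le C\bigl(|h(\pi(x))|+\e\,|\nabla_{\tan}h(\pi(x))|\bigr),\qquad |\nabla H(x)|\le C\bigl(\e^{-1}|h(\pi(x))|+|\nabla_{\tan}h(\pi(x))|\bigr),
\]
the factor $\e^{-1}$ arising from one derivative falling on $\eta(\rho/\e)$. Integrating over $\{\rho<\e\}$ via the coarea-type bound $\|F\circ\pi\|_{L^2(\{\rho<\e\})}^2\le C\e\|F\|_{L^2(\partial\Omega)}^2$ produces $\|H\|_{L^2}+\e\|\nabla H\|_{L^2}\le C\sqrt{\e}\|h\|_{L^2(\partial\Omega)}+C\e^{3/2}\|\nabla_{\tan}h\|_{L^2(\partial\Omega)}$, which together with $\text{\rm div}(H)\equiv 0$ implies \eqref{5.1-0}. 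The main obstacle is technical: when $\partial\Omega$ is merely Lipschitz, $n$ is only $L^\infty$ and $\rho$ only Lipschitz, so the construction and the bound on $\nabla H$ must be carried out chart by chart, flattening $\partial\Omega$ locally to a half-space and gluing with a partition of unity so that the skew-symmetric structure in $(i,j)$, the layer width $\e$, and the size bounds all survive.
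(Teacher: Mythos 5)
The reduction at the start is sound: since $h\cdot n=0$ forces $\gamma=0$, Theorem \ref{e-thm} with $f=F=g=0$ does reduce the problem to constructing an extension $H$ of $h$ with $\|H\|_{L^2(\Omega)}+\|\operatorname{div}(H)\|_{L^2(\Omega)}+\e\|\nabla H\|_{L^2(\Omega)}\le C\sqrt{\e}\,(\|h\|_{L^2(\partial\Omega)}+\e\|\nabla_{\tan}h\|_{L^2(\partial\Omega)})$. The idea of exploiting $h\cdot n=0$ via a skew-symmetric potential $\Xi_{ij}$ to make $\operatorname{div}(H)$ vanish identically is also a natural move and closely mirrors how the paper uses the skew-symmetric correctors $\phi^\ell_{kij}$ elsewhere (e.g.\ Lemma \ref{skew-lemma}, \eqref{4.5-11}, \eqref{6-1-5}).

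However, the claimed pointwise bound on $\nabla H$ is wrong, and this is a genuine gap, not a routine technicality. Writing $\zeta(\rho)=\rho\,\eta(\rho/\e)$ and $\Xi_{ij}=\zeta(\rho)\,\phi_{ij}$ with $\phi_{ij}$ the constant-normal extension of the boundary tensor, you have $H_i=\partial_j\Xi_{ij}$ and therefore
\[
\partial_k H_i=\zeta''(\partial_k\rho)(\partial_j\rho)\phi_{ij}+\zeta'(\partial_k\partial_j\rho)\phi_{ij}+\zeta'(\partial_j\rho)\partial_k\phi_{ij}+\zeta'(\partial_k\rho)\partial_j\phi_{ij}+\zeta\,\partial_k\partial_j\phi_{ij}.
\]
The last term does not disappear after flattening: in half-space coordinates it is $\zeta(x_d)\sum_{j,k<d}\partial_k\partial_j\phi_{ij}(x')$, of size $O(\rho\,|\nabla^2_{\tan}h|)$. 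Consequently the correct bound is
\[
|\nabla H(x)|\le C\big(\e^{-1}|h(\pi(x))|+|\nabla_{\tan}h(\pi(x))|+\rho(x)\,|\nabla^2_{\tan}h(\pi(x))|\big),
\]
and after integration over the layer one picks up an $\e^{5/2}\|\nabla^2_{\tan}h\|_{L^2(\partial\Omega)}$ term in $\e\|\nabla H\|_{L^2(\Omega)}$. This term is not controlled by the right-hand side of \eqref{5.1-0} and in fact is not even finite under the hypothesis $h\in H^1(\partial\Omega;\R^d)$. So the construction as written proves the theorem only for $h\in H^2(\partial\Omega)$, which is a strictly weaker result. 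A repair is possible in principle --- for instance by tangentially mollifying $\phi_{ij}$ at a scale comparable to $\rho(x)$ as one moves inward, so that $|\nabla^2_{\tan}\phi|\lesssim\rho^{-1}|\nabla_{\tan}h|$ on the slab at height $\rho$ while the trace on $\partial\Omega$ is unchanged --- but your proposal does not do this, and the final two displayed bounds for $H$ are simply false for the extension you built. Separately, I would also note that the sign bookkeeping gives $H_i|_{\partial\Omega}=(\partial_j\rho)\phi_{ij}=-n_j\phi_{ij}=-h_i$ with the standard convention $\nabla\rho=-n$, so a sign flip is needed; that part is cosmetic. Finally, for context: the paper's own proof of this theorem does not go through an explicit extension at all. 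It cites Theorem 4.1 of \cite{Shen-Darcy-1} and is based on the Rellich estimates of \cite{FKV} applied to $(u_\e,p_\e)$ in the $c\e$-collar of $\partial\Omega$, where the Stokes equations hold without holes; the role of $h\cdot n=0$ there is to eliminate the pressure contribution from the conormal derivative, not to build a divergence-free lift.
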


\begin{proof}
This theorem was proved in \cite[Theorem 4.1]{Shen-Darcy-1} for the case $L=1$.
The proof only uses the energy estimate \eqref{3.2-0} and the fact that 
$$
-\e^2 \Delta u_\e +\nabla p_\e = 0 \quad \text{ and } \quad \text{\rm div}(u_\e)=0
$$
in the set $\{ x\in \Omega:\ \text{\rm dist} (x, \partial\Omega)< c\, \e \}$.
As a result, the same proof works equally well for the case $L\ge 2$.
We mention that the argument relies on  the Rellich estimates in \cite{FKV} for the Stokes equations in Lipschitz domains. 
The  condition \eqref{t-condition} allows us to drop the pressure $p_\e$ term in the conormal
derivative  $\partial u_\e/{\partial \nu} $ for $u_\e$ on $\partial\Omega$. 
We omit the details.
\end{proof}

In the next theorem we consider the case where 
\begin{equation}\label{N-condition}
h\cdot n =\e  \ (\nabla_{\tan}  \phi_\e ) \cdot g )   \quad \text{ on } \partial \Omega.
\end{equation}
By using integration by parts on $\partial\Omega$,  we see that
\begin{equation}\label{gamma-e}
\aligned
|\gamma|  &\le C \Big|\int_{\partial\Omega} h\cdot n\,  d \sigma \Big|\\
 &\le C\e  \| \phi_\e \nabla_{\tan} g \|_{L^2(\partial\Omega)}.
\endaligned
\end{equation}

\begin{thm}\label{thm-N}
Let $\Omega$ be a bounded $C^{2, \alpha}$ domain in $\R^d$, $d\ge 2$.
Let $(u_\e, p_\e)$ be a weak solution in $H^1(\Omega_\e; \R^d)\times L_0^2(\Omega)$ of \eqref{D-5},
 where $h\in H^1(\partial\Omega)$ and $h\cdot n$  is given \eqref{N-condition}.
Then
\begin{equation}\label{5.2-0}
\aligned
& \e \|\nabla u_\e\|_{L^2(\Omega_\e)}
+\| u_\e \|_{L^2(\Omega_\e)}
+ \|p_\e\|_{L^2(\Omega_\e)}\\
& \qquad
\le C \sqrt{\e}
\left\{ \| h\|_{L^2(\partial \Omega)} +  \e \| \nabla_{\tan} h \|_{L^2(\partial \Omega)}
+\| \phi_\e g \|_{L^2(\partial\Omega)}
+ \e^{1/2} \| \phi_\e \nabla_{\tan}  g \|_{L^2(\partial\Omega)}
\right\}.
\endaligned
\end{equation}
\end{thm}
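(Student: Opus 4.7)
The plan is to split the boundary data into tangential and normal parts, apply Theorem \ref{thm-t} to the former, and exploit the tangential-derivative structure of $h\cdot n$ through a narrow-support extension for the latter. I would write $h = h_T + (h\cdot n)n$ with $h_T := h - (h\cdot n) n$ satisfying $h_T\cdot n = 0$ on $\partial\Omega$, and decompose $(u_\e, p_\e) = (u_\e', p_\e') + (u_\e'', p_\e'')$, where $(u_\e', p_\e')$ solves \eqref{D-5} with boundary value $h_T$ and divergence constant $0$, while $(u_\e'', p_\e'')$ solves \eqref{D-5} with boundary value $(h\cdot n)n$ and the original divergence constant $\gamma$ from \eqref{com-5}. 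Since $\partial\Omega\in C^{2,\alpha}$, the outward unit normal $n$ is $C^{1,\alpha}$, so
$$
\|h_T\|_{L^2(\partial\Omega)}+\e\|\nabla_{\tan}h_T\|_{L^2(\partial\Omega)}\le C\left(\|h\|_{L^2(\partial\Omega)}+\e\|\nabla_{\tan}h\|_{L^2(\partial\Omega)}\right),
$$
and Theorem \ref{thm-t} applied to $(u_\e', p_\e')$ recovers the first two terms on the right side of \eqref{5.2-0}.

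For $(u_\e'', p_\e'')$, I would construct an extension $H\in H^1(\Omega; \R^d)$ with $H|_{\partial\Omega} = (h\cdot n) n$ and supported in the tubular neighborhood $U_\e = \{x\in\Omega : \text{\rm dist}(x, \partial\Omega) < c\e\}$, and then invoke the general energy estimate \eqref{3.1-0} of Theorem \ref{e-thm} with $f = 0$, $F = 0$, and divergence source $\gamma$, whose modulus is controlled by \eqref{gamma-e}. A first attempt of the form $H_0 = \chi(\rho/\e)\,\widetilde{(h\cdot n)n}$, with $\rho = \text{\rm dist}(\cdot, \partial\Omega)$, $\chi\in C_c^\infty([0,1))$, $\chi(0) = 1$, and the tilde denoting extension by constancy along normal rays, yields the correct $\|H_0\|_{L^2(\Omega)}$ and $\e\|\nabla H_0\|_{L^2(\Omega)}$ bounds from the $O(\e)$-thick support. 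The difficulty lies in $\text{\rm div}(H_0)$: its leading contribution $\chi'(\rho/\e)\e^{-1}(h\cdot n)$ has $L^2(\Omega)$-norm bounded only by $C\sqrt{\e}\,\|\nabla_{\tan}\phi_\e\cdot g\|_{L^2(\partial\Omega)}$ after the substitution $h\cdot n = \e \nabla_{\tan}\phi_\e \cdot g$, whereas \eqref{5.2-0} requires the derivative to fall on $g$ rather than on $\phi_\e$.

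The main obstacle is therefore this divergence bound. I would resolve it via the tangential integration-by-parts identity
$$
\e\nabla_{\tan}\phi_\e\cdot g = \e\,\text{\rm div}_{\tan}(\phi_\e g) - \e\phi_\e\,\text{\rm div}_{\tan} g
$$
on each level surface $\{\rho = t\}$ in $U_\e$. Concretely, set $H = H_0 + H_1$, where $H_1$ is a tangential vector field constructed (via a tangential Bogovskii-type primitive of $\phi_\e g$ on each level surface) so that $\text{\rm div}(H_1)$ precisely cancels the $\nabla_{\tan}\phi_\e$ contribution to $\text{\rm div}(H_0)$, leaving behind only terms of the form $\chi'(\rho/\e)\e^{-1}\phi_\e g$ and $\chi(\rho/\e)\phi_\e \nabla_{\tan} g$; arranging $H_1\cdot n = 0$ on $\partial\Omega$ preserves the boundary data of $H$. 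The thin-support bookkeeping then yields
$$
\|\text{\rm div}(H)\|_{L^2(\Omega)} + \|H\|_{L^2(\Omega)} + \e\|\nabla H\|_{L^2(\Omega)} \le C\sqrt{\e}\left(\|h\|_{L^2(\partial\Omega)} + \e\|\nabla_{\tan}h\|_{L^2(\partial\Omega)} + \|\phi_\e g\|_{L^2(\partial\Omega)} + \e^{1/2}\|\phi_\e\nabla_{\tan} g\|_{L^2(\partial\Omega)}\right).
$$
Combining this with the estimate for $(u_\e', p_\e')$ from Theorem \ref{thm-t} and invoking \eqref{3.1-0} produces \eqref{5.2-0}.
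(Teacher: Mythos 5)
Your opening step (split $h$ into its tangential and normal parts and dispose of the tangential part via Theorem \ref{thm-t}) coincides with the paper's reduction. The two approaches diverge completely at the extension step for $(h\cdot n)n$. The paper does \emph{not} use a narrow-support extension: it takes $H=H_1+\gamma(x-x_0)/d$ with $H_1$ the Stokes extension of $h-\gamma(x-x_0)/d$, controls $\e\|\nabla H_1\|_{L^2(\Omega)}$ by interpolating $\|h\|_{H^{1/2}(\partial\Omega)}\lesssim \e^{-1/2}\|h\|_{L^2}+\e^{1/2}\|\nabla_{\tan}h\|_{L^2}$, controls $\|H_1\|_{L^2(N_\e)}$ by the Fabes--Kenig--Verchota nontangential maximal function estimate, and controls $\|H_1\|_{L^2(\Omega\setminus N_\e)}$ by a \emph{duality argument}: pair against $-\Delta G+\nabla\pi=F$ with $F$ supported away from $\partial\Omega$, integrate by parts so only boundary integrals survive, exploit $h\cdot n=\e(\nabla_{\tan}\phi_\e)\cdot g$ to integrate by parts once more on $\partial\Omega$, and finally invoke $W^{2,2}$ regularity plus the sharp boundary-layer estimate $\|\nabla^2 G\|_{L^2(\partial\Omega)}+\|\nabla\pi\|_{L^2(\partial\Omega)}\le C\e^{-1/2}\|F\|_{L^2}$ (from the Green-function kernel bound). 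This is where the factor $\e^{1/2}$ in front of $\|\phi_\e\nabla_{\tan}g\|_{L^2(\partial\Omega)}$ actually comes from; none of it appears in your argument.

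Your narrow-support construction has a concrete gap. To cancel the leading divergence term $\chi'(\rho/\e)\e^{-1}(h\cdot n)=\chi'(\rho/\e)\,\nabla_{\tan}\phi_\e\cdot g$ of $H_0$, any tangential corrector must be, modulo tangentially divergence-free pieces, of the form $H_1=\chi'(\rho/\e)\,\widetilde{\phi_\e g_{\tan}}$ (a tangential Bogovskii primitive cannot gain a derivative, so it reduces to the same thing up to irrelevant averages). Then $\nabla_{\tan} H_1$ contains the full tensor $(\nabla_{\tan}\phi_\e)\otimes g$, so $\e\|\nabla H_1\|_{L^2(\Omega)}\gtrsim \e^{3/2}\|(\nabla_{\tan}\phi_\e)\otimes g\|_{L^2(\partial\Omega)}$; this quantity is not bounded by the right-hand side of \eqref{5.2-0}, because only the scalar contraction $\nabla_{\tan}\phi_\e\cdot g=\e^{-1}h\cdot n$ is controlled, not the tensor product. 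In addition, after the cancellation the remainder $\chi'(\rho/\e)\phi_\e\,\text{div}_{\tan}g$ gives $\|\text{div}(H)\|_{L^2(\Omega)}\gtrsim\sqrt{\e}\,\|\phi_\e\nabla_{\tan}g\|_{L^2(\partial\Omega)}$, which is a factor $\e^{-1/2}$ larger than the term $\e\|\phi_\e\nabla_{\tan}g\|_{L^2(\partial\Omega)}$ permitted by the stated conclusion. Finally, note that the admissible lifting in \eqref{3.1-0} must match $h$ on $\partial\Omega$ \emph{exactly}; requiring only $H_1\cdot n=0$ on $\partial\Omega$ does not preserve the boundary data, so you would additionally need $H_1|_{\partial\Omega}=0$ (e.g. choose $\chi$ flat at $0$). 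The first two issues are the serious ones: as written your ``thin-support bookkeeping'' claim does not close, and without the Stokes-extension/Green-function duality step of the paper there is no apparent way to recover the sharp $\e^{1/2}$ weight in the last term of \eqref{5.2-0}.
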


\begin{proof}
A version of this theorem was proved in \cite[Theorem 5.1] {Shen-Darcy-1} for the case $L=1$.
We give the proof for the general case, using a somewhat different argument.

We first note that by writing
$$
h= ( h - (h\cdot n) n) + (h\cdot n) n
$$
and applying Theorem \ref{thm-t} to the solution of \eqref{D-5}  with boundary data $h- (h\cdot n) n$, 
we may reduce the problem to case where  $h=(h\cdot n) n$ on $\partial\Omega$.

Next, by the energy estimate \eqref{3.1-0} and \eqref{gamma-e},
\begin{equation}\label{5.2-1}
\aligned
& \e \|\nabla u_\e\|_{L^2(\Omega_\e)}
+\| u_\e \|_{L^2(\Omega_\e)}
+ \|p_\e\|_{L^2(\Omega_\e)}\\
& \qquad \le C \left\{ \| H \|_{L^2(\Omega)}
+ \|\text{\rm div}(H)\|_{L^2(\Omega)}
+ \e \| \nabla H \|_{L^2(\Omega)}  + \e \|\phi_\e \nabla_{\tan} g \|_{L^2(\partial\Omega)} \right\},
\endaligned
\end{equation}
where $H$ is any function in $H^1(\Omega; \R^d)$ with  $H=h$ on $\partial\Omega$.
We choose $H=H_1+ \gamma (x-x_0) /d $, where $x_0 \in \Omega$ and $H_1$ is  the weak solution of
$$
-\Delta H_1 +\nabla q =0 \quad \text{ and } \quad \text{\rm div}(H_1)=0 \quad \text{ in } \Omega,
$$
with the boundary value $H_1=h- \gamma (x-x_0)/d$ on $\partial\Omega$.
It follows that 
\begin{equation}\label{5.2-1a}
\aligned
 & \e \|\nabla u_\e\|_{L^2(\Omega)}
+\| u_\e \|_{L^2(\partial\Omega)}
+ \|p_\e\|_{L^2(\Omega)}\\
& \qquad
\le C \left\{ \| H _1\|_{L^2(\Omega)}
+ \e \| \nabla H_1 \|_{L^2(\Omega)} + \e \| \phi_\e \nabla_{\tan} g \|_{L^2(\partial\Omega)}
  \right\},
 \endaligned
\end{equation}
where we have used \eqref{gamma-e}.
By the energy estimates for the Stokes equations in $\Omega$, 
$$
\aligned
\|\nabla H_1 \|_{L^2(\Omega)} 
  & \le C \left\{ \| h \|_{H^{1/2}(\partial\Omega)} +|\gamma |\right\}\\
  & \le C\left\{  \| h\|_{L^2(\partial\Omega)}^{1/2} \| h \|^{1/2}_{H^1(\partial\Omega)} + |\gamma| \right\}\\
  & \le C \left\{ \e^{-1/2} \| h\|_{L^2(\partial\Omega)}
  + \e^{1/2} \| \nabla_{\tan} h \|_{L^2(\partial\Omega)} + |\gamma|  \right\}.
  \endaligned
$$
It follows that
\begin{equation}\label{5.2-3}
\e \|\nabla H_1 \|_{L^2(\Omega)}
\le C\sqrt{\e}
\left\{ \| h \|_{L^2(\partial\Omega)} + \e \|\nabla_{\tan} h \|_{L^2(\partial\Omega)}  + \e \| \phi_\e \nabla_{\tan} g \|_{L^2(\partial\Omega)}
 \right\}.
\end{equation}

To bound $\|H_1\|_{L^2(\Omega)}$, we use the following nontangential-maximal-function estimate,
\begin{equation}\label{max-e}
\| (H_1)^* \|_{L^2(\partial\Omega)}
\le C \| H_1 \|_{L^2(\partial\Omega)},
\end{equation}
where the nontangential maximal function $(H_1)^*$ on $\partial\Omega$ is defined by
$$
(H_1)^*(x)=\sup \big\{ |H_1(y)|: \ y\in \Omega \text{ and }   |y-x|< C_0\,  \text{dist}(y, \partial\Omega) \big\}
$$
for $x\in \partial\Omega$.
The estimate \eqref{max-e} was proved in \cite{FKV} for a bounded Lipschitz domain $\Omega$.
Let
$$
N_r =\big\{ x\in \Omega: \ \text{dist}(x, \partial \Omega)<   r  \big\}.
$$
It follows from \eqref{max-e} that
\begin{equation}\label{max-2}
\aligned
\| H_1 \|_{L^2(N_\e)}  & \le C \sqrt{\e} \| (H_1)^*\|_{L^2(\partial\Omega)}\\
& \le C \sqrt{\e} \left\{ \| h \|_{L^2(\partial \Omega)} +\e \| \phi_\e \nabla_{\tan} g \|_{L^2(\partial\Omega)}  \right\}.
\endaligned
\end{equation}

It remains to bound $\|H_1 \|_{L^2(\Omega\setminus N_\e)}$.
To this end, we consider the Dirichlet problem,
$$
\left\{
\aligned
-\Delta G +\nabla \pi  & =F & \quad & \text{ in } \Omega,\\
\text{\rm div} (G) & =0 & \quad & \text{ in } \Omega,\\
G & =0 & \quad & \text{ on } \partial\Omega,
\endaligned
\right.
$$
where $F \in C_0^\infty (\Omega \setminus  N_\e)$ and $\int_{ \Omega} \pi\, dx =0$.
Under the assumption that $\partial\Omega$ is of $C^{2, \alpha}$, we have the $W^{2, 2}$ estimates,

\begin{equation}\label{5.2-5}
\|G \|_{H^2(\Omega)}  +\|  \pi\|_{H^1(\Omega)} \le C \| F \|_{L^2(\Omega)}.
\end{equation}
This implies that
 \begin{equation}\label{5.6-6-1}
 \| \nabla G \|_{L^2(\partial\Omega)} + \| \pi \|_{L^2(\partial \Omega)}
 \le C \| F \|_{L^2(\Omega)}.
 \end{equation}
Moreover, since $F=0$ in $N_\e$, by covering $\partial\Omega$ with balls of radius $c\e$,
one may show that
\begin{equation}\label{5.2-6}
\aligned
\int_{\partial\Omega}
\left( |\nabla^2 G |^2 + |\nabla \pi |^2 \right)\, d\sigma
 & \le C \e^{-1} \|F \|_{L^2(\Omega)}^2.
 \endaligned
 \end{equation}
 To see this, we use the Green function representation for $G$ to obtain 
 \begin{equation}\label{5.5-6a}
 |\nabla^2 G(x)|
 \le C \int_{\Omega\setminus N_\e}  \frac{|F(y)|}{|x-y|^d } \, dy
 \end{equation}
 for $x\in \partial\Omega$.
 See e.g. \cite{G-Z-2019} for estimates of Green functions for the Stokes equations.
 Choose $\alpha, \beta \in (0, 1)$ such that $\alpha+\beta=1$,
 $\alpha >(1/2)$ and $\beta > (1/2)-(1/2d)$.
 It follows by the Cauchy inequality that for $x\in \partial\Omega$,
 $$
 \aligned
 |\nabla^2 G(x)|^2
 & \le C \left(\int_{\Omega\setminus N_\e} \frac{dy}{|x-y|^{2d\alpha}}\right)
\left(\int_{\Omega\setminus N_\e} \frac{|F(y)|^2}{|x-y|^{2d\beta}} \, dy \right)\\
&\le C \e^{d-2d\alpha}
\int_{\Omega\setminus N_\e} \frac{|F(y)|^2}{|x-y|^{2d\beta}} \, dy,
\endaligned
$$ 
where we have used the conditions $\alpha+\beta=1$ and $\alpha>(1/2)$.
Hence,
$$
\aligned
\int_{\partial\Omega}
|\nabla^2 G|^2\, d\sigma
&\le C \e^{d-2d\alpha}
\int_{\Omega\setminus N_\e}
|F(y)|^2\, dy
\sup_{y\in \Omega\setminus N_\e}
\int_{\partial\Omega} \frac{d\sigma (x)}{|x-y|^{2d\beta}}\\
& \le C \e^{-1}\int_{\Omega} |F(y)|^2\, dy,
\endaligned
$$
  where we have used the condition $\beta>(1/2)-(1/2d)$.
  This gives the estimate for $|\nabla^2 G|$ in \eqref{5.2-6}.
  The estimate for $\nabla \pi$ follows from the equation $-\Delta G+\nabla \pi=0$ near $\partial\Omega$.
   
 Finally, using integration by parts, we see that
 $$
 \aligned
  \int_\Omega H_1\cdot F\, dx 
 &= 
 \int_\Omega H_1 \cdot (-\Delta G + \nabla \pi)\, dx \\
 &=-
\int_{\partial \Omega}
H_1 \cdot \Big(\frac{\partial G}{\partial n} -n\pi \Big)\, d\sigma\\
&=-\int_{\partial\Omega} \Big( \e(  (\nabla_{\tan} \phi_\e) \cdot g ) n - \gamma (x-x_0)/d \Big)
 \cdot \Big(\frac{\partial G}{\partial n} -n\pi \Big)\, d\sigma.\\
\endaligned
$$
It follows  by using integration by parts on $\partial\Omega$ that 
$$
\aligned
& \Big|   \int_\Omega H_1 \cdot F\, dx  \Big|\\
&\le C\e  \int_{\partial\Omega}
|\phi_\e |\left( |\nabla g| |\nabla G| 
+|g| |\nabla^2 G|
+ |g| |\nabla G| 
+ |\nabla g| |\pi|
+ |g|  |\nabla\pi| + |g| |\pi | \right) \, d\sigma\\
& \qquad\qquad
+  |\gamma| 
\int_{\partial\Omega} \left( |\nabla G| + |\pi   |\right) \, d\sigma \\
&\le C \e \| \phi_\e g \|_{L^2(\partial\Omega)}
\left\{ \|\nabla^2 G \|_{L^2(\partial \Omega)} + \|\nabla G \|_{L^2(\partial\Omega)}
+ \|\nabla \pi \|_{L^2(\partial\Omega)} + \|\pi \|_{L^2(\partial\Omega)}  \right\}\\
&\qquad\qquad
+ C \e \| \phi_\e \nabla_{\tan} g \|_{L^2(\partial\Omega)} 
\left \{ \| \nabla G\|_{L^2(\partial\Omega)}
+ \| \pi \|_{L^2(\partial\Omega)}\right\},\\
\endaligned
$$
where we have used the Cauchy inequality and \eqref{gamma-e}.
This, together with  \eqref{5.6-6-1} and \eqref{5.2-6}, gives
$$
\Big|   \int_\Omega H_1 \cdot F\, dx  \Big|
\le C \e^{1/2}  \| F \|_{L^2(\Omega)}
\left\{  \| \phi_\e g \|_{L^2(\partial\Omega)}
+ \e^{1/2} \| \phi_\e \nabla_{\tan} g \|_{L^2(\partial\Omega)} \right\}.
$$
By duality we obtain 
\begin{equation}\label{5.6-8}
\| H_1 \|_{L^2(\Omega \setminus  N_\e)}
\le C \e^{1/2}  
\left\{  \| \phi_\e g \|_{L^2(\partial\Omega)}
+ \e^{1/2} \| \phi_\e \nabla_{\tan} g \|_{L^2(\partial\Omega)} \right\}.
\end{equation}
The desired estimate \eqref{5.2-0} 
follows from \eqref{5.2-1}, \eqref{5.2-3}, \eqref{max-2} and \eqref{5.6-8}.
\end{proof}

\begin{proof} [Proof of Theorem \ref{thm-Phi-2}]

Clearly,  by its definition, $\Phi_\e^{(2)} \in H^1(\Omega_\e; \R^d)$, $\Phi_\e^{(2)}=0$ on $\Gamma_\e$, and $\Phi_\e^{(2)}+V_\e =0$ on $\partial\Omega$.
Using the fact that $n \cdot K^\ell (f-\nabla P_0) =0$ on $\partial\Omega \cap \partial \Omega^\ell$, we obtain 
\begin{equation}\label{5.7-6}
\aligned
n \cdot h   
 &=-n \cdot W^\ell (x/\e) (f-\nabla P_0)\\
& =-n \cdot (W^\ell (x/\e) -K^\ell) (f-\nabla P_0)\\
& = -\frac{\e}{2} \left( 
n_i \frac{\partial}{\partial x_k} - n_k\frac{\partial}{\partial x_i}\right)
\left(\phi^\ell_{kij} (x/\e) \right)
\left(  f_j -\frac{\partial P_0}{\partial x_j} \right)
\endaligned
\end{equation}
on $\partial\Omega\cap \partial \Omega^\ell$.
It follows that
$$
\Big|
\int_{\partial\Omega} n \cdot h \, d\sigma \Big|
\le C\e \| \nabla (f-\nabla P_0) \|_{L^\infty(\partial \Omega)}.
$$
Hence, 
$$
\aligned
\| \text{\rm div}(\Phi_\e^{(2))} \|_{L^2(\Omega_\e)}
 & \le C |\gamma|
\le C \e \| \nabla ( f-\nabla P_0)\|_{L^\infty(\partial\Omega)}\\
& \le C \e \| f\|_{C^{1, 1/2} (\Omega)}.
\endaligned
$$
Finally,  in view of \eqref{5.7-6},  we apply  Theorem \ref{thm-N} to obtain 
$$
\aligned
\e \|\nabla \Phi_\e^{(2)} \|_{L^2(\Omega)}
&\le C \e^{1/2}
\left\{
\| f-\nabla P_0\|_{L^\infty(\partial\Omega)}
+ \e^{1/2} \| \nabla (f-\nabla P_0) \|_{L^\infty(\partial\Omega)} \right\}\\
& \le C \e^{1/2} \| f \|_{C^{1, 1/2} (\Omega)}.
\endaligned
$$
\end{proof}

%%%%%%%%%%%%%%%%%%%

\section{Interface correctors}\label{section-6}

In this section we construct a corrector $\Phi_\e^{(3)}$ for the interface $\Sigma$ and thus completes the proof of Theorem \ref{main-thm-2}.
Let $D=\Omega^\ell$ and $D_\e=\Omega^\ell_\e$ for some $1\le \ell \le L$.
Assume that $\partial D$ has no intersection with the boundary of the unbounded connected component of
 $\R^d\setminus \overline{\Omega}$.
Consider the Dirichlet problem, 
\begin{equation}
\label{D-6}
\left\{
\aligned
-\Delta u_\e +\nabla p_\e & =0 & \quad  & \text{ in } D_\e,\\
\text{\rm div} (u_\e) & =\gamma & \quad & \text{ in } D_\e,\\
u_\e  & =0 & \quad & \text{ on }  \Gamma_\e^\ell, \\
u_\e & = h & \quad & \text{ on } \partial  D,
\endaligned
\right.
\end{equation}
where $\Gamma_\e^\ell=\Gamma_\e \cap D$ and 
$$
\gamma =\frac{1}{|D_\e|}\int_{\partial D} h \cdot n \,  d\sigma.
$$
Let $W^+(y)=W ^\ell (y)$.
Fix $1\le j\le d$.
The boundary data $h$ on $\partial D$ in \eqref{D-6} is given as follows.
Let $\partial D= \cup_{k=1}^{k_0} \Sigma^k$, where $\Sigma^k$ are the connected component of $\partial D$.
On each $\Sigma^k$,  either 
\begin{equation}\label{6-1a}
h=0 \quad 
\end{equation}
or
\begin{equation}\label{6-1}
h=W^-_j (x/\e) -W^+_j (x/\e)
-W^-_{i}  (x/\e)  (K^-_{mj } -K^+_{mj})
\frac{ n_i n_m}{\langle nK^-, n\rangle },
\end{equation}
where $W^-(y)$ denotes the 1-periodic matrix defined by \eqref{cor-1} for the subdomain 
on the other side of $\Sigma^k$, and 
$$
K^+ =\int_Y W^+  (y) \, dy, \qquad K^- =\int_Y W^- (y)\, dy.
$$
In particular, if $\Sigma^ k\subset \partial \Omega$, we let $h=0$ on $\Sigma^k$.
Note that the  repeated indices $i, m$  in \eqref{6-1} are summed from $1$ to $d$.

\begin{lemma}\label{lemma-6-0}
Let $D$ be a bounded $C^{2, \alpha}$ domain in $\R^d$, $d\ge 2$.
Let $(u_\e, p_\e)$ be a weak solution of \eqref{D-6} with $\int_{D_\e} p_\e\, dx =0$, where $h$ is given by \eqref{6-1a}-\eqref{6-1}.
Then
\begin{equation}\label{6-1-0}
\e \|\nabla u_\e\|_{L^2(D_\e)}
+ \| u_\e\|_{L^2(D_\e)}
+ \|p_\e \|_{L^2(D_\e)}
\le C \sqrt{\e},
\end{equation}
and
\begin{equation}\label{6-1-00}
\| \text{\rm div}(u_\e)\|_{L^2(D_\e)} \le C \e.
\end{equation}
\end{lemma}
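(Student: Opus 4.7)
The plan is to apply Theorem \ref{thm-N} with $D$ playing the role of $\Omega$ and $D_\e$ that of $\Omega_\e$, once the boundary data $h$ given by \eqref{6-1a}-\eqref{6-1} has been shown to satisfy the tangential-derivative structure \eqref{N-condition}. On components $\Sigma^k$ where $h=0$, Theorem \ref{thm-t} applies directly, so I focus on components where $h$ is given by \eqref{6-1}. The crucial observation is an algebraic cancellation: if one formally replaces $W^\pm(x/\e)$ by their constant averages $K^\pm$ in \eqref{6-1}, then $n\cdot h$ vanishes identically. Indeed, using the symmetry of $K^-$ one has $n_k K^-_{ki} n_i = \langle nK^-, n\rangle$, and therefore
\[
n\cdot(K^-_j - K^+_j) - (n\cdot K^-_i)(K^-_{mj}-K^+_{mj})\frac{n_i n_m}{\langle nK^-,n\rangle}
= n_k(K^-_{kj}-K^+_{kj}) - n_m(K^-_{mj}-K^+_{mj}) = 0.
\]
Consequently
\[
n\cdot h = n\cdot(W^-_j(x/\e)-K^-_j) - n\cdot(W^+_j(x/\e)-K^+_j) - n\cdot(W^-_i(x/\e)-K^-_i)(K^-_{mj}-K^+_{mj})\frac{n_i n_m}{\langle nK^-,n\rangle}.
\]

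Next, I would invoke Lemma \ref{skew-lemma} to write each oscillatory factor as
\[
n\cdot(W^\pm_j(x/\e) - K^\pm_j) \;=\; \tfrac{\e}{2}\bigl(n_\beta \partial_\alpha - n_\alpha\partial_\beta\bigr)\phi^\pm_{\alpha\beta j}(x/\e).
\]
Since $n_\beta \partial_\alpha - n_\alpha \partial_\beta$ is a tangential differential operator on $\partial D$, this puts $h\cdot n$ in the form \eqref{N-condition}, with $\phi_\e$ built from $\phi^\pm_{\alpha\beta j}(x/\e)$ and $g$ a smooth bounded function of $n$ and the constants $K^\pm$ (the latter being smooth on the $C^{2,1/2}$ surface $\partial D$). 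The relevant norms are then controlled by Lemma \ref{lemma-cell}, giving $\|h\|_{L^2(\partial D)} + \e\|\nabla_{\tan} h\|_{L^2(\partial D)} \le C$, and by Lemma \ref{skew-lemma}, giving $\|\phi_\e g\|_{L^2(\partial D)} + \|\phi_\e \nabla_{\tan} g\|_{L^2(\partial D)} \le C$. Substituting into \eqref{5.2-0} yields \eqref{6-1-0}.

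For \eqref{6-1-00}, the divergence is the constant $\gamma$, so $\|\text{\rm div}(u_\e)\|_{L^2(D_\e)} \le C|\gamma|$. Using the tangential-derivative representation of $h\cdot n$ and integrating by parts on each closed component $\Sigma^k$ of $\partial D$ (as in the bound \eqref{gamma-e}), the derivative falls onto the smooth factor $g$, leaving
\[
\Big|\int_{\partial D} h\cdot n\, d\sigma\Big| \le C\e\,\|\phi_\e \nabla_{\tan} g\|_{L^2(\partial D)} \le C\e,
\]
whence $|\gamma|\le C\e$ and \eqref{6-1-00} follows. The main obstacle I expect is verifying the algebraic cancellation cleanly while tracking the index structure and the symmetry of $K^-$; once the tangential-derivative representation of $n\cdot h$ is in hand, the estimates reduce to direct applications of the boundary-layer estimates already established in Theorems \ref{thm-t} and \ref{thm-N}.
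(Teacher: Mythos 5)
Your proposal is correct and follows essentially the same route as the paper: apply Theorem \ref{thm-N} with $\Omega=D$, observe the algebraic cancellation of the constant parts $K^\pm$ in $n\cdot h$ (using symmetry of $K^-$), invoke Lemma \ref{skew-lemma} to write the remaining oscillatory factors as $\tfrac{\e}{2}(n_\beta\partial_\alpha - n_\alpha\partial_\beta)\phi^\pm_{\alpha\beta j}(x/\e)$, and control the resulting norms by Lemmas \ref{lemma-cell} and \ref{skew-lemma}; the divergence bound then comes from integration by parts on the closed components $\Sigma^k$ exactly as in \eqref{gamma-e}. The only cosmetic difference is that you split off the $h=0$ components via Theorem \ref{thm-t}, whereas the paper handles the whole boundary data in a single application of Theorem \ref{thm-N}.
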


\begin{proof}

We apply Theorem \ref{thm-N}  with $\Omega=D$ to establish  \eqref{6-1-0}.
First, observe that  by \eqref{cell-1},
\begin{equation}
\| h \|_{L^2(\partial D)} + \e \|\nabla_{\tan} h \|_{L^2(\partial D)} \le C.
\end{equation}
Next, we compute $u\cdot n$ on $\Sigma^k$, assuming $h$ is given 
by \eqref{6-1}.
Note that
\begin{equation}\label{6-1-3}
\aligned
h\cdot n 
& = n_t W_{tj}^- (x/\e)-n_t W^+_{tj} (x/\e)
-n_t W^-_{ti} (x/\e) (K_{mj}^- -K_{mj}^+) \frac{n_i n_m}{\langle  n K^-, n \rangle}\\
&= n_t \left( W_{tj}^- (x/\e) -K_{tj}^-\right)
-n_t \left( W^+_{tj} (x/\e) -K_{tj}^+ \right)\\
&\qquad\qquad
-n_t \left( W^-_{ti}(x/\e)  - K_{ti}^-\right) (K_{mj}^- -K_{mj}^+) \frac{n_i n_m}{\langle  n K^-, n \rangle},
\endaligned
\end{equation}
where the repeated indices $t, i, m$ are summed from $1$ to $d$.
We use Lemma \ref{skew-lemma}  to write
\begin{equation}\label{6-1-5}
n_t \left(W_{ti}^\pm(x/\e) -K_{ti}^\pm \right)
=\frac{\e}{2}
\left( n_t \frac{\partial}{\partial x_s} -n_s\frac{\partial}{\partial x_t} \right) \left( \phi^\pm_{st i} (x/\e) \right).
\end{equation}
As a result, the function in the right-hand side of \eqref{6-1-3} may be written in the form
$\e (\nabla_{\tan} \phi_\e) \cdot g$ with  $(\phi_\e, g)$ satisfying 
$$
 \| \phi_\e \|_{L^2(\partial D)} + \| g\|_\infty + \|\nabla_{\tan}  g \|_\infty \le C.
$$
Consequently, the estimate \eqref{6-1-0}  follows from \eqref{5.2-0} in Theorem \ref{thm-N}.
Finally, note that \eqref{6-1-3} and \eqref{6-1-5} yield 
$$
\aligned
 \| \text{\rm div} (u_\e) \|_{L^2(D_\e)}
  & \le C \Big| \int_{\partial D} h\cdot n \, d\sigma \Big|\\
 & \le C \e.
\endaligned
$$
\end{proof}

Define
\begin{equation}\label{Phi-e-3}
\Phi^{(3)}_\e  = \sum_{\ell=1}^L I_\e^\ell (x)   (f-\nabla P_0) \chi_{\Omega_\e^\ell} \quad \text{ in } \Omega_\e,
\end{equation}
where  $I^\ell_\e =(I_{\e, 1} ^\ell , \dots, I_{\e, d}^\ell )$
   is a solution of \eqref{D-6} in $D_\e =\Omega^\ell_\e$ with $h$  given by \eqref{6-1a}-\eqref{6-1}.
To fix the boundary value $h$ for each subdomain, we assume that
 the unbounded connected component of $\R^d\setminus \overline{\Omega}$ shares boundary 
with $\Omega^1$,
and let $h=0$ on $\partial \Omega^1$. Thus, 
$I^1_\e (x) = 0$  and $\Phi_\e^{(3)} =0$ in $\Omega^1$.
Next, for each subdomain $\Omega^\ell$ that shares boundaries with $\partial\Omega^1$, we use  the boundary data \eqref{6-1} for 
the common boundary with $\partial \Omega^1$ and let $h=0$ on other components of $\partial\Omega^\ell$.
We continue this process.
More precisely, at each step, we use \eqref{6-1} on the connected component  $\Sigma^k$ 
of $\partial \Omega^\ell$ if  $\Sigma^k$ is also the connected component of  the boundary of a subdomain 
considered in the previous step, and let $h=0$ on the remaining components.
We point out that at each interface $\Sigma^k$,  the nonzero data \eqref{6-1} is used only once.
Also, $h=0$ on $\partial\Omega$.

\begin{lemma}\label{lemma-6-1}
Let $\Phi^{(3)} _\e$ be given by \eqref{Phi-e-3} with $f\in C^{1, 1/2} ({\Omega; \R^d})$.
Then $V_\e +\Phi_\e^{(3)} \in H^1(\Omega_\e; \R^d)$.
\end{lemma}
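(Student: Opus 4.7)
The plan is to verify that across every connected component $\Sigma^k$ of the interface $\Sigma$, the two one-sided traces of $V_\e+\Phi_\e^{(3)}$ coincide. Since by construction $V_\e+\Phi_\e^{(3)}\in H^1(\Omega_\e^\ell;\R^d)$ in each subdomain, agreement of traces on $\Sigma$ is precisely what is required for membership in $H^1(\Omega_\e;\R^d)$.

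Fix such a component $\Sigma^k$. By the inductive construction of $\Phi_\e^{(3)}$ above, exactly one of the two adjacent subdomains carries the nontrivial boundary value \eqref{6-1} on $\Sigma^k$; call that subdomain $\Omega^+$ (so that $W^+=W^\ell$ is the associated cell matrix and the $j$-th column of $I_\e^+$ on $\Sigma^k$ is given by \eqref{6-1}) and the other $\Omega^-$, where the corresponding interface corrector vanishes on $\Sigma^k$. Let $a^\pm := (f-\nabla P_0)|_{\Omega^\pm}$ denote the interior traces on $\Sigma^k$. The trace of $V_\e+\Phi_\e^{(3)}$ from $\Omega^-$ is simply $W^-(x/\e)\,a^-$. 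On the $\Omega^+$ side, adding $V_\e^+=W^+(x/\e)a^+$ to $(\Phi_\e^{(3)})^+_k=(I_\e^+)_{kj}\,a^+_j$ and observing that the $W^+(x/\e)$ contributions cancel leaves
\begin{equation*}
(V_\e+\Phi_\e^{(3)})^+ = W^-(x/\e)\,a^+ - \beta\,W^-(x/\e)\,n,
\qquad \beta := \frac{n\cdot(K^- - K^+)\,a^+}{\langle nK^-,n\rangle}.
\end{equation*}
Matching the two traces thus reduces to the pointwise vector identity $a^+-a^-=\beta\,n$ in $\R^d$.

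To verify this identity, the plan is to invoke the two structural facts highlighted in the introduction. First, Theorem \ref{main-thm-1} gives $P_0\in H^1(\Omega)$, so $(\nabla_{\tan}P_0)^+=(\nabla_{\tan}P_0)^-$ on $\Sigma^k$ and hence $a^+-a^-=-\lambda\,n$ for a single scalar $\lambda$ equal to the jump of the normal derivative of $P_0$. Second, the flux balance \eqref{p5}, $n\cdot K^+a^+ = n\cdot K^-a^-$, yields
\begin{equation*}
-\lambda\,\langle nK^-,n\rangle = n\cdot K^-(a^+-a^-) = n\cdot K^- a^+ - n\cdot K^+ a^+ = n\cdot(K^- - K^+)\,a^+ = \beta\,\langle nK^-,n\rangle,
\end{equation*}
so $-\lambda=\beta$ and consequently $a^+-a^- = -\lambda n = \beta n$, which is exactly what is required.

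The main obstacle will be organizational rather than analytic: keeping the $\pm$-convention in \eqref{6-1} consistent with the asymmetric inductive construction of $\Phi_\e^{(3)}$, and carefully unpacking $\Phi_\e^{(3)}=I_\e^\ell(f-\nabla P_0)$ so that the $j$-th column of $I_\e^\ell$ contracts against $f_j-\partial_j P_0$. Once the $W^+$ terms are seen to cancel, all of the analytic content reduces to the short computation above, which rests on exactly the two identities \eqref{p5} and tangential continuity of $\nabla P_0$ that were singled out as the design principles behind the boundary data \eqref{6-1}.
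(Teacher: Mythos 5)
Your proof is correct and follows essentially the same route as the paper: compute one-sided traces, cancel the $W^+$ terms, and invoke the tangential continuity of $\nabla P_0$ together with the flux balance \eqref{p5} (the paper's \eqref{interface-1}). The only difference is bookkeeping — you factor out a single scalar $\beta$ and show $a^+-a^-=\beta n$ directly, whereas the paper works with the indexed oblique projection $\delta_{ij}-n_j n_m K^-_{mi}/\langle nK^-,n\rangle$ and observes it annihilates vectors parallel to $n$; the two calculations are equivalent.
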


\begin{proof}
Let $\Psi_\e=V_\e + \Phi_\e^{(3)}$.
Since $f\in C^{1, 1/2}(\Omega) $ implies that $\nabla^2 P_0$ is bounded in each subdomain,
it follows that $\Psi_\e \in H^1(\Omega_\e^\ell; \R^d)$ for  $1\le \ell \le L$.
Thus, to show $\Psi_\e \in H^1(\Omega_\e; \R^d)$, it suffices to show that the trace of  $\Psi_\e$ is continuous 
across each interface $\Sigma^k$.

Suppose  that $\Sigma^k$ is the common boundary of subdomains $\Omega^+$ and $\Omega^-$.
Let $\Psi_\e^\pm $ denote the trace of $\Psi_\e$ on $\Sigma^k$, taken from $\Omega^\pm$ respectively.
Recall that in  the definition of $\{ I_\e^\ell \}$, the non-zero data \eqref{6-1} is used once on each interface. Assume that
the non-zero data on $\Sigma^k$ is used for $\Omega^+$. Then
$$
\Psi_\e^+ -\Psi_\e^-
=\left( W^+ (x/\e)  + I^+ _\e (x)\right) (f-\nabla P_0)^+
-W^- (x/\e) (f-\nabla P_0)^-,
$$
where $I^+_\e$ is given by \eqref{6-1}. 
It follows that
$$
\aligned
\Psi_\e^+ -\Psi_\e^-
&=\left( W_j^- (x/\e) -W_i^-(x/\e) ( K_{mj}^- -K_{mj}^+)
\frac{n_i n_m}{\langle n K^-, n \rangle} \right) \left( f_j -\frac{\partial P_0}{\partial x_j} \right)^+\\
& \qquad\qquad
-W^-_j (x/\e) \left( f_j -\frac{\partial P_0}{\partial x_j} \right)^-\\
&=W_j^- (x/\e)
\left\{
\left(\frac{\partial P_0}{\partial x_j} \right)^-
-\left( \frac{\partial P_0}{\partial x_j} \right)^+
-\frac{n_j n_m}{\langle nK^-, n\rangle}
K_{mi}^- \left( f_i -\frac{\partial P_0}{\partial x_i} \right)^+\right\}\\
& \qquad\qquad+W_j^-(x/\e) 
\frac{n_j n_m}{\langle n K^-, n \rangle} K_{mi}^- \left( f_i -\frac{\partial P_0}{ \partial x_i} \right)^- ,
\endaligned
$$
where we have used the observation that 
\begin{equation}\label{interface-1}
n_m K_{mi}^+ \left( f_i -\frac{\partial P_0}{\partial x_i} \right)^+
=n_m K_{mi}^-  \left( f_i -\frac{\partial P_0}{\partial x_i} \right)^-
\end{equation}
on the interface. Thus,
$$
\aligned
\Psi_\e^+ -\Psi_\e^-
 & =W_j^-(x/\e)
\left\{
\left(\frac{\partial P_0}{\partial x_j} \right)^-
-\left(\frac{\partial P_0}{\partial x_j}\right)^+
-\frac{n_j n_m}{\langle n K^-, n \rangle}
K_{mi}^- \left(
\left(\frac{\partial P_0}{\partial x_i} \right)^-
-\left(\frac{\partial P_0}{\partial x_i} \right)^+ \right)\right\}\\
&=W_j^-(x/\e)
\left\{
\delta_{ij} 
-\frac{n_j n_m}{\langle n K^-, n \rangle}
K_{mi}^- \right\} \left(
\left(\frac{\partial P_0}{\partial x_i} \right)^-
-\left(\frac{\partial P_0}{\partial x_i} \right)^+ \right).
\endaligned
$$
Since 
$$
n_i \left\{
\delta_{ij} 
-\frac{n_j n_m}{\langle n K^-, n \rangle}
K_{mi}^- \right\}=0
$$
and $(\nabla_{\tan} P_0)^+ = (\nabla_{\tan} P_0)^-$ on $\Sigma^k$, we obtain $\Psi_\e^+ =\Psi_\e^-$ on $\Sigma^k$.
\end{proof}

\begin{thm}\label{thm-PP3}
Let $\Phi_\e^{(3)}$ be defined by \eqref{Phi-e-3} with $f\in C^{1, 1/2} (\Omega; \R^d)$.
Then $V_\e + \Phi_\e^{(3)}\in H^1(\Omega; \R^d)$ and 
\begin{equation}
\e \|\nabla \Phi_\e^{(3)} \|_{L^2(\Omega^\ell_\e)}
+\| \text{\rm div} (\Phi_\e^{(3)} ) \|_{L^2(\Omega^\ell_\e)}
 \le C \e^{1/2} \| f \|_{C^{1, 1/2}(\Omega)}
\end{equation}
for $1\le \ell \le L$.
\end{thm}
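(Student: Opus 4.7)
The $H^1$ continuity assertion across the interface has already been secured by Lemma \ref{lemma-6-1}, which used the compatibility relation $(\nabla_{\tan} P_0)^+=(\nabla_{\tan} P_0)^-$ together with the flux-matching identity \eqref{interface-1} to cancel the jump of $W^-_j(x/\e)(f-\nabla P_0)$ against the specific boundary data \eqref{6-1}. Thus the remaining content of Theorem \ref{thm-PP3} is the norm bound, which I propose to derive as a straightforward corollary of Lemma \ref{lemma-6-0} combined with the interior/piecewise regularity estimates \eqref{P-reg} for $P_0$.

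Concretely, in each $\Omega_\e^\ell$ write
\begin{equation*}
\Phi_{\e,k}^{(3)}(x) = I_{\e,kj}^\ell(x)\,\bigl(f_j(x)-\tfrac{\partial P_0}{\partial x_j}(x)\bigr),
\end{equation*}
where the $j$-th column of the matrix $I_\e^\ell$ solves \eqref{D-6} with either trivial boundary data or the data \eqref{6-1}. Applying the product rule, I get
\begin{align*}
\nabla \Phi_{\e,k}^{(3)} &= (\nabla I_{\e,kj}^\ell)(f_j-\partial_j P_0) + I_{\e,kj}^\ell\,\nabla(f_j-\partial_j P_0),\\
\text{\rm div}(\Phi_\e^{(3)}) &= \bigl(\partial_k I_{\e,kj}^\ell\bigr)(f_j-\partial_j P_0) + I_{\e,kj}^\ell\,\partial_k(f_j-\partial_j P_0),
\end{align*}
where the first term in the divergence equals $\gamma_j(f_j-\partial_j P_0)$ for the constants $\gamma_j$ appearing in \eqref{D-6}.

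Next, I would insert the bounds from Lemma \ref{lemma-6-0}, namely
$\|I_\e^\ell\|_{L^2(\Omega_\e^\ell)}\le C\sqrt{\e}$,
$\|\nabla I_\e^\ell\|_{L^2(\Omega_\e^\ell)}\le C\e^{-1/2}$,
$\|\text{\rm div}(I_\e^\ell)\|_{L^2(\Omega_\e^\ell)}\le C\e$,
together with the bounds $\|f-\nabla P_0\|_{L^\infty(\Omega)}+\|\nabla(f-\nabla P_0)\|_{L^\infty(\Omega^\ell)}\le C\|f\|_{C^{1,1/2}(\Omega)}$ coming from \eqref{P-reg} (note that $\nabla^2 P_0$ is bounded only within each subdomain, which is exactly where we evaluate it). This yields
\begin{equation*}
\e\|\nabla \Phi_\e^{(3)}\|_{L^2(\Omega_\e^\ell)} \le C\sqrt{\e}\,\|f\|_{C^{1,1/2}(\Omega)} + C\e^{3/2}\|f\|_{C^{1,1/2}(\Omega)}
\end{equation*}
and
\begin{equation*}
\|\text{\rm div}(\Phi_\e^{(3)})\|_{L^2(\Omega_\e^\ell)} \le C\e\,\|f\|_{C^{1,1/2}(\Omega)} + C\sqrt{\e}\,\|f\|_{C^{1,1/2}(\Omega)},
\end{equation*}
both of which are $\le C\sqrt{\e}\|f\|_{C^{1,1/2}(\Omega)}$, as required.

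The genuine obstacle in this construction is not in this theorem but upstream: all the sharpness comes from Lemma \ref{lemma-6-0}, whose proof relies on the crucial algebraic observation that the normal component $h\cdot n$ of the boundary data \eqref{6-1} can be rewritten, via the skew-symmetric potentials of Lemma \ref{skew-lemma}, as a tangential derivative of an $\e$-periodic quantity, so that Theorem \ref{thm-N} can be applied to gain the $\sqrt{\e}$ factor. Once that machinery is in place, Theorem \ref{thm-PP3} is essentially a bookkeeping exercise combining the operator bounds on $I_\e^\ell$ with the regularity of $f-\nabla P_0$ in each subdomain.
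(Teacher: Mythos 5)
Your proof is correct and is essentially the same argument the paper gives: cite Lemma \ref{lemma-6-1} for the $H^1$ matching across $\Sigma$, expand $\nabla\Phi_\e^{(3)}$ and $\text{\rm div}(\Phi_\e^{(3)})$ by the product rule, and feed in the $L^2$ bounds on $I_\e^\ell$, $\nabla I_\e^\ell$, $\text{\rm div}(I_\e^\ell)$ from Lemma \ref{lemma-6-0} together with the $L^\infty$ bounds on $f-\nabla P_0$ and its gradient from \eqref{P-reg}. The only cosmetic difference is that you invoke the sharper $\|\text{\rm div}(I_\e^\ell)\|_{L^2}\le C\e$ from \eqref{6-1-00} while the paper contents itself with the $C\e^{1/2}$ bound implied by \eqref{6-1-0}; both yield the same conclusion.
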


\begin{proof} 
By Lemma \ref{lemma-6-1}, we have $V_\e+ \Phi_\e^{(3)} \in H^1(\Omega; \R^d)$.
Note that by Lemma \ref{lemma-6-0},
$$
\e \| \nabla I_\e^\ell \|_{L^2(\Omega_\e^\ell)} + \| I_\e^\ell \|_{L^2(\Omega_\e^\ell)} 
+ \| \text{\rm div} (I_\e^\ell)\|_{L^2(\Omega_\e^\ell)}
\le C \e^{1/2}
$$
for $1\le \ell \le L$.
It follows that 
$$
\aligned
\e \| \nabla \Phi_\e^{(3)} \|_{L^2(\Omega_\e^\ell)}
& \le \e \|\nabla I_\e^\ell \|_{L^2(\Omega_\e^\ell)}  \| f -\nabla P_0\|_{L^\infty(\Omega_\e^\ell)}
+  \e \| I_\e^\ell \|_{L^2(\Omega_\e^\ell)} \| \nabla (f-\nabla P_0) \|_{L^\infty(\Omega_\e^\ell)}\\
& \le
 C \e^{1/2} \| f\|_{C^{1, 1/2}(\Omega)},
\endaligned
$$
and
$$
\aligned
\|\text{\rm div} (\Phi_\e^{(3)} ) \|_{L^2(\Omega_\e^\ell)}
&\le \| \text{\rm div}(I_\e^\ell) \|_{L^2(\Omega^\ell_\e)} \| f-\nabla P_0 \|_{L^\infty(\Omega^\ell)}
+ \| I^\ell_\e \|_{L^2(\Omega_\e^\ell)} \|\nabla (f-\nabla P_0) \|_{L^\infty(\Omega_\e^\ell)}\\
&\le
 C \e^{1/2} \| f \|_{C^{1, 1/2} (\Omega)}.
\endaligned
$$
\end{proof}

%%%%%%%%%%%%%%%%%%%%%%

 \bibliographystyle{amsplain}
 
%\bibliography{Darcy-Law}
\bibliography{Darcy-two.bbl}

% \bibliography{LS-22-Two-V3.bbl}
\bigskip

\begin{flushleft}

\bigskip

Zhongwei Shen,
Department of Mathematics,
University of Kentucky,
Lexington, Kentucky 40506,
USA.

E-mail: zshen2@uky.edu
\end{flushleft}

\bigskip

\medskip

\end{document}